\documentclass[11pt]{amsart}
\usepackage[normalem]{ulem}
\usepackage{mabliautoref}
\usepackage{mathptmx, bbm, amscd, amssymb, enumerate, colonequals, mathdots, xcolor, tikz-cd,mathtools, amsthm}
\usepackage{stmaryrd}
\usepackage{color}
\definecolor{chianti}{rgb}{0.6,0,0}
\definecolor{meretale}{rgb}{0,0,.6}
\definecolor{leaf}{rgb}{0,.35,0}
\include{header.sty}
\DeclareMathOperator{\perfd}{{perfd}}
\newcommand{\bZ}{{\mathbb{Z}}}
\newcommand{\bF}{{\mathbb{F}}}

%colors for hyperref ----- Feel free to change them!
\definecolor{blush}{rgb}{0.87, 0.36, 0.51}
\definecolor{jazzberryjam}{rgb}{0.65, 0.04, 0.37}
\definecolor{tiffanyblue}{rgb}{0.04, 0.73, 0.71}
\definecolor{darkcyan}{rgb}{0.0, 0.55, 0.55}

%fabio's style setup
\hypersetup{
	bookmarksdepth=3,
	bookmarksopen,
	bookmarksnumbered,
	pdfstartview=FitH,
	colorlinks,
	linkcolor=jazzberryjam,
	anchorcolor=BurntOrange,
	citecolor=MidnightBlue,
	citecolor=tiffanyblue,
	filecolor=darkcyan,
	menucolor=Yellow,
	urlcolor=tiffanyblue
}

\def\spec{\operatorname{Spec}}

\begin{document}
\title[Plus-pure thresholds]{Bounds on the plus-pure thresholds of some hypersurfaces in (ramified) regular rings}
\begin{abstract}
    We study the plus-pure threshold ($\ppt$) of hypersurfaces in mixed characteristic.  We show that the $\ppt$ limits to the $F$-pure threshold (fpt) as we ramify the base DVR.  Additionally, we show that analogs of some positive characteristic extremal singularities cannot attain the same `extremal' $\ppt$ values in the unramified setting.  We also study equations which have controlled ramification when we adjoin their $p$-th roots as well as equations which admit $p$-th roots modulo $p^2$ (or modulo other values), bounding their $\ppt$s. In particular, given a complete unramified regular local ring of mixed characteristic $p>0$, $f^p + p^2 g$ does not define a perfectoid pure singularity for any $f$ and $g$. Finally, we compute bounds on the $\ppt$ of hypersurfaces related to elliptic curves. This gives examples where the $\ppt$ is neither the corresponding $\fpt$ in characteristic $p > 0$ nor the $\lct$ in characteristic zero.  This also provides examples where $p$ times the $\ppt$ is not a jumping number, in stark contrast with the characteristic $p > 0$ picture.
\end{abstract}

\author[Benozzo]{Marta Benozzo}
\address{Laboratoire de Mathématiques d'Orsay, 307 Rue Michel Magat, 91400 Orsay, France}
\email{marta.benozzo@universite-paris-saclay.fr}

\author[Jagathese]{Vignesh Jagathese}
\address{Department of Mathematics, Statistics, and Computer Science, University of Illinois at Chicago, Chicago, IL, USA}
\email{vjagat2@uic.edu}

\author[Pandey]{Vaibhav Pandey}
\address{Department of Mathematics, Purdue University, 150 N University St., West Lafayette, IN~47907, USA}
\email{pandey94@purdue.edu}

\author[Ramírez-Moreno]{Pedro Ramírez-Moreno}
\address{Departamento de Matemáticas, Universidad Autónoma Metropolitana, Unidad Iztapalapa, Mexico City, Mexico}
\email{pedro.ramirez@cimat.mx}

\author[Schwede]{Karl Schwede}
\address{Department of Mathematics, University of Utah, Salt Lake City, UT, USA}
\email{schwede@math.utah.edu}

\author[Sridhar]{Prashanth Sridhar}
\address{Department of Mathematics, University of Alabama, Tuscaloosa, AL, USA}
\email{psridhar1@ua.edu}

\thanks{Benozzo was supported by the European Union’s Horizon 2020 research and innovation programme under the Marie Skłodowska-Curie grant agreement No 101034255. Pandey was supported by the AMS--Simons Travel Grant ASTG-23-284908. Ramírez-Moreno was supported by SECIHTI Grants CBF-2023-2024-224 and CF-2023-G-33. Schwede was supported by NSF Grants \#2101800, \#2501903 and by the Simons Foundation SFI-MPS-TSM-00013051.}
\maketitle
\tableofcontents

\section{Introduction}

The log canonical threshold over the complex numbers $k = \CC$ and $F$-pure threshold over a field $k$ of characteristic $p > 0$, provide subtle and important invariants of hypersurface singularities for $f \in k[x_1, \dots, x_n]$ \cite{KollarSingularitiesOfPairs,LazarsfeldPositivity2,TakagiWatanabeFPureThresh,MustataTakagiWatanabeFThresholdsAndBernsteinSato}. Interpolating between those two worlds is the mixed characteristic realm, and so it is natural to explore the singularities of hypersurfaces in
\[
    f \in \bZ[x_1, \dots, x_n].
\]
As this is a local study, it is harmless to replace $\bZ$ by the $p$-adic integers $\bZ_p$ and consider $f \in \bZ_p[x_1, \dots, x_n]$, or even $f \in \bZ_p\llbracket x_1, \dots, x_n\rrbracket$.  In this ring, from the point of view of singularities, $p$ behaves like a variable.  Hence while $f = y^2 + x^3$ and $f = x^3 + y^3 + z^3$ define singularities over fields, choices of $f$ like 
\[
    f = p^2 + x^3 \;\;\; \text{ or } \;\;\; f = p^3 + x^3 + y^3
\]
yield singular hypersurfaces as well.  Building upon work and perspectives of \cite{CaiPandeQuinlanGallegoSchwedeTuckerpluspurethresholdscusplikesingularities,yoshikawa2025computationmethodperfectoidpurity,RodriguezVillalobos.BCMThresholdsOfHypersurfaces,MaSchwedeTuckerWaldronWitaszekAdjoint,MaSchwedeSingularitiesMixedCharBCM} and others, we study singularities of such hypersurfaces in mixed characteristic.  We now explain how precisely we measure these singularities.

Suppose $(R, \fm)$ is a complete regular Noetherian local ring of mixed characteristic and $0 \neq f \in \fm$.  We study the \emph{plus-pure threshold}\footnote{In our context, the plus pure threshold coincides with the \emph{BCM-threshold}  
of \cite{RodriguezVillalobos.BCMThresholdsOfHypersurfaces} with respect to the BCM-algebra $\widehat{R^+}$, and likewise essentially agrees with the \emph{BCM-regular threshold} of \cite[Examples 7.9, 7.10]{MaSchwedeTuckerWaldronWitaszekAdjoint}. It also appears as a jumping number of $+$-test ideals, see for instance \cite[Conjecture 8.4]{HaconLamarcheSchwede.GlobalGenTestIdeals}.  We believe it also coincides with a natural generalization of the \emph{perfectoid pure threshold} from   \cite{yoshikawa2025computationmethodperfectoidpurity}, see \autoref{rem.YoshikawaRemark}} of $f$ as coined in \cite{CaiPandeQuinlanGallegoSchwedeTuckerpluspurethresholdscusplikesingularities}.  Set $R^+$ to be the integral closure of $R$ in $\overline{K(R)}$, an algebraic closure of its field of fractions.  We can then define 
\[
    \ppt(f) \coloneqq \sup \{t \in \QQ_{>0} \;|\; R \xrightarrow{1 \mapsto f^t} R^+ \text{ splits}\}.
\]
Here $f^t$ makes sense in $R^+$ up to a unit, which does not affect splitting/purity\footnote{splitting and purity are equivalent here since $R$ is complete}.  As $R$ is regular, $\ppt(f)$ can also be characterized as 
\[
    \ppt(f) = \sup \{t \in \QQ_{>0}\;|\; f^t \notin \fm R^+\}.
\]
While relatively easy to define, the \emph{plus-pure threshold} seems to be very difficult to compute in mixed characteristic.  Even without resolution of singularities, one can define $\lct(f)$ based on all proper birational maps  and so it follows from \cite{BhattAbsoluteIntegralClosure,MaSchwedeSingularitiesMixedCharBCM} that 
\[
\ppt(f) \leq \lct(f) 
\]
quite generally.  At the same time, if $R = V\llbracket x_1, \dots, x_n\rrbracket$ where $(V, \varpi)$ is a mixed characteristic complete DVR, we also have 
\[
\ppt(f) \geq \fpt\big(\overline{f} \in V/(\varpi)\llbracket x_1, \dots, x_n \rrbracket\big).
\]
More precise comparisons can also be made with the $\fpt$ of the restriction of the strict transform of $V(f)$ to the exceptional divisor of a blow-up, or in other words by doing a computation on an associated graded ring (see \cite[Section 7]{MaSchwedeTuckerWaldronWitaszekAdjoint} and also compare with \cite[Section 7]{BMPSTWW1} and \cite{TakamatsuYoshikawaMMP}).

Using these observations as a starting point, in  \cite{CaiPandeQuinlanGallegoSchwedeTuckerpluspurethresholdscusplikesingularities} the authors studied $$\ppt(p^a + x^b \in \ZZ_p\llbracket x \rrbracket)$$ at least for certain values of $a$ and $b$, as well as other sporadic examples.  In \cite{yoshikawa2025computationmethodperfectoidpurity}, the author used quite different methods related to quasi-$F$-splittings to prove that certain equations like $x^3+y^3+z^3 \in \bZ_p\llbracket x,y,z \rrbracket$ define perfectoid pure singularities and hence have $\ppt = 1$, see \autoref{prop.PPTWhenHypersurfaceIsPerfdPure}.  

In this paper, building primarily on the methods of \cite{CaiPandeQuinlanGallegoSchwedeTuckerpluspurethresholdscusplikesingularities}, we study the plus-pure thresholds of various families of hypersurfaces.

Our first (relatively easy-to-prove) observation is a statement about the behavior of the $\ppt$ if one ramifies the base DVR.  

\begin{theoremA*}[{\autoref{cor.EqualityOnTheNoseAfterRamifying}, \autoref{cor.LimitingPPTStatement}}]
    Suppose $(V, \varpi)$ is a mixed characteristic $(0,p>0)$ complete DVR and $R = V\llbracket x_2, \dots, x_n\rrbracket$ has maximal ideal $\fm$.  Suppose $f \in \fm$ with corresponding $\overline{f} \in V/(\varpi)\llbracket x_2, \dots, x_n\rrbracket$.  Then 
    \[
        \lim_{e \to \infty} \ppt\big(f \in V[\varpi^{1/p^e}]\llbracket x_2, \dots, x_n \rrbracket\big) = \fpt\big(\overline{f} \in V/(\varpi)\llbracket x_2, \dots, x_n \rrbracket\big).
    \]
    Furthermore, if $\fpt(\overline{f} \in V/(\varpi)\llbracket x_2, \dots, x_n \rrbracket) = a/p^e$, then we have equality at the $e$-th stage of the limit:
    \[
         \ppt\big(f \in V[\varpi^{1/p^e}]\llbracket x_2, \dots, x_n \rrbracket\big) = a/p^e = \fpt\big(\overline{f} \in V/(\varpi)\llbracket x_2, \dots, x_n \rrbracket\big).
    \]
\end{theoremA*}

Applied to Yoshikawa's example, this immediately tells us that while $\ppt(x^3+y^3+z^3 \in \bZ_p\llbracket x,y,z\rrbracket) = 1$ for $p \equiv_3 2$, we have that 
\[
\ppt\big(x^3+y^3+z^3 \in \bZ_p[p^{1/p}]\llbracket x,y,z\rrbracket\big) = 1 - {1 \over p} = \fpt\big(x^3+y^3+z^3 \in \bF_p\llbracket x,y,z \rrbracket\big)
\]
for those same $p$, see \cite{BhattSinghThresholds,HernandezFInvariantsOfDiagonalHyp} as well as \cite{TakagiWatanabeFPureThresh,MustataTakagiWatanabeFThresholdsAndBernsteinSato}.

Using similar methods we can also compute plus-pure thresholds for certain Fermat-type hypersurfaces.  For example, let $R_a \coloneqq W(k)[p^{1/p^a}]\llbracket x_2,\dots, x_d\rrbracket$.
Let $f_a = p^{d/p^a} +x_2^d+ \dots + x_d^d \in R_a$, and let $f_0 \coloneqq x_1^d+ \dots +x_d^d \in k\llbracket x_1, \dots, x_d\rrbracket$.
    Fix $s \geq 1$ such that $p^s \leq d < p^{s+1}$, then we have that 
    \[
        \ppt(f_a) = \fpt(f_0)
    \] 
    for all $a \geq s$; see \autoref{l-homogeneoushyp_ramification} for the proof.
When $d = p^s+1$, $f_0$ is an example of an \emph{extremal singularity}.

Indeed, recently, there has been substantial interest in these so-called ``extremal hypersurface singularities'' in characteristic $p > 0$.  See \cite{KadyrsizovaKenkelPageSinghSmithVraciuWittLowerBoundsonFPureThresholdsandExtremalSingularities} as well as  \cite{Cheng.QBicThesis, Cheng.QBicForms, Cheng.QBicHypersurfaces,KadyrsizovaPageSinghSmithVraciuWitt.ClassificationOfFrobeniusForms, SmithVraciu.ValuesOfFPT}.  In general, if $f \in \bF_p[ x_1, \dots, x_n ]$ is homogeneous of degree $d$, then one always has the lower bound \cite[Theorem 3.1]{KadyrsizovaKenkelPageSinghSmithVraciuWittLowerBoundsonFPureThresholdsandExtremalSingularities} 
\[
    \fpt(f) \geq {1 \over d-1}.
\]
Furthermore, this bound can only be an equality if $d = p^e + 1$ for some integer $e > 0$ and when $f =  \sum_{i=1}^n x_i^{p^e}L_i$, for $L_i$ linear forms. Such a polynomial  $f$ with minimal $F$-pure threshold is said to have an \emph{extremal singularity} in characteristic $p$. We consider ``extremal-looking singularities" in mixed characteristic. By generalizing the arguments of \cite[Lemmas 4.2, 4.3]{CaiPandeQuinlanGallegoSchwedeTuckerpluspurethresholdscusplikesingularities}, it turns out that we \textit{cannot} obtain the analogous ``extremal'' singularities in unramified regular rings of mixed characteristic:

\begin{theoremB*}[Extremal singularities, \autoref{cor.EasyExtremalBound}]
Let $k$ be a perfect field of characteristic $p>0$ and fix $n \geq 3$.
   Let $$f = p^{p^{e} + 1} + x_2^{p^e + 1} + \dots + x_n^{p^e + 1} \; \text{ or } \; f = p^{p^{e} + 1} + x_2^{p^e}x_3 + x_3^{p^e}x_2 + x_4^{p^e + 1} + \dots + x_n^{p^e + 1} \in W(k)\llbracket x_2, \dots, x_n\rrbracket.$$ In either case, we have that the modulo $p$ reduction, $\overline{f} \in \bF_p \llbracket x_2, \dots, x_n\rrbracket$, is an extremal singularity. However,  
   \[
    \ppt(f) > \fpt(\ov{f}) = \frac{1}{p^e}.
   \]
\end{theoremB*}
For much more general results, see \autoref{thm:lowerbound}.
Note that in the context of Theorem B, if we adjoin $p^e$-th roots of $p$, we do obtain $\ppt(f) = {1 \over p^e}$ by an application of Theorem A or even replacing the term $p^{p^e+1}$ by $p^{p^e+1 \over p^{e}}$, as in \autoref{l-homogeneoushyp_ramification}.

We also explore choices of $f$ related to supersingular elliptic curves.   We studied $x^3+y^3+z^3 \in V\llbracket x,y,z\rrbracket$ above, but it is also natural to consider 
\[
    \ppt\big(p^3+x^3+y^3 \in W(k)\llbracket x,y\rrbracket\big),
\]
as suggested at the end of \cite{CaiPandeQuinlanGallegoSchwedeTuckerpluspurethresholdscusplikesingularities}.
While we have been unable to compute its $\ppt$ in general, we do obtain the following striking bounds:

\begin{theoremC*}[Elliptic curves, \autoref{thm:ppt_elliptic}]
   Let $k$ be a perfect field of characteristic $p$ with $p \equiv_3 2$. Consider $f = p^3 + x^3+y^3 \in W(k)\llbracket x,y\rrbracket$.  Then 
    \[
    \ppt(f)\in \left[1-\frac{1}{p}, 1-\frac{1}{p^2} \right]. 
    \]
    Furthermore, in characteristic $2$, we have a strict inequality on the left via Theorem B: 
    \[
        \ppt(f) \in \left(\frac{1}{2},\frac{3}{4} \right].
    \]
\end{theoremC*}
These bounds apply to other similar equations as well, see \autoref{thm:lowerbound} and \autoref{thm:ppt_elliptic} for more general statements.

In mixed characteristic $(0,p=2)$, the example above also yields an interesting observation about jumping numbers; let us begin with some background:  Given $f \in R$ in equal characteristic $p > 0$, recall that $\lambda > 0$ is an $F$-jumping number if 

$$\tau(R, f^{\lambda - \epsilon}) \neq \tau(R, f^{\lambda + \epsilon}) \; \text{ for all }\; 0<\epsilon \ll 1;$$ here $\tau$ denotes the test ideal of \cite{HaraYoshidaGeneralizationOfTightClosure}.  If $R$ is regular, the smallest jumping number is the $F$-pure threshold.   Similarly, if $R$ is complete regular local and $f \in R$, then $\ppt\big(f)$ is also the first jumping number of the BCM test ideal $\tau_{+}(f^{\lambda})$ of \cite{MaSchwedeSingularitiesMixedCharBCM} computed with respect to the perfectoid BCM-algebra $\widehat{R^+}$, see also \cite{BhattAbsoluteIntegralClosure,RodriguezVillalobos.BCMThresholdsOfHypersurfaces}.  It is then natural to ask if properties of $F$-jumping numbers also hold in mixed characteristic.  For example, in 
\cite[Lemma 3.1(1)]{BlickleMustataSmithDiscretenessAndRationalityOfFThresholds}, it is shown that if $\lambda$ is an $F$-jumping number in characteristic $p > 0$, then so is $p\lambda$ and hence so is the fractional part $\{ p \lambda \}$.

Theorem C implies that the corresponding statement is \textit{false} in mixed characteristic regular rings.

\begin{observation*}
$p \cdot \ppt(f)$, and hence $\{p \cdot \ppt(f)\}$, is not always a jumping number of the associated $+$-test ideal in mixed characteristic.
\end{observation*}
\noindent
For more discussion, see \autoref{rem.PTimesJumpingNumber}.

Next, we explore equations $f$ whose ramification is controlled when adjoining $f^{1/p}$; see \autoref{thm:intermediate} and \autoref{thm:2} for the precise and most general statements. One consequence of these results is that given a complete unramified regular local ring of mixed characteristic $p>0$, any ball of radius $1/p^2$ (in the p-adic metric) centered on a $p$-th power consists of non-perfectoid pure forms, see \autoref{prop.PPTWhenHypersurfaceIsPerfdPure} and \autoref{rem:URLR}. 
Explicitly, for common base rings, it looks like:

\begin{theoremD*}[{\autoref{thm:intermediate}, \autoref{thm:2}}]
Let $\zeta$ denote a primitive $p$-th root of unity. 
\begin{enumerate}
    \item For any $f\in \ZZ_p[\zeta]\llbracket x_2,\dots,x_d\rrbracket$ admitting a $p$-th root modulo $(\zeta-1)^p$, we have $\ppt(f)\leq 1/p$.

    \item For any $f\in \ZZ_p\llbracket x_2,\dots,x_d\rrbracket$ admitting a $p$-th root modulo $p^2$, we have $\ppt(f)\leq 1-1/p$.
\end{enumerate}
   
\end{theoremD*}

In the setting of Theorem D (a), if $f$ admits a linear $p$-th root modulo $(\zeta-1)^p$ (for example, $p=2$ and $f=(x_2+\dots+x_d)^2 + 4a$ for some $a\in R$), then $\ppt(f)=1/p$ $(=1/2$). This follows by combining the upper bound of Theorem D (a) with the lower bound coming from the mod $p$ reduction.

Finally, we study some hypersurface singularities whose mod $p$ reduction is not reduced. In \cite[Proposition 4.6]{CaiPandeQuinlanGallegoSchwedeTuckerpluspurethresholdscusplikesingularities}, the authors showed that $$\ppt(x^2+2^2 \in \ZZ_2 \llbracket x\rrbracket)=\fpt(x^2+y^2 \in \FF_2\llbracket x,y\rrbracket)=1/2.$$
The following result shows that the analogous statement does \textit{not} hold for any power of an odd prime and in any dimension. In particular, this partially answers \cite[Question 5.1]{CaiPandeQuinlanGallegoSchwedeTuckerpluspurethresholdscusplikesingularities}. 

\begin{theoremE*}[\autoref{TuckersBane}]
    Let $k$ be a perfect field of characteristic $p>2$. Let $f = p^{p^e}+ x_2^{p^e} + \dots + x_n^{p^e} \in W(k)\llbracket x_2, \dots, x_n\rrbracket $ and $f_0=x_1^{p^e} + \dots + x_n^{p^e} \in k\llbracket x_1, \ldots, x_n\rrbracket $. Then $$\ppt(f) > \fpt(f_0) = \frac{1}{p^e}.$$ 
\end{theoremE*}

\subsection*{Acknowledgements}

The authors began this project at the Fields Institute in Toronto as part of the Apprenticeship Program in Commutative Algebra in January 2025.  We appreciate the Fields Institute's support.
The authors thank Anna Brosowsky, Hanlin Cai, Linquan Ma, Eamon Quinlan-Gallego, Kevin Tucker, and Shou Yoshikawa for valuable conversations.  We also thank Linquan Ma and the anonymous referee for comments on previous drafts.

\section{Preliminaries}

Throughout, if $R$ is an integral domain, then $R^+$ is an absolute integral closure. By $p$, we will always denote a positive prime integer.  If we write $a \equiv_p b$ then we mean that $a$ and $b$ are equivalent modulo $p$.  More generally, for any ideal $I$, we write $a \equiv_I b$ when $a + I = b+I$.  

\begin{definition}
    Suppose $(R, \fm)$ is a complete Noetherian local domain of mixed characteristic $(0, p> 0)$.  Suppose $0 \neq f \in \fm$.  We define the \emph{plus-pure threshold} to be
    \[
        \ppt(f \in R) \coloneqq \sup \{t \in \QQ_{>0} \;|\; R \xrightarrow{1 \mapsto f^t}\widehat{R^+} \text{ is pure}\}.
    \]
    Note that $f^t$ is only defined up to units in $\widehat{R^+}$, but a unit will not change whether the map $R \xrightarrow{1 \mapsto f^t}\widehat{R^+}$ is pure.
    When $R$ is clear from the context, we simply write $\ppt(f)$.
\end{definition}

\begin{remark}
\label{rem.YoshikawaRemark}
    The notation $\ppt(-)$ was used in \cite{yoshikawa2025computationmethodperfectoidpurity} in the special case $f = p$ for the related notion of the \emph{perfectoid pure threshold}.  Based on the equal characteristic $p > 0$ picture, we expect the perfectoid pure threshold to agree with the plus-pure threshold, at least in a regular ambient ring, the context of this paper, see also \cite[Remark 2.3]{CaiPandeQuinlanGallegoSchwedeTuckerpluspurethresholdscusplikesingularities}.  Because of this, we do not anticipate confusion.
\end{remark}

If $R$ is regular, then we have the alternate description:
\begin{lemma}[{\cite[Definition 2.1]{CaiPandeQuinlanGallegoSchwedeTuckerpluspurethresholdscusplikesingularities}}]
\label{lem.AlternateDescriptionPPT}
With notation as above:
    \[
        \begin{array}{rl}
            \ppt(f) = & \sup\{t \in \QQ_{>0}\;|\; f^t \notin \fm \widehat{R^+} \}\\
                = & \inf\{t \in \QQ_{>0}\;|\; f^t \in \fm {R^+} \}.
        \end{array}
    \]
\end{lemma}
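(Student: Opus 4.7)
The plan is to prove the two equalities by characterizing purity of $R \xrightarrow{1 \mapsto f^t} \widehat{R^+}$ via local cohomology and then invoking the big Cohen--Macaulay property of $\widehat{R^+}$. I will first establish the intermediate statement
\[
R \xrightarrow{1 \mapsto f^t} \widehat{R^+} \text{ is pure} \iff f^t \notin \fm \widehat{R^+},
\]
and then bootstrap from $\widehat{R^+}$ back to $R^+$.

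For the intermediate statement, first I would use that since $(R,\fm)$ is complete local, an $R$-module map $R \xrightarrow{\phi} M$ is pure if and only if the induced map $E_R \to E_R \otimes_R M$ is injective, where $E_R$ denotes the injective hull of the residue field. Since $R$ is regular of dimension $n$, $E_R \cong H^n_\fm(R) = \dirlim_s R/(x_1^s,\dots,x_n^s)$ for any regular system of parameters $x_1,\dots,x_n$, with socle generated by the limiting class of $1$. Tensoring $1 \mapsto f^t$ with $E_R$ realizes the map $H^n_\fm(R) \to H^n_\fm(\widehat{R^+})$ given by multiplication by $f^t$, which sends the socle generator to the class $[f^t] \in H^n_\fm(\widehat{R^+})$. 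Because $E_R$ is essential over its socle, purity is equivalent to $[f^t] \neq 0$. Now $[f^t] = 0$ means $f^t(x_1\cdots x_n)^{s-1} \in (x_1^s,\dots,x_n^s)\widehat{R^+}$ for some $s \geq 1$; by Andr\'e's theorem $\widehat{R^+}$ is a big Cohen--Macaulay $R$-algebra, so $x_1,\dots,x_n$ is a regular sequence on $\widehat{R^+}$ and colon-capturing yields
\[
(x_1^s,\dots,x_n^s)\widehat{R^+} : (x_1\cdots x_n)^{s-1} \;=\; (x_1,\dots,x_n)\widehat{R^+} \;=\; \fm \widehat{R^+}.
\]
Hence $[f^t] = 0 \iff f^t \in \fm \widehat{R^+}$, establishing the intermediate statement and therefore the first equality in the lemma.

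For the second equality, I would use the standard completion fact $\widehat{R^+}/\fm \widehat{R^+} \cong R^+/\fm R^+$, which implies $f^t \in \fm R^+ \iff f^t \in \fm \widehat{R^+}$ for any $f^t \in R^+$. Set $A := \{t \in \QQ_{>0} : f^t \notin \fm R^+\}$. Then $A$ is downward closed: if $t_1 < t_2$ both in $\QQ_{>0}$ and $f^{t_1} \in \fm R^+$, then $f^{t_2} = f^{t_1} f^{t_2-t_1} \in \fm R^+$. Therefore $\sup A = \inf(\QQ_{>0} \setminus A)$, which is exactly the desired equality.

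The main obstacle is ensuring that the Matlis-duality/colon-capturing translation is applicable to the non-Noetherian module $\widehat{R^+}$; this rests squarely on the fact that $\widehat{R^+}$ is a big Cohen--Macaulay $R$-algebra, by Andr\'e--Bhatt, which is the only deep input. The minor ambiguity that $f^t$ is defined only up to a root of unity in $R^+$ is harmless, since both purity of the associated $R$-linear map and membership in the ideal $\fm R^+$ are preserved under multiplication by units.
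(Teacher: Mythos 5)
Your proof is correct, and it uses what is essentially the standard argument for statements of this type. The paper itself does not prove this lemma --- it is quoted directly from \cite[Definition 2.1]{CaiPandeQuinlanGallegoSchwedeTuckerpluspurethresholdscusplikesingularities} --- but the route you take (Matlis duality over a complete local ring to reduce purity to injectivity after tensoring with $E_R$, realizing $E_R$ as top local cohomology via a regular system of parameters, and then using that $x_1, \dots, x_n$ is a regular sequence on $\widehat{R^+}$ by Andr\'e--Bhatt to convert vanishing of $[f^t]$ into the ideal membership $f^t \in \fm\widehat{R^+}$ via colon capturing) is exactly the mechanism underlying the cited source.

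Two small points worth making explicit. For the passage from $\widehat{R^+}$ to $R^+$, you rely on $\widehat{R^+}/\fm\widehat{R^+} \cong R^+/\fm R^+$: this holds because $\widehat{R^+}$ is the $p$-adic completion of the $p$-torsion-free domain $R^+$, so $\widehat{R^+}/p^n\widehat{R^+} \cong R^+/p^n R^+$ for all $n$, and since $p \in \fm$ one can quotient further by $\fm$. It would be worth saying why $R^+$ being torsion-free matters, since the identity $\widehat{M}/p\widehat{M} \cong M/pM$ can fail for general modules $M$. Second, the observation that $\{t : f^t \notin \fm R^+\}$ is downward closed and hence its supremum equals the infimum of its complement is correct, but it does tacitly use that $\QQ_{>0}$ is split cleanly by this property; you handle this correctly, and the implicit nonemptiness of the set (so the supremum is genuinely positive) is a separate issue not asserted by the lemma.
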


One should compare this with the definition of the $F$-pure threshold.
\begin{definition}
\label{def.fpt}
    Suppose that $(R, \fm)$ is a complete regular local ring of positive characteristic $p > 0$ and $f \in \fm$.  Then $\fpt(f \in R) = \sup\{ {\nu \over p^e} \;|\; f^\nu \notin \fm^{[p^e]} \} = \sup\{t \in \QQ_{>0}\;|\; f^t \not\in \fm R^+ \}.$
    
    When $R$ is clear from the context, we simply write $\fpt(f)$.
\end{definition}
The second equality in the definition above is well known. For a generalization, see for instance \cite[Proposition 2.0.4]{RodriguezVillalobos.BCMThresholdsOfHypersurfaces}. Though the equality is stated for $F$-finite strongly $F$-regular rings, the proof generalizes to the complete regular local setting above as such rings are still strongly $F$-regular in the classical sense. 

\begin{lemma}
    \label{lem.EtaleExtensionPPT}
    Suppose $(R,\fm) \subseteq (S,\fn)$ is a finite  extension of complete regular local rings of mixed characteristic $(0, p> 0)$ such that $\fm S = \fn$ (for instance if the extension is \'etale).  Suppose $f \in \fm$.  Then 
    \[
        \ppt(f \in R) = \ppt(f \in S).
    \]
\end{lemma}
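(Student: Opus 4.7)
The plan is to reduce to the alternate characterization of the plus-pure threshold given in \autoref{lem.AlternateDescriptionPPT}, namely
\[
\ppt(f) \;=\; \inf\{t \in \QQ_{>0} \;|\; f^t \in \fm R^+\},
\]
and analogously for $S$. With this characterization in hand, the statement reduces to comparing the two ideals $\fm R^+$ and $\fn S^+$ inside a common ring.

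The first step is to observe that $R^+ = S^+$. Indeed, since $S$ is complete regular local it is a domain, and the finite extension $R \subseteq S$ is integral, so $S$ embeds in $R^+$ once we fix an algebraic closure. The finite extension of fraction fields $K(R) \subseteq K(S)$ gives $\overline{K(R)} = \overline{K(S)}$, and by transitivity of integral closure, the integral closure of $R$ in this algebraic closure coincides with the integral closure of $S$. Thus $R^+ = S^+$; call this common ring $T$.

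The second step uses the hypothesis $\fm S = \fn$ to compute, inside $T$,
\[
\fm T \;=\; (\fm S)\cdot T \;=\; \fn T.
\]
Applying \autoref{lem.AlternateDescriptionPPT} to both $R$ and $S$, the infima defining $\ppt(f \in R)$ and $\ppt(f \in S)$ are taken over identical conditions $f^t \in \fm T = \fn T$, and hence agree.

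I do not anticipate any real obstacle: the hypothesis $\fm S = \fn$ is tailored precisely to eliminate the only possible discrepancy between $\fm T$ and $\fn T$, and the identification $R^+ = S^+$ is a standard consequence of integrality together with transitivity of integral closure. The only mild point worth verifying cleanly is that we may indeed work with $R^+$ rather than its completion $\widehat{R^+}$, but this is exactly the content of \autoref{lem.AlternateDescriptionPPT}.
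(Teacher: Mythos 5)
Your proposal is correct and follows the same route as the paper's proof: identify $R^+ = S^+$ via the finite (hence integral) extension, note $\fm T = \fn T$ from the hypothesis $\fm S = \fn$, and conclude from \autoref{lem.AlternateDescriptionPPT}. You have simply spelled out the intermediate steps that the paper leaves implicit.
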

\begin{proof}
    As $R \subseteq S$ is finite, we see that $R^+ = S^+$. The result follows from \autoref{lem.AlternateDescriptionPPT}.  
\end{proof}

We note the following comparison between the $\fpt$ and $\ppt$ which is implicit in \cite{CaiPandeQuinlanGallegoSchwedeTuckerpluspurethresholdscusplikesingularities}.
\begin{lemma}
\label{lem.pptVsFPTEasyComparison}
    Suppose that $(R, \fm)$ is a complete regular local ring of mixed characteristic $(0, p>0)$ and $0 \neq \varpi \in \fm$ is such that $R/(\varpi)$ is regular of characteristic $p > 0$ (and hence $\varpi | p$). Fix $f \in \fm$ with corresponding $\overline{f} \in R/(\varpi)$.  Then 
    \[
        \ppt(f) \geq \fpt(\overline{f}).
    \]
\end{lemma}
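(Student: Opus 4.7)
The plan is to argue by contradiction using the sup-characterization $\ppt(f) = \sup\{t : f^t \notin \fm \widehat{R^+}\}$ from \autoref{lem.AlternateDescriptionPPT}, together with the analogous description $\fpt(\overline{f}) = \sup\{\nu/p^e : \overline{f}^\nu \notin \overline{\fm}^{[p^e]}\}$. It suffices to show that whenever $t = \nu/p^e$ satisfies $\overline{f}^\nu \notin \overline{\fm}^{[p^e]}$ in $R/(\varpi)$, one has $f^{\nu/p^e} \notin \fm \widehat{R^+}$.

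Suppose otherwise. Extending $\varpi$ to a regular system of parameters $\varpi, x_2, \ldots, x_n$ for $R$, we can write
\[
  f^{\nu/p^e} \;=\; g_1 \varpi + g_2 x_2 + \cdots + g_n x_n \qquad \text{for some } g_i \in \widehat{R^+}.
\]
The key observation is that $\widehat{R^+}/(\varpi)$ has characteristic $p$ (since $p \in (\varpi)$), so the $p^e$-th power map is additive there. Raising both sides to the $p^e$-th power, reducing modulo $\varpi$, and using $\varpi^{p^e} \equiv 0$ yields
\[
  \overline{f}^\nu \;=\; \overline{g_2}^{\,p^e}\, x_2^{p^e} + \cdots + \overline{g_n}^{\,p^e}\, x_n^{p^e} \;\in\; \overline{\fm}^{[p^e]} \cdot \bigl(\widehat{R^+}/(\varpi)\bigr),
\]
where $\overline{\fm} = (x_2, \ldots, x_n)$ denotes the maximal ideal of $R/(\varpi)$.

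To convert this into a statement inside $(R/(\varpi))^+$, I first note that the $p$-adic and $\varpi$-adic topologies on $R^+$ agree (since $\varpi\mid p$ and some power of $\varpi$ is an $R$-unit multiple of $p$), which gives $\widehat{R^+}/(\varpi) = R^+/(\varpi R^+)$. Selecting a minimal prime $\mathfrak{q}$ of $R^+/(\varpi R^+)$ lying over $(0) \subset R/(\varpi)$, the quotient $R^+/(\varpi R^+)/\mathfrak{q}$ is a domain, integral over $R/(\varpi)$, and absolutely integrally closed in its fraction field --- hence (isomorphic to) $(R/(\varpi))^+$. Pushing the previous containment through this surjection produces $\overline{f}^\nu \in \overline{\fm}^{[p^e]} (R/(\varpi))^+$. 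Because $R/(\varpi)$ is a complete regular local ring of characteristic $p>0$, the theorem of Hochster--Huneke guarantees that $R/(\varpi) \hookrightarrow (R/(\varpi))^+$ is pure, so $\overline{\fm}^{[p^e]} (R/(\varpi))^+ \cap R/(\varpi) = \overline{\fm}^{[p^e]}$. Therefore $\overline{f}^\nu \in \overline{\fm}^{[p^e]}$, contradicting our choice of $t$.

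The main obstacle is the passage from the mixed-characteristic side ($\widehat{R^+}$) down to the characteristic-$p$ side $((R/(\varpi))^+)$: one has to justify that reducing modulo $\varpi$ interacts well with both the $p$-adic completion and with taking the absolute integral closure, which is precisely why one selects a minimal prime of $R^+/(\varpi R^+)$ over $(0)$. Beyond that identification, the proof is purely Frobenius bookkeeping in characteristic $p$ followed by an invocation of Hochster--Huneke purity.
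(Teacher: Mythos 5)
Your argument is correct, and it rests on the same fundamental step as the paper's proof: reduce modulo $\varpi$ and push forward along a map $R^+/(\varpi R^+) \to (R/(\varpi))^+$. Where they differ is in how the two thresholds are represented. The paper works directly with $R^+$ (not $\widehat{R^+}$), using the $\inf$-characterization from \autoref{lem.AlternateDescriptionPPT}, and closes instantly with the description $\fpt(\overline{f}) = \sup\{t \;:\; \overline{f}^t \notin \overline{\fm}\,(R/(\varpi))^+\}$; as a result there is no Frobenius computation and no appeal to purity at all. You instead start from the Frobenius-power description $\overline{f}^\nu \notin \overline{\fm}^{[p^e]}$ and work with $\widehat{R^+}$, which costs you three extra steps: (i) the identification $\widehat{R^+}/(\varpi) \cong R^+/(\varpi R^+)$ --- this is in fact fine, because the $p$-adic and $\varpi$-adic topologies coincide and $(\varpi)$ is finitely generated, so the $\varpi$-adic completion satisfies $\widehat{A}/\varpi\widehat{A} = A/\varpi A$, though your ``topologies agree'' remark on its own doesn't quite spell this out; (ii) the explicit Frobenius expansion to land in $\overline{\fm}^{[p^e]}$; and (iii) Hochster--Huneke purity of $R/(\varpi) \hookrightarrow (R/(\varpi))^+$ to descend back to $R/(\varpi)$. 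None of these is wrong, so your proof goes through, but all three could be skipped by reading $\ppt$ off of $R^+$ and $\fpt$ off of $(R/(\varpi))^+$, as the paper does.
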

\begin{proof}
    Suppose $f^t \in \fm R^+$.  Then $\overline{f^t} \in \fm (R^+/(\varpi))$. Given a surjection $R \onto R/\varpi$ one obtains a map $R^+ \to (R/\varpi)^+$, see for instance \cite[Proposition 1.2]{HochsterHunekeApplicationsofBigCM}, which certainly sends $\varpi \mapsto 0$ and so factors through $R^+/(\varpi)$.
    Therefore $\overline{f^t}$ maps to some choice of ${\overline{f}}^t$ since $(R/(\varpi))^+$ is an integral domain. We get that ${\overline{f}}^t \in \fm (R/(\varpi))^+$ and the result follows.
\end{proof}

We will repeatedly use the following lemma.
\begin{lemma}[ {\cite[Lemma 2.2]{CaiPandeQuinlanGallegoSchwedeTuckerpluspurethresholdscusplikesingularities}}] \label{l-lemma2.3}
Let $S$ be an absolutely integrally closed domain, let $z, y, y_1,\dots,y_s \in S$ be elements, let $e \geq 1$ be an integer and $\epsilon \in \QQ$ be a rational number.  Finally suppose that $\varpi\in S$ divides a prime $p > 0$.
\begin{enumerate}
    \item[(i)] If $\epsilon \in \left( 0, \frac{p}{p-1}\right]$, we have \label{l-lemma2.3.i}
    \[
    z \in (\varpi^{\epsilon}, y) \iff z^{1/p^e} \in (\varpi^{\epsilon/p^e}, y^{1/p^e}).
    \]
    \item[(ii)] If $\epsilon \in \left( 0, 1 \right]$, we have \label{l-lemma2.3.ii}
    \[
    z \in (\varpi^{\epsilon}, y_1, \dots, y_s) \iff z^{1/p^e} \in (\varpi^{\epsilon/p^e}, y_1^{1/p^e}, \dots, y_s^{1/p^e}).
    \]
\end{enumerate}
\end{lemma}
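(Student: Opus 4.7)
The plan is to prove both implications of the biconditional. Two features of the hypotheses drive everything: $S$ is absolutely integrally closed, so every element has a $p^e$-th root in $S$; and $\varpi \mid p$, so $p = \varpi u$ for some $u \in S$, which lets binomial coefficients divisible by $p$ absorb into powers of $\varpi$. Denote by $I$ the ideal on the $p^e$-th-root side of the equivalence and by $J$ the ideal on the other side.

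For the direction $z^{1/p^e} \in I \Rightarrow z \in J$, I would write $z^{1/p^e}$ as an explicit combination of the generators of $I$ and expand $z = (z^{1/p^e})^{p^e}$ via the multinomial theorem. The $s+1$ extremal monomials produce $\varpi^{\epsilon}$ and the $y_i$ times units, which are in $J$. Every non-extremal multinomial coefficient is divisible by $p = \varpi u$, so together with the $\varpi^{k_0 \epsilon/p^e}$ factor carried by the corresponding term, the total $\varpi$-exponent is at least $1 + k_0 \epsilon/p^e$. This is $\geq \epsilon$ whenever $\epsilon \leq p^e/(p^e - k_0)$; the worst case $k_0 = 0$ enforces $\epsilon \leq 1$ and handles (ii). For (i), with $s = 1$, the case $k_0 = 0$ reduces to the extremal $y$-term, so the actual binding case needs the refined divisibility $v_p(\binom{p^e}{k}) = e - v_p(k)$ applied at $k = p^{e-1}$, yielding the sharp bound $\epsilon \leq p/(p-1)$.

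For the direction $z \in J \Rightarrow z^{1/p^e} \in I$, I would write $z = a_0 \varpi^{\epsilon} + \sum_i a_i y_i$, extract $p^e$-th roots of each coefficient in $S$, and form the approximation $w_0 = a_0^{1/p^e} \varpi^{\epsilon/p^e} + \sum_i a_i^{1/p^e} y_i^{1/p^e} \in I$. Applying the previous direction's cross-term analysis to $w_0$ itself gives $w_0^{p^e} = z + \varpi \cdot E_0$ with $E_0 \in I$. I would then iteratively correct by choosing $\alpha_{n+1}$ to be a suitable $p^e$-th root of the current error, which lies in $I$ because $\varpi^{1/p^e} \in (\varpi^{\epsilon/p^e}) \subseteq I$ under the stated $\epsilon$ bound, producing a sequence $w_n \in I$ with $w_n^{p^e} - z$ pushed progressively deeper into $\varpi I^n$. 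The main obstacle is showing that this sequence converges to an actual $\xi \in I$ satisfying $\xi^{p^e} = z$: this will either follow from a Hensel-type single-step argument inside $S$, or require passage to the $\varpi$-adic completion of $S$ followed by descent, the viability of which rests on separation or completeness properties that absolute integral closure of $S$ will supply in the contexts where the lemma is applied.
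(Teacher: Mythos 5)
Your analysis of the direction $z^{1/p^e} \in (\varpi^{\epsilon/p^e}, y_1^{1/p^e}, \dots) \Rightarrow z \in (\varpi^{\epsilon}, y_1, \dots)$ via multinomial expansion is essentially right, including the identification of $k_0 = p^{e-1}$ (via $v_p\binom{p^e}{k} = e - v_p(k)$) as the binding case that produces the threshold $\epsilon \leq \tfrac{p}{p-1}$ in part (i). That half is fine.

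The gap is in the other direction, and it is a genuine one, not just a cosmetic omission. Your plan to build a sequence $w_n \in I$ with $w_n^{p^e} - z$ pushed into smaller and smaller ideals is a Hensel/Newton iteration, and for it to produce an actual $\xi \in I$ with $\xi^{p^e} = z$ you need convergence, hence some form of $\varpi$-adic completeness or at least separatedness. An absolutely integrally closed domain has neither of these in general: it is typically non-Noetherian and the $\varpi$-adic topology can be badly behaved; passing to the completion destroys both the domain hypothesis and absolute integral closedness, and there is no descent mechanism back to $S$. So the step you flag as the "main obstacle" is not going to be supplied by the hypotheses; the iteration cannot be closed.

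What absolute integral closure actually buys you is much simpler and makes the whole iteration unnecessary. Set $\theta := \sum_i a_i^{1/p^e} y_i^{1/p^e}$ (so $\theta$ is one term, $a_1^{1/p^e}y^{1/p^e}$, in case (i)), and let $u := z^{1/p^e} - \theta$. Applying the freshman's dream $(a-b)^{p^e} \equiv a^{p^e} - b^{p^e} \pmod{p}$ once to compute $\theta^{p^e} \equiv \sum_i a_i y_i$ and once more to $u^{p^e} \equiv z - \theta^{p^e}$ gives $u^{p^e} \in (a_0\varpi^{\epsilon}, p) \subseteq (\varpi^{\epsilon}, \varpi)$. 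For $\epsilon \leq 1$ this is $(\varpi^{\epsilon})$, so $u^{p^e} = c\,\varpi^{\epsilon}$ for some $c \in S$. Now here is the one-step replacement for your iteration: the element $u/\varpi^{\epsilon/p^e}$ lies in $\operatorname{Frac}(S)$ and is a root of $T^{p^e} - c\mu$ for a suitable $p^e$-th root of unity $\mu \in S$, hence is integral over $S$; since $S$ is (absolutely) integrally closed in its fraction field, $u/\varpi^{\epsilon/p^e} \in S$, i.e. $u \in (\varpi^{\epsilon/p^e})$. Then $z^{1/p^e} = u + \theta \in (\varpi^{\epsilon/p^e}, y_1^{1/p^e}, \dots, y_s^{1/p^e})$, and no limit is ever taken. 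This handles (ii) and the range $\epsilon \leq 1$ of (i) outright; for $1 < \epsilon \leq \tfrac{p}{p-1}$ in (i) the crude bound $(\varpi^{\epsilon},\varpi) = (\varpi)$ is too weak and one must keep track of the extra $\theta^{p^e-j}$ factors in the error (or argue by a finer decomposition), but the governing mechanism is still this single extraction of a $p^e$-th root inside $S$, not a convergent iteration.

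In short: the multinomial direction is correct; the root-taking direction should be reorganized around the fact that absolute integral closure lets you divide a $p^e$-th root by a fractional power of $\varpi$ and stay in $S$, which collapses your infinite correction scheme to a single step and removes the need for any completeness assumption.
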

\begin{proof}
    This was shown in \cite{CaiPandeQuinlanGallegoSchwedeTuckerpluspurethresholdscusplikesingularities} in the case that $p = \varpi$.  The proof works verbatim the same as soon as one notices that $\varpi^{\epsilon} | p^{\epsilon}$  and so we do not reproduce it here.
\end{proof}

\subsection{Plus-pure threshold vs perfectoid pure hypersurfaces}

The following result is well known to experts but we do not know a reference.  In the proof we use the following terminology.  Suppose $B$ is a perfectoid $A$-algebra and $I \subseteq A$ is n ideal.   We recall that the perfectoidization $(IB)_{\perfd}$ of $IB$ is precisely the kernel of $B \to (B/IB)_{\perfd}$ where this $(-)_{\perfd}$ denotes perfectoidization of a $B$-algebra, see \cite{BhattScholzepPrismaticCohomology} and \cite[Section 2.4]{CaiLeeMaSchwedeTucker}.   We observe that if $I = (f)$ and if $B$ has a compatible system of $p$th power roots of $f$, then $(f B)_{\perfd} = (f^{1/p^{\infty}})^{-}$ where by ${ }^{-}$ we mean $p$-adic closure, see \cite[Lemma 2.4.3]{CaiLeeMaSchwedeTucker}.  Finally, note that if $f = g^N$ for some integer $N$, then $(fB)_{\perfd} = (g B)_{\perfd}$ as perfectoid rings are reduced and so $g$ already maps to zero in $B/(fB)_{\perfd}$.

\begin{proposition}
\label{prop.PPTWhenHypersurfaceIsPerfdPure}
    Suppose $A = W(k)\llbracket x_2, \dots, x_n\rrbracket$ with $k$ perfect and let $\fm$ be the maximal ideal with some $0 \neq f \in \fm$.  Then $A/(f)$ is perfectoid pure if and only if $\ppt(f) = 1$.
\end{proposition}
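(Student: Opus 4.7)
Since $f \in \fm$, we have $f \in \fm \widehat{A^+}$, so $\ppt(f) \leq 1$ by \autoref{lem.AlternateDescriptionPPT}; the content of the proposition is thus to show that $R := A/(f)$ is perfectoid pure if and only if $\ppt(f) \geq 1$. The plan is to work with a canonical perfectoid $R$-algebra $B := \widehat{A^+}/\overline{(f^{1/p^\infty})}$ (with $p$-adic closure, where $(f^{1/p^\infty})$ is generated by a compatible system of $p$-power roots of $f$ in $A^+$); $B$ is perfectoid by the standard theory of perfectoid quotients, and since $f \mapsto 0$ in $B$, we get a natural map $R \to B$. I claim that $\ppt(f) = 1$, purity of $R \to B$, and perfectoid purity of $R$ are all equivalent.

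The implication ``$R \to B$ pure $\Rightarrow R$ perfectoid pure'' is immediate since $B$ is perfectoid. For the converse, given any pure $R \to S$ with $S$ perfectoid, one passes to $\widehat{S^+}$ (still perfectoid) and uses that both $B$ and $\widehat{S^+}$ arise naturally as perfectoid quotients of $\widehat{A^+}$ annihilating $f^{1/p^\infty}$. This produces a canonical map $B \to \widehat{S^+}$ through which $R \to S \to \widehat{S^+}$ factors, and purity of $R \to B$ can then be extracted by combining the universality of $\widehat{A^+}$ as a perfectoid cover with the perfectoidization formalism.

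For the core equivalence with $\ppt(f) = 1$: since $R$ is a complete Gorenstein local ring of dimension $n-1$ (complete intersection in $A$), purity of $R \to B$ is equivalent to the nonvanishing of the socle class $[1/(p x_2 \cdots x_n)] \in (0:_{H^n_\fm(A)} f) \cong H^{n-1}_\fm(R)$ upon mapping to $H^{n-1}_\fm(B)$. Unpacking this via the \v{C}ech complex on $(p, x_2, \ldots, x_n)$ reduces to checking that $f^{p^e - 1} \notin \fm^{[p^e]}\widehat{A^+} + \overline{(f^{1/p^\infty})}\widehat{A^+}$ for all $e \geq 1$; a valuation-theoretic analysis on $\widehat{A^+}$ (the main technical step) absorbs the $\overline{(f^{1/p^\infty})}$ term and shows this is equivalent to $f^{p^e-1} \notin \fm^{[p^e]}\widehat{A^+}$ for all $e$. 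Applying \autoref{l-lemma2.3} to pass from $p^e$-th powers to $p^e$-th roots converts the last condition into $f^{(p^e-1)/p^e} \notin \fm\widehat{A^+}$ for all $e$, which by \autoref{lem.AlternateDescriptionPPT} is exactly $\ppt(f) \geq 1$.

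The main obstacle is the absorption argument: one must show that the extra ideal $\overline{(f^{1/p^\infty})}$ does not create genuinely new ways of expressing $f^{p^e-1}$ modulo $\fm^{[p^e]}\widehat{A^+}$, which requires controlling the interaction between high-order $p$-power roots of $f$ and the Frobenius powers of $\fm$ inside $\widehat{A^+}$. A secondary challenge is the universality step for the direction ``perfectoid pure $\Rightarrow R \to B$ pure'': identifying $B$ canonically among perfectoid targets and verifying that purity transfers through the resulting factorization requires compatible choices of $p$-power roots on both sides and some care with completion.
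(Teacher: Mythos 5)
Your proposal has two genuine gaps, one of which is fatal as stated.

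\textbf{The reduction to a socle criterion is vacuous.} You claim that purity of $R \to B = \widehat{A^+}/\overline{(f^{1/p^\infty})}$ reduces to checking that $f^{p^e - 1} \notin \fm^{[p^e]}\widehat{A^+} + \overline{(f^{1/p^\infty})}\widehat{A^+}$ for all $e \geq 1$. But $f^{p^e-1} \in \overline{(f^{1/p^\infty})}\widehat{A^+}$ automatically for every $e \geq 1$: the ideal $(f^{1/p^\infty})$ contains $f^{1/p^m}$ for all $m$, hence contains $f$ and all its positive integer powers. So your condition is \emph{always false}, and the proposed ``absorption argument'' cannot fix this --- you would need to absorb a term that unconditionally kills the criterion, not control some borderline contribution. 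The correct purity criterion coming from \cite[Proposition 6.5]{BMPSTWW3}, which the paper uses, is about the map $A \xrightarrow{1 \mapsto f} (f\widehat{A^+})_{\perfd}$ into the \emph{ideal} (not the quotient), and the socle element whose survival matters is $[1/(px_2\cdots x_n)] \otimes f \in E_A \otimes_A (f\widehat{A^+})_{\perfd}$, which is not the condition you wrote down.

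\textbf{The universality claim for the reverse direction is unsupported.} You assert that given any pure $R \to S$ with $S$ perfectoid, there is a canonical map $B \to \widehat{S^+}$ through which the composite factors, appealing to ``universality of $\widehat{A^+}$ as a perfectoid cover.'' There is no such universality: $\widehat{A^+}$ is not an initial (nor terminal) object among perfectoid $A$-algebras, and there is no reason for $\widehat{S^+}$ to receive a map from $\widehat{A^+}$. This is exactly where the paper does real work: it first shows $A[f^{1/p^e}]$ is a domain, then invokes Andr\'e's flatness lemma to produce a perfectoid $A_\infty$-algebra $B$ dominating both the witness to perfectoid purity and a compatible system of roots of $f$, and finally runs a socle argument through \cite[Lemma 5.1.6]{CaiLeeMaSchwedeTucker} to pass the nonvanishing to $\widehat{A^+} \otimes_A E$. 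That argument cannot be replaced by the factorization you sketch.

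By contrast, the paper's forward direction ($\ppt(f)=1 \Rightarrow$ perfectoid pure) is quite short: rephrase $\ppt(f)=1$ as purity of $A \xrightarrow{1\mapsto f} f^\epsilon \widehat{A^+}$ for all small $\epsilon$, note that $p$-closure is invisible after tensoring with the $p$-power-torsion module $E_A$, conclude purity of $A \to (f\widehat{A^+})_{\perfd}$, and cite \cite[Proposition 6.5]{BMPSTWW3}. If you want to avoid citing that proposition, you would need to reprove the Gorenstein duality between perfectoid purity of $A/(f)$ and injectivity of the Frobenius-like map associated to $f$, which is a nontrivial ingredient, not something that unpacks via a single \v{C}ech computation.
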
 
Related statements are true in greater generality, and even for ramified $A$; details can be found in \autoref{rem.WeakenHypothesesOfPPT} below.
\begin{proof}
    Suppose first that $\ppt(f) = 1$.  Then the map $A \xrightarrow{1 \mapsto f^{1-\epsilon}}\widehat{A^+}$ is pure for every $1 \gg \epsilon > 0$.  This is equivalent to the purity of 
    \[
        A \xrightarrow{1 \mapsto 1} f^{-(1-\epsilon)} \widehat{A^+}.
    \]
    But that is equivalent to the purity of the inclusion
    \[
        A \xrightarrow{1 \mapsto f}  f^{\epsilon} \widehat{A^+}.
    \]
    for all $1 \gg \epsilon > 0$.  Let $(f \widehat{A^+})_{\perfd}$ denote the perfectoidization of the ideal $f \widehat{A^+}$. As $\widehat{A^+}$ has a compatible system of $p$-power roots of $f$, $(f \widehat{A^+})_{\perfd} = (f^{1/p^\infty} \widehat{A^+})^{-}$ where $(\bullet)^{-}$ denotes $p$-closure.  As purity can be checked by verifying the injectivity of the map after tensoring with $E$, the injective hull of the residue field of $A$ (whose elements are $p$-power torsion), we see that the $p$-closure is harmless and hence 
    \[
        A \xrightarrow{1 \mapsto f}  (f\widehat{A^+})_{\perfd}
    \]
    is also pure.  We then see from \cite[Proposition 6.5]{BMPSTWW3} that $A/(f)$ is perfectoid pure as $A$ is Gorenstein so that perfectoid injectivity and perfectoid purity coincide.

        Conversely, suppose that $A/(f)$ is perfectoid pure.  The main idea is that  by  \cite[Theorem 6.6]{BMPSTWW3}, we know there exists a perfectoid pure $A$-algebra $B$ such that $(f) \to (fB)_{\perfd}$ is split.  We will show that this implies that $A \xrightarrow{1 \mapsto f^{1-\epsilon}} \widehat{A^+}$ is pure.
        
        Consider the auxiliary ring $R := A[T]/(T^{N}-f)$ for some integer $N$ not divisible by $p$ or $2$, we first observe that $R$ is an integral domain.  Since $A[T]$ is regular, it suffices to show that $S = K(A)[T]/(T^{N}-f)$ is an integral domain.  If $S$ is not an integral domain, then for instance by \cite[Chapter VI, Theorem 9.1]{Lange.Algebra}, we have that $f$ is an $\ell$-th power for some prime $\ell$ dividing $N$.  
        If $f = h^\ell$ is a $\ell$th power, then $h \in R$ by normality, and so $A/(f)=A/(h^\ell)$ is not reduced and hence is not perfectoid pure, contradicting our assumption.   
        Thus we may assume that $R$ is an integral domain.

    Set $A_{\infty} \coloneqq A[p^{1/p^{\infty}}, \dots, x_n^{1/p^{\infty}}]^{\wedge_p}$, 
    and let $R_{\infty} \coloneqq (R \otimes_A A_{\infty})_{\perfd}$.  
    As mentioned above, by \cite[Theorem 6.6]{BMPSTWW3}, there exists a perfectoid $B$ such that $(fA) \to (fB)_{\perfd}$ is pure.  
    By Andr\'e's flatness lemma \cite[Theorem 7.14]{BhattScholzepPrismaticCohomology} and the purity assumption (see for instance also \cite[Lemma 4.5]{BMPSTWW3}), 
    we can suppose that $p, x_2, \dots, x_n$ have compatible systems of $p$th power roots in $B$ and hence we can choose $B$ to be a perfectoid $A_{\infty}$-algebra 
    (this is the same argument as \cite[Lemma 4.23]{BMPSTWW3}).  
    Furthermore, as above we can assume that $f$ has an $N$th root $g$, and that $g$ has a compatible system of $p$th power roots, and hence $f$ also has such a system (note, we can even choose $B$ absolutely integrally closed). 
    By construction and universal properties, we have a map $R_{\infty} := (R \otimes_A A_{\infty})_{\perfd} \to B$ sending $T \mapsto g$.  Thus, from here on, we also write $g = T \in R$.  We notice that in a map $R_{\infty} \to R^+$, that $g$ must be sent to some $f^{1/N}$, an $N$th root of $f$.

    As $g^N = f$ and since $(f) \to (fB)_{\perfd} = (g B)_{\perfd} = (g^{1/p^\infty})^-$ is pure, we see that $A \xrightarrow{1 \mapsto g^{(N-1)} g^{1/p^{e}}} B$ is also pure for $e \gg 0$.      Set $\eta$ to be a socle generator for $E$, the injective hull of the residue field of $A$.  Then
    \[
        (g^{1/(p^\infty)}) \big( g^{N-1} \otimes \eta \big) \subseteq B \otimes_A E
    \]
    is nonzero for $e \gg 0$. Hence, as elements of $E$ are annihilated by powers of $p$, we also have that 
    \[
        (f R_{\infty})_{\perfd} \big( g^{N-1} \otimes \eta \big) \subseteq R_{\infty} \otimes_A E = R_{\infty} \otimes_R (R \otimes_A E)
    \]
    is nonzero.  Finally, by \cite[Lemma 5.1.6]{CaiLeeMaSchwedeTucker}, using that $A \to R$ is \'etale outside of $V(f)$, and since $g \mapsto f^{1/N} \in R^+$, we see that 
    \[
        (f^{1/p^{\infty}}) \big( f^{(N-1)/N} \otimes \eta \big) \subseteq \widehat{R^+} \otimes_R (R \otimes_A E) = \widehat{A^+} \otimes_A E
    \]
    is also nonzero.
    
    Since this holds for every such integer $N$, we see that the map 
    \[
        A \xrightarrow{f^{1-\epsilon}} \widehat{A^+}
    \]
    is pure for any rational $1 \gg \epsilon > 0$.  This completes the proof.
\end{proof}

\begin{remark}
\label{rem.WeakenHypothesesOfPPT}
    In an arbitrary Cohen-Macaulay local domain $A$ (not necessarily regular), if $\ppt(f) = 1$ then the argument above shows that $A/fA$ is perfectoid injective (and hence is perfectoid pure if $A$ is additionally Gorenstein) without substantial change.

    The converse argument used a base perfectoid ring $A_{\infty} = W(k)\llbracket x_2, \dots, x_d\rrbracket[p^{1/p^{\infty}}, \dots, x_d^{1/p^{\infty}}]^{\wedge_p}$ as in \cite[Lemma 5.1.6]{CaiLeeMaSchwedeTucker}.  If the original ring for instance is $A = W(k)\llbracket p^{1/p^n}, x_2, \dots, x_d\rrbracket$, which also embeds in $A_{\infty}$, then our argument, as well as that of \cite[Lemma 5.1.6]{CaiLeeMaSchwedeTucker}, doesn't change and the same conclusion holds.

    Based on the characteristic $p > 0$ picture, both directions of \autoref{prop.PPTWhenHypersurfaceIsPerfdPure} should also hold in any +-regular/splinter ambient ring (and the analogous result should hold for the \emph{perfectoid-pure threshold} in a perfectoid pure ring).  We do not attempt this however as we do not need it.
\end{remark}

\subsection{Diagonal hypersurfaces}

Let us recall a result of Hern\'andez about $F$-pure thresholds of diagonal hypersurfaces.
\begin{theorem}[{\cite[Theorem 3.4 and Corollary 3.9]{HernandezFInvariantsOfDiagonalHyp}}] \label{t-HerDiagHyp}
  Let $f\coloneqq x_1^{s_1}+x_2^{s_2}+ \dots+x_n^{s_n} \in k[x_1, \dots, x_n]$, where $k$ is a perfect field of characteristic $p>0$.
Write $\frac{1}{s_i} = \sum_{e \geq 1} p^{-e}s_{i,e}$, so that $s_{i,e}$ are not eventually zero and $0 \leq s_{i,e} \leq p-1$.
Define $L\coloneqq \min \{ e\geq 0 : \sum_{i=1}^n s_{i,e+1} \geq p \}$ and assume $L < \infty$.
Then
\[
    \fpt(f) = \frac{1}{p^L} \left( \sum_{e=1}^L \sum_{i=1}^n p^{L-e}s_{i,e} +1 \right).
    \]
If, instead, $L=\infty$, then
\[
\fpt(f)= \sum_{i=1}^n 1/s_i.
\]

In particular, if $f = \sum_{i=1}^{d} x_i^d \in k[x_1, \dots, x_{d}]$ for some $d > 0$, then
    $$\fpt(f) = \begin{cases}
        \frac{1}{p^s} & \qquad \textrm{if }p^s \leq d < p^{s+1} \textrm{ for some }s \geq 1; \\
        1 - \frac{\alpha - 1}{p} & \qquad \textrm{if } 0 < d < p \textrm{ and } p \equiv_d \alpha, 1 \leq \alpha < d.
        \end{cases}$$
\end{theorem}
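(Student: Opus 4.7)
The plan is to convert the computation of $\fpt(f)$ into a base-$p$ digit optimization. Because $f=\sum_i x_i^{s_i}$ is diagonal, the multinomial expansion $f^\nu=\sum_{|\vec a|=\nu}\binom{\nu}{\vec a}\prod_i x_i^{s_i a_i}$ has a distinct monomial per term (the exponent $s_i a_i$ determines $a_i$), so $f^\nu\notin\fm^{[p^e]}$ iff some $\vec a\in\NN^n$ with $|\vec a|=\nu$ satisfies $s_i a_i\leq p^e-1$ for all $i$ and $\binom{\nu}{\vec a}\not\equiv 0\pmod p$. Then $\fpt(f)$ is the supremum of $\nu/p^e$ over such admissible pairs.

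Next I would unpack both constraints in terms of base-$p$ digits. Since $p\nmid s_i$ (equivalent to the digits $s_i^j$ not being eventually zero), a short calculation shows that the greedy choice $a_i^{(e)}:=\lfloor(p^e-1)/s_i\rfloor$ equals $\sum_{j=1}^e s_i^j\,p^{e-j}$, i.e.\ its base-$p$ digits at positions $e-1,\dots,0$ are exactly $s_i^1,\dots,s_i^e$. Applying Kummer's theorem iteratively (or Lucas for multinomials), $\binom{\nu}{\vec a}\not\equiv 0\pmod p$ iff no carries arise when summing the $a_i$ in base $p$, i.e.\ each column sum $\sum_i a_i^{(j)}\leq p-1$. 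Applied to the greedy this holds precisely when $\sum_i s_i^j\leq p-1$ for all $j\leq e$, i.e.\ when $e\leq L$.

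This gives the lower bound in both cases. If $L=\infty$, the greedy $\vec a^{(e)}$ is admissible for every $e$, so $\fpt(f)\geq\nu^{(e)}/p^e\to\sum_i 1/s_i$, with the matching upper bound from $|\vec a|\leq\sum_i(p^e-1)/s_i$ for any admissible $\vec a$. If $L<\infty$, the greedy is admissible at level $L$, giving $\fpt(f)\geq\nu^{(L)}/p^L$; to sharpen this to $(\nu^{(L)}+1)/p^L$, I would approach the target from below by showing that for every $k\geq 1$ the value $\nu=(\nu^{(L)}+1)p^k-1$ is achievable at level $L+k$. Its base-$p$ digit pattern consists of $\nu^{(L)}$'s digits at the top $L$ positions followed by $k$ copies of $p-1$ at the lower positions, which one realizes by taking greedy top-$L$ digit contributions for each $a_i$ and then redistributing digit mass in the lower $k$ positions using the freedom in $a_i\leq a_i^{(L+k)}$. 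The redistribution is delicate: the naive digit-wise bound $\beta_i^{(j)}\leq s_i^{L+k-j}$ can fail to sum to $p-1$ at positions where $\sum_i s_i^{L+k-j}<p-1$, and one must borrow mass from higher positions rather than assigning digit-wise.

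The main obstacle is the matching upper bound when $L<\infty$: one must rule out any admissible $(\nu,\vec a)$ at any level $e>L$ beating $(\nu^{(L)}+1)/p^L$. The plan is an inductive digit-propagation argument: the bound $a_i\leq a_i^{(e)}$ combined with the no-carry condition forces a strict deficit at the digit position where the greedy column sum $\sum_i s_i^{L+1}\geq p$ would have overflowed, and propagating this deficit through the lower digit positions yields the sharp bound $\nu\leq p^{e-L}(\nu^{(L)}+1)-1$, hence $\nu/p^e<(\nu^{(L)}+1)/p^L$. Combined with the lower bound this pins $\fpt(f)$. The ``In particular'' statement then follows by a short digit computation: for $p^s\leq d<p^{s+1}$ one has $s_i^e=0$ for $e\leq s$ while $\sum_i s_i^{s+1}\geq p$, so $L=s$, $\nu^{(L)}=0$, and $\fpt=1/p^s$; for $0<d<p$ with $p\equiv_d\alpha$ and $\alpha\geq 2$, the first digit of $1/d$ is $(p-\alpha)/d$, so $L=1$, $\nu^{(L)}=p-\alpha$, and $\fpt=(p-\alpha+1)/p=1-(\alpha-1)/p$ (when $\alpha=1$ one has $L=\infty$ and $\fpt=1$, consistent with the formula).
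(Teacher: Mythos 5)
The paper does not prove this statement --- it is quoted verbatim from Hern\'andez \cite{HernandezFInvariantsOfDiagonalHyp} and used as an input, so there is no in-paper proof to compare against. Evaluated on its own terms, your proposal has the right skeleton: converting $f^\nu \notin \fm^{[p^e]}$ into the existence of $\vec a$ with $|\vec a|=\nu$, $s_i a_i \le p^e-1$, and $\binom{\nu}{\vec a}\not\equiv 0\ (\mathrm{mod}\ p)$ is correct (distinct monomials because $f$ is diagonal), and the identity $\lfloor(p^e-1)/s_i\rfloor=\sum_{j=1}^e s_i^j p^{e-j}$ together with Lucas/Kummer is the right machinery. One small error: your parenthetical that $p\nmid s_i$ is ``equivalent to the digits $s_i^j$ not being eventually zero'' is false --- the nonterminating base-$p$ convention is available for every $s_i\ge 2$, including $p\mid s_i$ (e.g.\ $1/p=\sum_{j\ge 2}(p-1)p^{-j}$), and in fact the identity for $\lfloor(p^e-1)/s_i\rfloor$ holds in all cases. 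This does not damage the rest of the argument, but the stated justification is wrong.

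The two places you flag as unfinished can in fact be closed, and more easily than you anticipate. For the lower bound, you do not need a delicate digit-by-digit redistribution: once the column sum at position $k-1$ (the top of the lower block) is reduced from $\sum_i s_i^{L+1}\ge p$ down to $p-1$, at least one index $i_0$ has its digit there strictly below $s_{i_0}^{L+1}$. That single strict drop means $a_{i_0}$ can carry digit $p-1$ at \emph{every} lower position without violating $a_{i_0}\le a_{i_0}^{(L+k)}$, so one simply sets all the remaining lower digits of the other indices to $0$ and of $i_0$ to $p-1$. For the upper bound, the ``deficit propagation'' is unnecessary. Let $\alpha_i:=\lfloor a_i/p^{e-L}\rfloor$; since $a_i\le a_i^{(e)}$, one has $\alpha_i\le\lfloor a_i^{(e)}/p^{e-L}\rfloor=a_i^{(L)}$. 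The no-carry condition forces the top-$L$-digit block of $\nu$ to equal $\sum_i\alpha_i\le\sum_i a_i^{(L)}=\nu^{(L)}$, and the bottom $e-L$ digits contribute at most $p^{e-L}-1$, giving $\nu\le\nu^{(L)}p^{e-L}+p^{e-L}-1<(\nu^{(L)}+1)p^{e-L}$ directly. Combined with the lower bound this pins $\fpt(f)=(\nu^{(L)}+1)/p^L$. Your ``in particular'' computations are correct (with the note that the case $\alpha=1$ falls under $L=\infty$). So the plan is sound but you should replace the hand-waving at both crux points with these short arguments, and fix the $p\nmid s_i$ remark.
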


We prove that for diagonal hypersurfaces, the $F$-pure threshold of the corresponding equation in positive characteristic is always a lower-bound, with a blow-up argument similar to \cite[Remark 2.10 and Proposition 2.8]{CaiPandeQuinlanGallegoSchwedeTuckerpluspurethresholdscusplikesingularities}.

\begin{lemma} \label{l-blow-up}
Let $(V,\varpi)$ be a mixed characteristic $(0,p>0)$ complete DVR with uniformizer $\varpi$. Suppose
\begin{itemize}
    \item[(i)] either that $R=V\llbracket x \rrbracket$, $f\coloneqq \varpi^a+x^b \in R$ and $f_0\coloneqq y^a+x^b \in V/(\varpi)\llbracket x, y \rrbracket$;
    \item[(ii)] or that $R = V\llbracket x_2, \dots, x_n\rrbracket$, $f\coloneqq \varpi^d + x_2^d+\dots+x_n^d \in V\llbracket x_2, \dots, x_n \rrbracket$ and $f_0 \coloneqq x_1^d + x_2^d+\dots+x_n^d \in V/(\varpi)\llbracket x_2, \dots, x_n \rrbracket$.
\end{itemize}
    Then $\fpt(f_0) \leq \ppt(f) \leq \lct(f)$.
\end{lemma}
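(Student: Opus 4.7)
The upper bound $\ppt(f) \leq \lct(f)$ holds in general for regular complete local rings of mixed characteristic, as recalled in the introduction from \cite{BhattAbsoluteIntegralClosure, MaSchwedeSingularitiesMixedCharBCM}. So the plan is to focus on the lower bound $\fpt(f_0) \leq \ppt(f)$, treating case (ii) in detail; case (i) is entirely analogous, with $\varpi$ and $x$ playing the roles of two variables (and a weighted filtration when $a \neq b$).

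The guiding observation is that in the associated graded ring $\gr_\fm R \cong k[X_1, \ldots, X_n]$ (where $k = V/(\varpi)$, $X_1$ is the initial form of $\varpi$, and $X_i$ is the initial form of $x_i$ for $i \geq 2$), the initial form of $f = \varpi^d + x_2^d + \ldots + x_n^d$ is $X_1^d + X_2^d + \ldots + X_n^d$, matching $f_0$ under $X_1 \leftrightarrow x_1$. By the general principle recorded in \cite[Section~7]{MaSchwedeTuckerWaldronWitaszekAdjoint} (and specialized in \cite[Remark~2.10 and Proposition~2.8]{CaiPandeQuinlanGallegoSchwedeTuckerpluspurethresholdscusplikesingularities}), one then has $\ppt(f) \geq \fpt(\ini_\fm f) = \fpt(f_0)$.

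For a self-contained argument, I would assume $\lambda = \nu/p^e < \fpt(f_0)$ and seek to show $f^\lambda \notin \fm R^+$. A first application of \autoref{l-lemma2.3}(ii) with $\epsilon = 1$ and $y_j = x_j^{p^e}$ --- using the containment $\fm R^+ \subseteq (\varpi^{1/p^e}, x_2, \ldots, x_n) R^+$ coming from $\varpi = (\varpi^{1/p^e})^{p^e}$ --- would give $f^\nu \in (\varpi, x_2^{p^e}, \ldots, x_n^{p^e}) R^+$. On its own, this recovers only the bound $\ppt(f) \geq \fpt(\overline{f})$ of \autoref{lem.pptVsFPTEasyComparison}, which is strictly weaker than the claim. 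The central step is to promote the containment to
\[
f^\nu \in (\varpi^{p^e}, x_2^{p^e}, \ldots, x_n^{p^e})\, \widehat{R^+},
\]
placing $\varpi$ on equal footing with the $x_j$. Once this is in hand, the multinomial expansion of $f^\nu = (\varpi^d + x_2^d + \ldots + x_n^d)^\nu$ contains, for every tuple $(i_1, \ldots, i_n)$ with $\sum i_j = \nu$ and $d i_j < p^e$ for all $j$, the monomial $\binom{\nu}{i_1, \ldots, i_n} \varpi^{d i_1} x_2^{d i_2} \cdots x_n^{d i_n}$. The hypothesis $\nu/p^e < \fpt(f_0)$ together with the standard Kummer--Lucas combinatorial characterization of $\fpt(f_0)$ supplies such a tuple with $\binom{\nu}{i_1, \ldots, i_n}$ a unit in $V$; the corresponding monomial is nonzero in $R/(\varpi^{p^e}, x_2^{p^e}, \ldots, x_n^{p^e})$, and the colon-capturing property of $\widehat{R^+}$ as a big Cohen--Macaulay $R$-algebra (coming from \cite{BhattAbsoluteIntegralClosure}) applied to the regular sequence $\varpi^{p^e}, x_2^{p^e}, \ldots, x_n^{p^e}$ yields $(\varpi^{p^e}, x_2^{p^e}, \ldots, x_n^{p^e})\widehat{R^+} \cap R = (\varpi^{p^e}, x_2^{p^e}, \ldots, x_n^{p^e})$, delivering the contradiction.

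The principal technical obstacle is precisely the promotion step, strengthening $(\varpi, x_2^{p^e}, \ldots, x_n^{p^e})$ to $(\varpi^{p^e}, x_2^{p^e}, \ldots, x_n^{p^e})$. The bound $\epsilon \leq 1$ in \autoref{l-lemma2.3}(ii) prevents a direct iterated application, so this requires either invoking the abstract associated-graded framework of \cite[Section~7]{MaSchwedeTuckerWaldronWitaszekAdjoint} or supplying a careful Kummer-style argument paralleling the proof of \autoref{l-lemma2.3} that exploits the symmetric form of $f$ in $\varpi, x_2, \ldots, x_n$.
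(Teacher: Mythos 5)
Your proposal stops exactly where the real work begins, and you say so yourself: the ``promotion step'' from $f^\nu \in (\varpi, x_2^{p^e}, \dots, x_n^{p^e})\widehat{R^+}$ to $f^\nu \in (\varpi^{p^e}, x_2^{p^e}, \dots, x_n^{p^e})\widehat{R^+}$ is precisely the content of the lemma, and nothing in your write-up actually accomplishes it. This implication is not a general fact about elements (take $z = \varpi$ itself: it lies in the first ideal but not the second), so it must exploit the structure of $f$, and no amount of reshuffling \autoref{l-lemma2.3} will deliver it, because that lemma only interacts with $\varpi$-powers $\leq 1$ (or $\leq p/(p-1)$), while what you want requires comparing $\varpi$ against $\varpi^{p^e}$. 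The first two steps you do carry out (extracting $f^\nu \in (\varpi, x_2^{p^e}, \dots, x_n^{p^e})$) only reprove $\ppt(f) \geq \fpt(\ov{f})$, which as you correctly note is strictly weaker.

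The paper's proof takes a genuinely different route and avoids the promotion step entirely. It blows up the closed point, observes that the strict transform of $\operatorname{div}(f)$ restricted to the exceptional $E \cong \PP^{n-1}$ is defined by $f_0$, and then applies \emph{inversion of adjunction}: if $t < \fpt(f_0)$ then $(E, t\operatorname{div}(f_0))$ is globally $F$-regular by \cite[Proposition 5.3]{SchwedeSmithLogFanoVsGloballyFRegular}, whence $(\Spec R, t\operatorname{div}(f))$ is $+$-regular by \cite[Lemma 7.2]{MaSchwedeTuckerWaldronWitaszekAdjoint}, giving $\ppt(f) \geq \fpt(f_0)$. The upper bound is read off the same blow-up by computing the discrepancy $n-1-td$ along $E$ and using that $+$-regular implies klt. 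When you gesture at ``invoking the abstract associated-graded framework of \cite[Section 7]{MaSchwedeTuckerWaldronWitaszekAdjoint},'' you are in effect reverting to the paper's geometric argument, but this framework does not yield your promotion step — it bypasses it and proves the inequality on $\ppt$ directly. If you want a self-contained Kummer-style argument in the spirit of your outline, you would need a substantially new idea; as written the proposal is an accurate diagnosis of the difficulty but not a proof.
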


\begin{proof}
Case (i) follows from \cite[Proposition 2.8]{CaiPandeQuinlanGallegoSchwedeTuckerpluspurethresholdscusplikesingularities}.\footnote{There it is stated for the ring $\ZZ_p \llbracket x \rrbracket$, but the same argument works for $R$.}
As for case (ii), let $\pi\colon X \to \Spec(R)$ be the blow-up at the origin $V(\varpi, x_2, \dots, x_n)$ and let $E \simeq \mathbb{P}^{n-1}$ be the exceptional divisor. Note that the strict transform of $\mathrm{div}(f)$ in $E$ is defined by $f_0$.
The discrepancy over $(\Spec(R), t\mathrm{div}(f))$ is $n-1-td$.
Since $+$-regular singularities are in particular klt, we have $\ppt(f) \leq n/d= \lct(f)$. 
If $t < \fpt(f_0)$, then $(E, t\mathrm{div}(f_0))$ is globally $F$-regular by \cite[Proposition 5.3]{SchwedeSmithLogFanoVsGloballyFRegular}, therefore, by \cite[Lemma 7.2]{MaSchwedeTuckerWaldronWitaszekAdjoint}, $(\Spec(R), t\mathrm{div}(f))$ is $+$-regular, showing that $\ppt(f) \geq \fpt(f_0)$.
\end{proof}

\subsection{Combinatorial inputs}
We discuss some combinatorial identities needed for our results. To begin with, we recall a classical result of Kummer, used to compute the $p$-adic valuation $v_p\binom{n}{m}$ of the binomial coefficient $\binom{n}{m}$:
\begin{remark}[{Kummer's  Theorem \cite{KummersThm}}] \label{r-Kummer}
Let $p$ be a positive prime integer.
Write the base-$p$ expansion of a natural number $n$ as $n = n_rp^r+n_{r-1}p^{r-1}+\ldots+n_1p+n_0$, with $n_i \in \{ 0, \dots, p-1\}$, and denote $S_p(n) \coloneqq n_0+n_1+\ldots +n_r$. Then
\[v_p\binom{n}{m} = \frac{S_p(m)+S_p(n-m)-S_p(n)}{p-1}.\]
\end{remark}

\begin{comment}
\begin{lemma} Let $p$ be a positive prime integer and $k$ a positive integer. Then $p^2$ does not divide $\binom{p^k}{p^{k-1}}$..,. Let $p$ be a positive prime integer and $k$ a positive integer. Then $p^2$ does not divide $\binom{p^k}{p^{k-1}}$.
\end{lemma}

\begin{proof}
Via the base-$p$ expansions 
\[p^k = 1\cdot p^k+0 \cdot p^{k-1}+\ldots +0\cdot p+0 \; \text{ and } \; p^{k-1} = 1\cdot p^{k-1}+0 \cdot p^{k-2}+\ldots +0 \cdot p+0,\]
we get
\[p^k-p^{k-1} = (p-1) \cdot p^{k-1}+0 \cdot p^{k-2}+\ldots +0 \cdot p+0.\]
So, by Kummer's Theorem \autoref{r-Kummer}
\[v_p\binom{p^k}{p^{k-1}} = \frac{S_p(p^{k-1})+S_p(p^k-p^{k-1})-S_p(p^k)}{p-1} = \frac{1+(p-1)-1}{p-1} = 1.\]
That is, 
\[p\mid\binom{p^k}{p^{k-1}} \quad \text{but} \quad p^2\nmid \binom{p^k}{p^{k-1}}. \qedhere \]
\end{proof}
\end{comment}

\begin{lemma}\label{lem: binomialValuation}
Let $p$ be a positive prime integer. Let $e$ and $i$ be positive integers such that $1 \leq i \leq p^e$. Then
$$ v_p\binom{p^e}{i} =  e - v_p(i).$$
\end{lemma}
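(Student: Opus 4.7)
The plan is to use a classical factorial identity together with Kummer's theorem (Remark just cited). The key observation is the combinatorial identity
\[
    i \binom{p^e}{i} = p^e \binom{p^e-1}{i-1},
\]
valid for $1 \leq i \leq p^e$, which comes from rewriting $i!\,(p^e-i)!$ factorials on both sides. Taking $p$-adic valuations of both sides gives
\[
    v_p(i) + v_p\!\binom{p^e}{i} \;=\; e + v_p\!\binom{p^e-1}{i-1},
\]
so the claim reduces to showing $v_p\!\binom{p^e-1}{i-1} = 0$ for every $1 \leq i \leq p^e$, i.e.\ that $\binom{p^e-1}{j}$ is a unit mod $p$ for all $0 \leq j \leq p^e - 1$.

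The second step is a direct application of Kummer's theorem (\autoref{r-Kummer}). The base-$p$ expansion of $p^e - 1$ is $\sum_{k=0}^{e-1}(p-1)p^k$, so every digit equals $p - 1$, the largest possible. Writing $j = \sum j_k p^k$ with $0 \leq j_k \leq p-1$, the digit-wise subtraction yields $(p^e - 1) - j = \sum_{k=0}^{e-1}(p-1-j_k)p^k$ with no borrows required, which equivalently means that the addition $j + \bigl((p^e-1)-j\bigr) = p^e - 1$ produces no carries in base $p$. Hence
\[
    S_p(j) + S_p(p^e-1-j) - S_p(p^e-1) = 0,
\]
so by Kummer's formula $v_p\!\binom{p^e-1}{j} = 0$.

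Substituting back into the displayed equation yields $v_p\!\binom{p^e}{i} = e - v_p(i)$, as required (and the boundary case $i = p^e$ is handled uniformly, giving $0 = e - e$). There is no real obstacle here; the only thing to be careful about is ensuring the identity $i\binom{p^e}{i} = p^e \binom{p^e-1}{i-1}$ is invoked correctly across the whole range $1 \leq i \leq p^e$, which is immediate from the factorial definition.
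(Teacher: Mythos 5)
Your proof is correct. It takes a mildly different route from the paper's: you first invoke the factorial identity $i\binom{p^e}{i}=p^e\binom{p^e-1}{i-1}$ to shift the problem to showing $v_p\binom{p^e-1}{j}=0$, and then apply Kummer's theorem once; the paper instead applies Kummer's theorem directly to both $\binom{p^e}{i}$ and $\binom{i}{1}$, combines the two expressions, and reduces to the digit-sum identity $S_p(p^e-i)+S_p(i-1)=e(p-1)$. Both arguments ultimately rest on the same combinatorial observation — every base-$p$ digit of $p^e-1$ equals $p-1$, so any way of writing $p^e-1$ as a sum of two nonnegative integers produces no carries — and both use Kummer's theorem, so the content is essentially the same; your factorial-identity reduction buys a single clean Kummer application rather than two, at the cost of citing the extra identity, which is a fair trade and arguably slightly tidier.
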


\begin{proof}
To prove the equality, we show that the sum $v_p\binom{p^e}{i} + v_p(i)$ is equal to $e$. Observe that $v_p(i) =  v_p\binom{i}{1}$. Applying Kummer's Theorem \autoref{r-Kummer} to each of the $p$-adic valuations, we obtain:
$$v_p\binom{p^e}{i} = \frac{S_p(i) + S_p(p^e - i) - S_p(p^e)}{p-1} = \frac{S_p(i) + S_p(p^e - i) - 1}{p-1} $$
and
$$v_p\binom{i}{1} = \frac{S_p(1) + S_p(i - 1) - S_p(i)}{p-1} = \frac{1 + S_p(i - 1) - S_p(i)}{p-1}. $$
Hence, 
$$v_p\binom{p^e}{i} + v_p\binom{i}{1} = \frac{S_p(p^e - i) + S_p(i - 1)}{p-1}.$$

 Now we proceed to prove that $S_p(p^e - i) + S_p(i - 1) = e(p - 1)$ to conclude the proof. Notice that $p^e - i$ and $i - 1$ are non-negative integers such that their sum is equal to $p^e - 1$. Since the base-$p$ expansion of $p^e - 1$ is 
 $$ (p-1) \cdot p^{e-1} + (p-1) \cdot p^{e - 2} + \dots + (p-1) \cdot p^1 + (p-1) \cdot p^0,$$
 then the base-$p$ expansions of $p^e - i$ and $i - 1$ are of the form
 $$ a_{e-1} \cdot p^{e-1} + a_{e-2} \cdot p^{e - 2} + \dots + a_1\cdot p^1 + a_0 \cdot p^0$$
 and
 $$ b_{e-1} \cdot p^{e-1} + b_{e-2} \cdot p^{e - 2} + \dots + b_1\cdot p^1 + b_0 \cdot p^0$$
 respectively, where $a_j + b_j = p - 1$ for every $j$. This implies that 
 $$S_p(p^e - i) + S_p(i - 1) = S_p(p^e - 1) = e(p - 1),$$
 which concludes the proof.
\end{proof}

\begin{lemma} \label{l-magic}
For $p\equiv_3 2$,
\[
\frac{2p^2-2}{3} = \frac{2p-1}{3}p +\frac{p-2}{3}
\]
and
\[
\frac{p^2-1}{3} = \frac{p-2}{3}p +\frac{2p-1}{3}
\]
are the base-$p$ expansions for $\frac{2p^2-2}{3}$ and $\frac{p^2-1}{3}$ respectively. 
\end{lemma}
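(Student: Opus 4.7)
The plan is to check two things: (a) the proposed ``digits'' $\tfrac{2p-1}{3}$ and $\tfrac{p-2}{3}$ are genuine nonnegative integers strictly less than $p$, and (b) the two identities hold by direct arithmetic. Both are elementary manipulations; there is no real obstacle here, since the content of the lemma is purely that one has written the correct base-$p$ expansion, not any deeper number theoretic fact.

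First I would check integrality and the digit range. Since $p\equiv_3 2$, we have $2p-1\equiv_3 2\cdot 2 - 1 = 3 \equiv_3 0$ and $p-2 \equiv_3 0$, so both $\tfrac{2p-1}{3}$ and $\tfrac{p-2}{3}$ are integers. Nonnegativity is clear (noting that for $p=2$ we get $\tfrac{p-2}{3}=0$, which is still a valid digit). For the upper bound, $\tfrac{2p-1}{3}\leq p-1$ is equivalent to $2p-1\leq 3p-3$, i.e. $p\geq 2$, and $\tfrac{p-2}{3}\leq p-1$ is equivalent to $p-2\leq 3p-3$, i.e. $2p\geq 1$; both hold.

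Next I would verify the identities by straightforward computation:
\[
\frac{2p-1}{3}\cdot p + \frac{p-2}{3} \;=\; \frac{p(2p-1) + (p-2)}{3} \;=\; \frac{2p^{2}-p+p-2}{3} \;=\; \frac{2p^{2}-2}{3},
\]
and
\[
\frac{p-2}{3}\cdot p + \frac{2p-1}{3} \;=\; \frac{p(p-2) + (2p-1)}{3} \;=\; \frac{p^{2}-2p+2p-1}{3} \;=\; \frac{p^{2}-1}{3}.
\]
Since the two coefficients appearing in each display lie in $\{0,1,\dots,p-1\}$ by the previous paragraph, these are indeed the base-$p$ expansions of $\tfrac{2p^{2}-2}{3}$ and $\tfrac{p^{2}-1}{3}$, respectively, completing the proof.
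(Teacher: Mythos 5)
Your proof is correct and follows essentially the same route as the paper: verify the algebraic identities directly and check that the two proposed digits lie in $\{0,\dots,p-1\}$, which is all that is needed to recognize a base-$p$ expansion. You spell out the integrality and the arithmetic more explicitly than the paper does (which simply calls the identities "clearly true algebraically"), and you correctly flag the $p=2$ edge case where $\tfrac{p-2}{3}=0$; this is harmless here, and in any case the lemma is only invoked downstream for $p>3$.
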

\begin{proof}
    The equalities above are clearly true algebraically, and as $p \equiv_3 2$, both $\frac{2p-1}{3}$ and $\frac{p-2}{3}$ are integers strictly less than $p$, and hence not $p$-divisible. Thus we conclude these are base-$p$ expansions.
\end{proof}

\begin{lemma}
\label{lem: LucasThmCor}
    For $p > 3$ a positive prime integer such that $p \equiv_3 2$, set $k = \frac{p^2 - 1}{3}$. Then $k$ is an integer and $p$ divides $\binom{2k}{k}$. 
\end{lemma}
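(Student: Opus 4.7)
The plan is to prove both claims by direct application of Kummer's theorem (\autoref{r-Kummer}), using the base-$p$ expansions supplied by \autoref{l-magic}.

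First, since $p \equiv_3 2$ we have $p^2 \equiv_3 1$, so $p^2 - 1$ is divisible by $3$ and $k = (p^2-1)/3$ is an integer. Moreover, $2k = (2p^2 - 2)/3$ is exactly the quantity whose base-$p$ expansion is given in \autoref{l-magic}, so that lemma applies directly.

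Next, I would read off the digit sums from \autoref{l-magic}. The expansion $k = \frac{p-2}{3}\cdot p + \frac{2p-1}{3}$ gives
\[
    S_p(k) = \frac{p-2}{3} + \frac{2p-1}{3} = p - 1,
\]
and the expansion $2k = \frac{2p-1}{3}\cdot p + \frac{p-2}{3}$ gives $S_p(2k) = p-1$ as well. Plugging into Kummer's formula yields
\[
    v_p\!\binom{2k}{k} \;=\; \frac{2S_p(k) - S_p(2k)}{p-1} \;=\; \frac{2(p-1) - (p-1)}{p-1} \;=\; 1,
\]
so in particular $p \mid \binom{2k}{k}$, as required.

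There is essentially no obstacle: the content of the proof is already packaged in \autoref{l-magic}, and the rest is a one-line computation with Kummer's theorem. (As a sanity check, note that the computation also gives the sharper statement $v_p\binom{2k}{k} = 1$, meaning $p^2 \nmid \binom{2k}{k}$, which matches what one sees by performing the base-$p$ addition $k + k$ directly: the low digit $\frac{2p-1}{3} + \frac{2p-1}{3} = p + \frac{p-2}{3}$ produces a single carry into the high digit, and the high digit $\frac{p-2}{3} + \frac{p-2}{3} + 1 = \frac{2p-1}{3} < p$ produces no further carry.)
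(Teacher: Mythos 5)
Your proof is correct, but it takes a genuinely different route from the paper. The paper applies Lucas's theorem: using the base-$p$ expansions from \autoref{l-magic}, it reduces $\binom{2k}{k}$ modulo $p$ to a product of digit-wise binomial coefficients, one of which is $\binom{(p-2)/3}{(2p-1)/3} = 0$ since the bottom exceeds the top. You instead apply Kummer's theorem (\autoref{r-Kummer}) and compute the $p$-adic valuation directly from the digit sums, finding $v_p\binom{2k}{k} = 1$. Both arguments lean on the same combinatorial input (\autoref{l-magic}) and are comparably short, but yours yields slightly more information: it pins down the exact valuation and in particular shows $p^2 \nmid \binom{2k}{k}$, which the Lucas route does not immediately give. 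Since the paper already sets up Kummer's theorem earlier (\autoref{r-Kummer}, used in \autoref{lem: binomialValuation}), your approach arguably reuses the existing machinery more economically, whereas the paper's proof is the only place Lucas's theorem is invoked.
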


\begin{proof}
    If $p > 3$ then $p$ is equivalent to $1$ or $2$ mod $3$, so $p^2 \equiv_3 1$. From this we easily see that $k \in \ZZ$. We then use Lucas's Theorem \cite{LucasThm} and \autoref{l-magic} to conclude that
    $$\binom{2k}{k} \equiv_p \binom{(2p-1)/3}{(p-2)/3} \cdot \binom{(p-2)/3}{(2p-1)/3}.$$
    Since $\frac{p-2}{3} < \frac{2p-1}{3}$ for any $p > 3$, it follows that $\binom{(p-2)/3}{(2p-1)/3} = 0$ and thus $\binom{2k}{k} \equiv_p 0$. 
\end{proof}

\section{Ramification and plus-pure thresholds}\label{sec:ramification_ppt}

The point of this section is to make some observations on the connection between ramification over $p$ in finite extensions and plus-pure threshold of hypersurfaces.

\begin{lemma}
\label{lem.EasyLemmaBoundWhenAddingRoots}
    Suppose $R = V\llbracket x_2, \dots, x_n\rrbracket$ where $(V,\varpi)$ is a mixed characteristic $(0,p>0)$ complete DVR with uniformizer $\varpi$.  Let $0 \neq f \in R$ and let $\overline{f}$ denote the image of $f$ in $R/(\varpi) = V/(\varpi)\llbracket x_2, \dots, x_n\rrbracket$.  Suppose that $\fpt(\overline{f}) \leq {a/p^e}$ and that $V' \supseteq V$ is an extension of DVRs, where $V'$ contains some $p^e$-th root of $\varpi$, which we denote by $\varpi^{1/p^e}$.  Then $\ppt(f \in V'\llbracket x_2, \dots, x_n\rrbracket) \leq {a/p^e}$ as well.
\end{lemma}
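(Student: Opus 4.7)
The plan is to translate the $F$-pure threshold hypothesis into a concrete ideal containment in $R$, extract $p^e$-th roots using \autoref{l-lemma2.3}(ii), and then use the hypothesis on $V'$ to land the result inside $\fm' (R')^+$.

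First, I would unpack the assumption $\fpt(\overline{f}) \leq a/p^e$. By the characterization of the $F$-pure threshold in the regular local ring $R/(\varpi)$, this bound forces
\[
\overline{f}^a \in \overline{\fm}^{[p^e]} = (\overline{x}_2^{p^e},\dots,\overline{x}_n^{p^e});
\]
lifting to $R$, we get $f^a \in (\varpi, x_2^{p^e},\dots,x_n^{p^e})\cdot R$.

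Next, I would apply \autoref{l-lemma2.3}(ii) with $\epsilon = 1$, $z = f^a$, and $y_i = x_{i+1}^{p^e}$ to extract $p^e$-th roots in $R^+ = (R')^+$ (the two agree since $R \subseteq R'$ is finite). This produces
\[
f^{a/p^e} \in (\varpi^{1/p^e}, x_2, \dots, x_n) \cdot (R')^+.
\]
Since the hypothesis guarantees $\varpi^{1/p^e} \in V'$, this element lies in the maximal ideal $\fm'$ of $R' = V'\llbracket x_2,\dots,x_n\rrbracket$. Therefore $(\varpi^{1/p^e}, x_2,\dots,x_n)(R')^+ \subseteq \fm'(R')^+$, so $f^{a/p^e} \in \fm'(R')^+$. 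Combining with \autoref{lem.AlternateDescriptionPPT} yields $\ppt(f \in R') \leq a/p^e$, as desired.

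The main subtlety is in the very first step: deducing $\overline{f}^a \in \overline{\fm}^{[p^e]}$ from $\fpt(\overline{f}) \leq a/p^e$. This rests on the standard fact that in a regular local ring the supremum defining $\fpt$ is approached from below rather than attained as a maximum, so an upper bound at $a/p^e$ yields the ideal containment at the exact level $(a,e)$. Once this is in hand, the remainder of the argument is a direct application of \autoref{l-lemma2.3}(ii) together with the observation that $\varpi^{1/p^e}$ lies in $\fm'$.
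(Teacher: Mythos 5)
Your proof is correct and follows essentially the same route as the paper's: deduce $\overline{f}^a \in \overline{\fm}^{[p^e]}$, lift to $f^a \in (\varpi, x_2^{p^e},\dots,x_n^{p^e})$, extract $p^e$-th roots via \autoref{l-lemma2.3}(ii) to land $f^{a/p^e}$ in $(\varpi^{1/p^e}, x_2,\dots,x_n)(R')^+ \subseteq \fm'(R')^+$, and conclude via \autoref{lem.AlternateDescriptionPPT}. You are also right to flag the only genuine subtlety — that $\fpt(\overline{f}) \le a/p^e$ already forces $\overline{f}^a \in \overline{\fm}^{[p^e]}$, because in a regular local ring $g^a \notin \fn^{[p^e]}$ implies the strict inequality $\fpt(g) > a/p^e$; the paper invokes this silently. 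One small inaccuracy worth correcting: your parenthetical ``$R^+ = (R')^+$ since $R \subseteq R'$ is finite'' is not justified, as the hypotheses only require $V' \supseteq V$ to be an extension of DVRs containing a $p^e$-th root of $\varpi$, not a finite one; this does not affect the argument, however, since $f^a \in (\varpi, x_2^{p^e},\dots,x_n^{p^e})R \subseteq (\varpi, x_2^{p^e},\dots,x_n^{p^e})(R')^+$ holds trivially and \autoref{l-lemma2.3}(ii) is applied directly in the absolutely integrally closed domain $(R')^+$.
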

\begin{proof}    
Note that $p \in \varpi R$ and hence we also have that $p^{1/p^e} \in \varpi^{1/p^e} R^+$ for any choice of $p^e$-th roots. 
It suffices to show that if $\fpt(\overline{f}) < a/p^e$, then $\ppt(f \in V'\llbracket x_2, \dots, x_n\rrbracket) < {a/p^e}$ so let us assume the former.  By assumption $\overline{f} \in (x_2^{p^e}, \dots, x_n^{p^e})$, and hence $f^a \in (x_2^{p^e}, \dots, x_n^{p^e}, \varpi)$.  But now applying \autoref{l-lemma2.3}(ii), we see that $f^{a/p^e} \in (x_2, \dots, x_n, \varpi^{1/p^e})R^+$, and the result follows.
\end{proof}

\begin{corollary}
\label{cor.EqualityOnTheNoseAfterRamifying}
With notation as in \autoref{lem.EasyLemmaBoundWhenAddingRoots}, suppose $\fpt(\overline{f}) = a/p^e$ for some integer $a$ (that is, the base-$p$ expansion of $\fpt(\overline{f})$ terminates after $e$ steps).  Then $\ppt(f \in V'\llbracket x_2, \dots, x_n\rrbracket) = \fpt(\overline{f})$.
\end{corollary}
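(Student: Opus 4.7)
The plan is to combine the upper bound from \autoref{lem.EasyLemmaBoundWhenAddingRoots} with a matching lower bound from \autoref{lem.pptVsFPTEasyComparison}. First, I would observe that, since $\fpt(\overline{f}) = a/p^e$ in particular meets the hypothesis of \autoref{lem.EasyLemmaBoundWhenAddingRoots} (as an equality), that lemma directly yields
\[
    \ppt(f \in V'\llbracket x_2, \dots, x_n\rrbracket) \leq \frac{a}{p^e}.
\]

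For the reverse inequality, I would set $R' \coloneqq V'\llbracket x_2, \dots, x_n\rrbracket$ and let $\varpi'$ be a uniformizer of $V'$. Then $\varpi' \mid p$ and $R'/(\varpi') \cong k'\llbracket x_2, \dots, x_n\rrbracket$ is regular, where $k'$ is the residue field of $V'$. Applying \autoref{lem.pptVsFPTEasyComparison} to $R'$ with the uniformizer $\varpi'$ gives
\[
    \ppt(f \in R') \geq \fpt\bigl(\overline{f}' \in k'\llbracket x_2, \dots, x_n\rrbracket\bigr),
\]
where $\overline{f}'$ denotes the image of $f$ modulo $\varpi'$. The residue-field inclusion $k \coloneqq V/(\varpi) \hookrightarrow k'$ induces a faithfully flat extension $k\llbracket x_2, \dots, x_n\rrbracket \hookrightarrow k'\llbracket x_2, \dots, x_n\rrbracket$ sending $\overline{f} \mapsto \overline{f}'$.

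The remaining step is to verify $\fpt(\overline{f}') = \fpt(\overline{f}) = a/p^e$. Using the standard characterization $\fpt(g) = \sup\{\nu/p^e : g^\nu \notin \fm^{[p^e]}\}$, this reduces to the observation that for each $\nu$ and $e$ one has $\overline{f}^\nu \in \fm^{[p^e]}$ in $k\llbracket x_2, \dots, x_n\rrbracket$ if and only if $(\overline{f}')^\nu \in \fm^{[p^e]}$ in $k'\llbracket x_2, \dots, x_n\rrbracket$, which is immediate from faithfully flat base change of ideals. Combining the two bounds produces the desired equality. The only point that requires (quite mild) care is this base-change invariance of $\fpt$ along a residue-field extension; everything else is a direct invocation of the two lemmas already established.
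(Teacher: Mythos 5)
Your proof is correct and matches the paper's approach exactly: the paper combines \autoref{lem.pptVsFPTEasyComparison} for the lower bound with \autoref{lem.EasyLemmaBoundWhenAddingRoots} for the upper bound, just as you do. The one extra thing you supply is the explicit justification that $\fpt$ is unchanged when passing from $V/(\varpi)\llbracket x_2,\dots,x_n\rrbracket$ to the faithfully flat extension $V'/(\varpi')\llbracket x_2,\dots,x_n\rrbracket$, a base-change step the paper leaves implicit when it cites \autoref{lem.pptVsFPTEasyComparison} directly; this is a genuine (if mild) gap you have correctly identified and filled.
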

\begin{proof}
    We know that $\ppt(f) \geq \fpt(\overline{f})$ by \autoref{lem.pptVsFPTEasyComparison}.  Now apply \autoref{lem.EasyLemmaBoundWhenAddingRoots} for the reverse inequality.
\end{proof}

Over a perfect field of characteristic $p > 0$, as any $g = f(x_1^p, \dots, x_n^p)$ is a $p$-th power, we see that $\fpt(g) \leq 1/p$.  The same holds in mixed characteristic if we also extract the $p$-th root of $p$.

\begin{cor}
\label{cor.PthPowersRamified}
    {With notation as in \autoref{lem.EasyLemmaBoundWhenAddingRoots}, assume the residue field of $V$ is perfect. Then for any $f\in V\llbracket x_2,\dots,x_n \rrbracket$, we have $\ppt(f\in V[\varpi^{1/p^e}]\llbracket x_2^{1/p^e},\dots,x_n^{1/p^e}\rrbracket)\leq 1/p^e$.}
\end{cor}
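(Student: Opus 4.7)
The plan is to reduce the bound to \autoref{lem.EasyLemmaBoundWhenAddingRoots} after passing to the power series ring in $p^e$-th root variables and exploiting perfection of $k = V/(\varpi)$. First, observe that $V' \coloneqq V[\varpi^{1/p^e}]$ is a totally ramified DVR extension of $V$ (the polynomial $T^{p^e} - \varpi$ is Eisenstein) with uniformizer $\varpi^{1/p^e}$ and unchanged residue field $k$. Set $z_i \coloneqq x_i^{1/p^e}$ so that $S \coloneqq V'\llbracket z_2, \dots, z_n\rrbracket$ is exactly the ambient ring of the claim, and note $f \in V\llbracket x_2, \dots, x_n\rrbracket \subseteq V\llbracket z_2, \dots, z_n\rrbracket$.

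We may assume $f$ is a non-unit in $S$ (otherwise the inequality is vacuous, since the constant term of $f$ in $V$ would already be a unit). The key step is then to consider the reduction $\overline{f}$ of $f$ modulo $\varpi$ inside $V\llbracket z_2, \dots, z_n\rrbracket/(\varpi) = k\llbracket z_2, \dots, z_n\rrbracket$. Since $f$ involves the variables only through $x_i = z_i^{p^e}$, the element $\overline{f}$ lies in the subring $k\llbracket z_2^{p^e}, \dots, z_n^{p^e}\rrbracket$. Because $k$ is perfect, the $e$-fold Frobenius on $k\llbracket z_2, \dots, z_n\rrbracket$ is a ring isomorphism onto this subring, so there is a unique $\overline{g} \in k\llbracket z_2, \dots, z_n\rrbracket$ with $\overline{f} = \overline{g}^{p^e}$. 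Moreover, $\overline{g}$ is a non-unit (otherwise $f$ would have a unit constant term in $V$, contradicting our assumption).

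Using the standard bound $\fpt(\overline{g}) \leq 1$ for any non-unit in a regular local ring of characteristic $p > 0$, together with the classical identity $\fpt(h^n) = \fpt(h)/n$ for a positive integer $n$, we obtain
\[
\fpt(\overline{f}) = \fpt(\overline{g}^{p^e}) = \frac{\fpt(\overline{g})}{p^e} \leq \frac{1}{p^e}.
\]
Applying \autoref{lem.EasyLemmaBoundWhenAddingRoots} with base ring $V\llbracket z_2, \dots, z_n\rrbracket$, extension $V' \supseteq V$ (which supplies the required $p^e$-th root of $\varpi$), and $a = 1$ yields $\ppt(f \in V'\llbracket z_2, \dots, z_n\rrbracket) \leq 1/p^e$, as desired.

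The only substantive input is the perfection trick that writes $\overline{f}$ as a $p^e$-th power; everything else is bookkeeping (renaming variables, verifying $V'$ is a DVR, and invoking the preceding lemma with $a=1$). No serious obstacle is anticipated.
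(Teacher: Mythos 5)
Your proof is correct and takes essentially the same route as the paper's: both arguments pass to the ring $V\llbracket x_2^{1/p^e},\dots,x_n^{1/p^e}\rrbracket$, use perfection of the residue field to write the reduction $\overline{f}$ as a $p^e$-th power there (hence $\fpt(\overline{f}) \leq 1/p^e$), and conclude by applying \autoref{lem.EasyLemmaBoundWhenAddingRoots}. Your write-up merely spells out some steps the paper leaves implicit — the variable renaming $z_i = x_i^{1/p^e}$, the Eisenstein verification that $V[\varpi^{1/p^e}]$ is a DVR, the unit edge case, and the identity $\fpt(h^{p^e}) = \fpt(h)/p^e$ together with $\fpt(\overline{g}) \leq 1$ — but the content is the same.
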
 

\begin{proof}
    {Any $f\in V\llbracket x_2,\dots,x_n \rrbracket$ admits a $p^e$-th root modulo $\varpi$ in $V\llbracket x_2^{1/p^e},\dots,x_n^{1/p^e}\rrbracket$ so that 
    $$\fpt
\left(\overline{f}\in V/(\varpi)\llbracket x_2^{1/p^e},\dots,x_n^{1/p^e}\rrbracket\right)\leq 1/p^e.$$
    Now apply \autoref{lem.EasyLemmaBoundWhenAddingRoots} to $f\in V\llbracket x_2^{1/p^e},\dots,x_n^{1/p^e}\rrbracket$.}
\end{proof}

Combining \autoref{lem.pptVsFPTEasyComparison} and \autoref{lem.EasyLemmaBoundWhenAddingRoots} we obtain the following limiting statement.

\begin{corollary}
\label{cor.LimitingPPTStatement}
    Suppose $(V, \varpi)$ is a mixed characteristic $(0,p>0)$ complete DVR and $R = V\llbracket x_2, \dots, x_n\rrbracket$ has maximal ideal $\fm$.  Suppose $f \in \fm$ with corresponding $\overline{f} \in V/(\varpi)\llbracket x_2, \dots, x_n\rrbracket$.  Then 
    \[
        \lim_{e \to \infty} \ppt(f \in V[\varpi^{1/p^e}]\llbracket x_2, \dots, x_n \rrbracket) = \fpt(\overline{f} \in V/(\varpi)\llbracket x_2, \dots, x_n \rrbracket).
    \]
\end{corollary}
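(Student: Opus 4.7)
The plan is to deduce the limit from the two preceding lemmas, which already bracket $\ppt(f \in V[\varpi^{1/p^e}]\llbracket x_2, \dots, x_n \rrbracket)$ between $\fpt(\overline{f})$ below and a number approaching $\fpt(\overline{f})$ above.

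First I would set up the lower bound. Since $V[\varpi^{1/p^e}]$ is a totally ramified extension of $V$ (with uniformizer $\varpi^{1/p^e}$) whose residue field equals that of $V$, the reduction
\[
V[\varpi^{1/p^e}]/(\varpi^{1/p^e}) \llbracket x_2, \dots, x_n\rrbracket \;=\; V/(\varpi)\llbracket x_2, \dots, x_n\rrbracket
\]
is unchanged, so applying \autoref{lem.pptVsFPTEasyComparison} with $\varpi^{1/p^e}$ in the role of $\varpi$ yields
\[
\ppt\!\bigl(f \in V[\varpi^{1/p^e}]\llbracket x_2, \dots, x_n \rrbracket\bigr) \;\geq\; \fpt(\overline{f})
\]
for every $e \geq 0$.

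Next I would handle the upper bound by an approximation argument. Fix $\epsilon > 0$, and choose $e_0$ with $1/p^{e_0} < \epsilon$. Setting $a \coloneqq \lceil p^{e_0} \fpt(\overline{f}) \rceil$, we obtain a dyadic approximation
\[
\fpt(\overline{f}) \;\leq\; \frac{a}{p^{e_0}} \;<\; \fpt(\overline{f}) + \epsilon.
\]
For any $e \geq e_0$, the DVR $V[\varpi^{1/p^e}]$ contains a $p^{e_0}$-th root of $\varpi$ (namely $(\varpi^{1/p^e})^{p^{e-e_0}}$), so \autoref{lem.EasyLemmaBoundWhenAddingRoots} applied with exponent $e_0$ gives
\[
\ppt\!\bigl(f \in V[\varpi^{1/p^e}]\llbracket x_2, \dots, x_n \rrbracket\bigr) \;\leq\; \frac{a}{p^{e_0}} \;<\; \fpt(\overline{f}) + \epsilon.
\]

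Combining the two bounds, for every $\epsilon > 0$ there exists $e_0$ such that $\fpt(\overline{f}) \leq \ppt(f \in V[\varpi^{1/p^e}]\llbracket x_2, \dots, x_n \rrbracket) < \fpt(\overline{f}) + \epsilon$ for all $e \geq e_0$, which is precisely the claimed limit. There is no real obstacle here: the only subtlety worth spelling out is the residue-field invariance used in the lower bound, which ensures the $\fpt$ in the lemma is literally $\fpt(\overline{f})$ rather than some $\fpt$ computed over a different residue field, and the verification that $V[\varpi^{1/p^e}]$ contains a compatible $p^{e_0}$-th root of $\varpi$ for $e\geq e_0$, which is immediate by construction.
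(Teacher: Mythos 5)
Your proof is correct and matches the paper's intended argument exactly: the paper presents the corollary with the one-line justification "Combining \autoref{lem.pptVsFPTEasyComparison} and \autoref{lem.EasyLemmaBoundWhenAddingRoots}," and your squeeze argument (lower bound from the mod-$\varpi^{1/p^e}$ reduction, upper bound from a $p$-power-denominator approximation of $\fpt(\overline{f})$ fed into \autoref{lem.EasyLemmaBoundWhenAddingRoots}) is precisely the spelled-out version of that combination. One tiny terminological note: "dyadic approximation" should be "$p$-adic" (or "$p$-power denominator") approximation, since $p$ need not be $2$.
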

Unlike the case when $\fpt(\overline{f}) = a/p^e$, this limit does not always stabilize after finitely many steps, as the following example shows. 

\begin{example}
\label{exam.LimitingValueNonStabilizeExample}
    Let $p$ be an odd prime and $f = p^2 + x^2 \in \bZ_p \llbracket x \rrbracket$. For each $e>0$, set $R_e \coloneqq \bZ_p[p^{1/p^e}]\llbracket x \rrbracket$.  By \autoref{l-blow-up}(i), we obtain that
    \[
        \ppt(f \in R_e) \geq \fpt(y^{2p^e} + x^2 \in \bF_p\llbracket x,y\rrbracket)
    \]
    for all $e > 0$.  We compute the right side. 
    Note that $1/2$ can be written as $\sum_{i \geq 1} (p-1)/2p^i$ and $1/(2p^e)$ is $p^{-e}\sum_{i \geq 1} (p-1)/2p^i$, therefore \autoref{t-HerDiagHyp} guarantees that 
    \begin{comment}
    The base-$p$ expansion of $1/2$ modulo $p$ is $0.\overline{ddd}$ where $d = {p-1 \over 2}$, and the base-$p$ expansion of $1/(2p^e)$ is $p^{-e} \cdot 0.\overline{ddd}$.  As these numbers add without carrying, \autoref{t-HerDiagHyp} guarantees that 
    \end{comment}
    \[
      \fpt(y^{2p^e} + x^2 \in \bF_p\llbracket x,y\rrbracket) = 1/(2p^e) + 1/2.
    \]
    Hence $\ppt(f \in R_e) > 1/2$ for all $e > 0$.  
    But $\fpt(\overline{f}) = 1/2$ and so the limiting value of $\ppt(f \in R_e)$ is \textit{never} achieved at any finite level.    

    Other examples work similarly, for instance $f = p^3 + y^3 + z^3 \in \bZ_p\llbracket y,z\rrbracket$ for $p \equiv_3 1$.
\end{example}

\begin{remark}
    We do not know if there is an example similar to that of  \autoref{exam.LimitingValueNonStabilizeExample} whose equation does not have an explicit $p$ in it (for instance, such that $\lct(f \in \ZZ_p[p^{1/p^e},x_2, \dots, x_n])$ is constant as $e$ varies).  For instance, if $p = 3$, then $\fpt(x^4+y^4+z^4+x^2y^2z^2 \in \bZ_p\llbracket x,y,z\rrbracket) = \frac{1}{2}$, while the $\lct$ of the same equation is equal to $3/4$ (see \cite{CantonHernandezSchwedeWitt.AtTheFP}). Hence from \autoref{lem.EasyLemmaBoundWhenAddingRoots}, we see that 
    \[
        {1 \over 2} = \lim_{e \to \infty} \ppt(x^4+y^4+z^4+x^2y^2z^2 \in \ZZ_p\llbracket p^{1/p^e}, x,y,z \rrbracket).
    \]
    But we do not know if this limit is achieved at a finite level.

    Similar potential examples to explore can be constructed from \cite[Example 4.5]{MustataTakagiWatanabeFThresholdsAndBernsteinSato} (for instance, $x^5 + y^4 + x^3y^2$ in characteristic $p \equiv_{20} 19$).
\end{remark}

Even without an explicit $p$ in the equation, we see that the $\ppt$ of common hypersurfaces can change quite dramatically based upon ramification:

\begin{example}[Yoshikawa]\label{yoshikawa}
    Yoshikawa proved that various hypersurface equations are perfectoid pure in \cite[Example 6.10]{yoshikawa2025computationmethodperfectoidpurity}.  For instance, set $A = \bZ_p\llbracket x,y,z \rrbracket, f = x^3+y^3+z^3$ and $R = A/(f)$. Yoshikawa proves that $R$ is perfectoid pure for $p \equiv_3 2$.  Hence by \autoref{prop.PPTWhenHypersurfaceIsPerfdPure}, we see that 
    \[
        \ppt(f \in \bZ_p\llbracket x,y,z \rrbracket) = 1.
    \] 
    But now as $\overline{f} = x^3+y^3+z^3 \in \bF_p[x,y,z]$ has $\fpt(\overline{f}) = 1-1/p$ for $p \equiv_3 2$ by \cite{BhattSinghThresholds}, we see that 
    \[
        \ppt(f \in \bZ_p[p^{1/p}]\llbracket x,y,z \rrbracket) = 1-1/p
    \]
    by \autoref{cor.EqualityOnTheNoseAfterRamifying}. We conclude that $\bZ_p[p^{1/p}]\llbracket x,y,z\rrbracket/(f)$ is \emph{not} perfectoid pure, thanks to \autoref{rem.WeakenHypothesesOfPPT}.
\end{example}

\begin{corollary}
\label{cor.EqualityOnTheNose}
    Suppose $R = V\llbracket x,y,z\rrbracket$ where $(V,\varpi)$ is a DVR containing a $p$-th root of $p$. Let $f$ be a homogeneous degree 3 equation in $x,y,z$ so that $\overline{f} \in V/(\varpi)\llbracket x,y,z\rrbracket$ defines a nonsingular elliptic curve $E$. Then 
    \[
        \ppt(f) = \fpt(\overline{f}) = \left\{ \begin{array}{ll} 1 & \text{if $E$ is ordinary,}\\ 1-1/p & \text{if $E$ is supersingular}. \end{array}\right.
    \]
\end{corollary}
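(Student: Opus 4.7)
The plan is to combine the classical $F$-pure threshold computation for a cubic defining an elliptic curve with the ramification principle of \autoref{cor.EqualityOnTheNoseAfterRamifying}. Writing $k = V/(\varpi)$ for the residue field of characteristic $p$, the reduction $\overline{f} \in k\llbracket x,y,z\rrbracket$ is a homogeneous cubic cutting out (the affine cone over) $E$. Fedder's criterion, together with the standard identification of the coefficient of $(xyz)^{p-1}$ in $\overline{f}^{p-1}$ with a unit multiple of the Hasse invariant of $E$, yields $\fpt(\overline{f}) = 1$ in the ordinary case and $\fpt(\overline{f}) = (p-1)/p$ in the supersingular case; this is classical and can be cited from \cite{BhattSinghThresholds}.

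For the ordinary case the argument is immediate: \autoref{lem.pptVsFPTEasyComparison} gives $\ppt(f) \geq \fpt(\overline{f}) = 1$, and the reverse inequality $\ppt(f) \leq 1$ follows from the description in \autoref{lem.AlternateDescriptionPPT} since $f \in \fm$ forces $f^1 \in \fm R^+$. For the supersingular case, the expansion $\fpt(\overline{f}) = (p-1)/p$ terminates in base $p$ after one step, with numerator $a = p-1$ and exponent $e = 1$. Since $V$ contains a $p$-th root of $p$ by hypothesis, the setup of \autoref{cor.EqualityOnTheNoseAfterRamifying} is satisfied with $V$ in the role of the ramified extension $V'$ of $\ZZ_p$. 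That corollary then delivers $\ppt(f) = \fpt(\overline{f}) = 1 - 1/p$ on the nose.

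I anticipate no serious obstacle, as essentially all of the technical work has already been packaged into earlier statements in the excerpt. The only bookkeeping is to match uniformizer conventions: the base DVR playing the role of $V$ in \autoref{cor.EqualityOnTheNoseAfterRamifying} is $\ZZ_p$ with uniformizer $p$, and the larger DVR $V'$ there is our $V$, for which the prescribed $p$-th root of $p$ supplies exactly the ramification called for by the case $e = 1$.
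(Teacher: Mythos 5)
Your argument follows the paper's own proof essentially verbatim: the lower bound comes from \autoref{lem.pptVsFPTEasyComparison}, the ordinary case is dispatched by the trivial upper bound $\ppt(f)\le 1$, and the supersingular case is delegated to \autoref{cor.EqualityOnTheNoseAfterRamifying} together with the Bhatt--Singh computation $\fpt(\overline f)=1-1/p$ from \cite{BhattSinghThresholds}.

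One small point of bookkeeping deserves care, and your write-up is actually a bit more committal about it than the paper. To invoke \autoref{cor.EqualityOnTheNoseAfterRamifying} you must exhibit a sub-DVR $V_0\subseteq V$ with uniformizer $\varpi_0$ such that $f\in V_0\llbracket x,y,z\rrbracket$ and $V$ contains $\varpi_0^{1/p}$; you take $V_0=\ZZ_p$. This is fine when the residue field is $\FF_p$ and the cubic has $\ZZ_p$-coefficients, but for a general residue field $k=V/(\varpi)$ the base should be $W(k)$, and more importantly the cited corollary only controls $\ppt$ of an $f$ defined over that smaller unramified subring: for a cubic with genuinely $V$-coefficients the proof of \autoref{lem.EasyLemmaBoundWhenAddingRoots} lands $f^{a/p}$ in $(x,y,z,\varpi^{1/p})R^+$, which is strictly larger than $(x,y,z,\varpi)R^+$ and hence does not directly give the upper bound in $V\llbracket x,y,z\rrbracket$. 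The paper's own one-line justification shares this implicit assumption, so your proposal is faithful to the source; just be aware that the clean application requires the cubic (up to harmless coordinate change) to live over $W(k)$.
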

\begin{proof}
    We always have $\ppt(f \in V\llbracket x,y,z\rrbracket) \geq \fpt(\overline{f}\in V/(\varpi)\llbracket x,y,z\rrbracket)$ by \autoref{lem.pptVsFPTEasyComparison}.
    
    If $E$ is ordinary, then $\fpt (\overline{f} \in V/(\varpi)\llbracket x,y,z\rrbracket) = 1$.  But $1$ is an upper bound on $\ppt$ and hence we have equality.  

    In the supersingular case, simply apply the fact that $\fpt(\overline{f}) = 1-1/p$ (\cite{BhattSinghThresholds}) and \autoref{cor.EqualityOnTheNoseAfterRamifying}.
\end{proof}

\subsection{Diagonal hypersurfaces}
In the spirit of \autoref{cor.EqualityOnTheNoseAfterRamifying}, we prove the following general statement which in particular shows that the plus-pure threshold of Calabi-Yau/Fermat type hypersurfaces involving a high enough $p$-th root of $p$ in the equation coincides with the $F$-pure threshold.

\begin{lemma} \label{l-homogeneoushyp_ramification}
Let $k$ be a perfect field of characteristic $p>0$.
Suppose $R = W(k)\llbracket x_2, \dots, x_n\rrbracket$.
Let $a \in \NN_{\geq 1}$ and define $R_a \coloneqq W(k)[p^{1/p^a}]\llbracket x_2,\dots, x_n \rrbracket$.
Let $s_1, \dots, s_n \in \ZZ_{>1}$, fix $f_a \coloneqq p^{s_1/p^a}+x_2^{s_2}+ \dots+x_n^{s_n} \in R_a$ and set $f_0 \coloneqq x_1^{s_1}+x_2^{s_2}+ \dots+x_n^{s_n} \in k\llbracket x_1, \dots, x_n\rrbracket$.
We follow the notation of \autoref{t-HerDiagHyp} and set ${1 \over s_i} = \sum_{e\geq 1}p^{-e}s_{i,e}$ with the $0 \leq s_{i,e} \leq p-1$ not all eventually zero\footnote{a non-terminating base-$p$ expansion of $1 \over s_i$}, assume that $L\coloneqq \min \{ e\geq 0 : \sum_{i=1}^n s_{i,e+1} \geq p \} < \infty$. 
Assume
\begin{itemize}
    \item[(a)] all the $s_i$'s are not powers of $p$, or
    \item[(b)] all the $s_i$'s are powers of $p$.
\end{itemize}
Then 
$$\ppt(f_a)\leq \fpt(f_0) \; \text{for all}\; a \geq L.$$
In particular, if $n=2$ or $s_1=\dots=s_n$, then $\ppt(f_a) = \fpt(f_0)$ for all $a \geq L$.
\end{lemma}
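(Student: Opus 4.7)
The plan is to establish $\ppt(f_a) \leq N/p^L = \fpt(f_0)$, where $\alpha_i \coloneqq \sum_{e=1}^{L} p^{L-e} s_i^e$ and $N \coloneqq \sum_{i=1}^n \alpha_i + 1 = p^L \cdot \fpt(f_0)$ by \autoref{t-HerDiagHyp}. Setting $\varpi \coloneqq p^{1/p^a}$, the uniformizer of $R_a$, my plan is to prove the key membership
\[
f_a^N \in (\varpi^{p^L}, x_2^{p^L}, \dots, x_n^{p^L}) R_a.
\]
Once this is done, \autoref{l-lemma2.3}(ii) applied with $\epsilon = 1$ and $p^e = p^L$ yields $f_a^{N/p^L} \in (\varpi, x_2, \dots, x_n) R_a^+ = \fm R_a^+$, and then \autoref{lem.AlternateDescriptionPPT} forces $\ppt(f_a) \leq N/p^L$, as desired.

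To verify this membership, I would expand $f_a^N = (\varpi^{s_1} + x_2^{s_2} + \dots + x_n^{s_n})^N$ via the multinomial theorem and check that each summand $\binom{N}{j_1, \dots, j_n}\,\varpi^{s_1 j_1} x_2^{s_2 j_2}\cdots x_n^{s_n j_n}$ lies in $I \coloneqq (\varpi^{p^L}, x_2^{p^L}, \dots, x_n^{p^L})$. Summands with $s_i j_i \geq p^L$ for some $i \geq 2$ are immediate. For the rest, the non-terminating base-$p$ expansion convention for $1/s_i$ forces $j_i \leq \alpha_i$ whenever $s_i j_i < p^L$ and $i \geq 2$, so
\[
j_1 \geq N - \sum_{i=2}^n \alpha_i = \alpha_1 + 1,
\]
and a short arithmetic check shows $s_1 j_1 \geq s_1(\alpha_1+1) \geq p^L$. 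Thus whenever $\binom{N}{j_1, \dots, j_n}$ is a $p$-adic unit, the term already lies in $I$.

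The crucial subcase is when $p$ divides $\binom{N}{j_1, \dots, j_n}$, and here the hypothesis $a \geq L$ enters decisively: since $\varpi^{p^a} = p$ up to a unit, a single factor of $p$ contributes at least $p^a \geq p^L$ to the $\varpi$-adic valuation of the term, which combined with the positive contribution from $\varpi^{s_1 j_1}$ again places the term in $I$. Finally, for the \emph{in particular} equality, I would combine the upper bound just proved with the lower bound $\fpt(f_0) \leq \ppt(f_a)$ supplied by \autoref{l-blow-up}(i) when $n = 2$ and by \autoref{l-blow-up}(ii) when $s_1 = \cdots = s_n$. The main obstacle I expect is the term-by-term bookkeeping in the degenerate case when some $s_i$ is a power of $p$ dividing $p^L$, where $s_i(\alpha_i+1) = p^L$ exactly; here one must be careful with the distinction between $\lfloor p^L/s_i \rfloor = \alpha_i$ and $\alpha_i + 1$, and the non-terminating convention for the base-$p$ expansion of $1/s_i$ is precisely what keeps all the inequalities going the right way.
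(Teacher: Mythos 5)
Your approach is essentially identical to the paper's: establish the membership $f_a^{N} \in (\varpi^{p^L}, x_2^{p^L}, \dots, x_n^{p^L})R_a$ by multinomial expansion, pass to $p^L$-th roots via \autoref{l-lemma2.3}(ii), and obtain the lower bound from \autoref{l-blow-up}. Your combinatorial bookkeeping is also correct, and in fact your handling of the edge case $s_i = p^{m_i}$ with $m_i \leq L$ (noting that $\alpha_i = p^{L-m_i}-1$ whereas $\lfloor p^L/s_i \rfloor = p^{L-m_i}$) is more careful than what appears in the paper, where the identity $\lfloor p^L/s_i\rfloor = \sum_{e=1}^L p^{L-e}s_i^e$ is asserted without accounting for this degenerate case; the bound $j_i \leq \alpha_i$ whenever $s_i j_i < p^L$ is the precise statement that is needed, and you derive it correctly.

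Two points of confusion, however. First, your ``crucial subcase'' about $p$ dividing the multinomial coefficient is superfluous: the pigeonhole argument you give immediately beforehand (some $i \geq 2$ has $s_ij_i \geq p^L$, or else $j_1 \geq \alpha_1 + 1$ and hence $s_1 j_1 \geq p^L$) already shows that \emph{every} term of the expansion lies in $I$, with no restriction on the coefficient whatsoever. The qualifier ``whenever $\binom{N}{j_1,\dots,j_n}$ is a $p$-adic unit'' is not playing any role, so the subsequent subcase analysis is vacuous. Second, and more importantly, this causes you to misattribute where the hypothesis $a \geq L$ actually enters. It is \emph{not} needed for the membership $f_a^N \in I$; that is purely combinatorial. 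It is needed when you invoke \autoref{l-lemma2.3}(ii) with the role of $\varpi$ (in the notation of that lemma) played by $p^{p^L/p^a}$ and $\epsilon = 1$: the lemma requires that this element divide the prime $p$, which holds precisely when $p^L/p^a \leq 1$, i.e.\ $a \geq L$. You quietly apply the lemma ``with $\epsilon = 1$ and $p^e = p^L$'' without flagging this, while instead crediting $a \geq L$ to the unnecessary coefficient discussion. The conclusion is unharmed since you do assume $a \geq L$, but the logical attribution should be corrected.
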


\begin{proof}
We first prove the lemma in Case (a).
The condition $s_i>1$ for every $i$ implies that there exists $e \geq L+1$ such that $s_{i,e} < p-1$. Indeed, if not, let $N$ be the minimum index such that $s_{i,e} = p-1$ for all $e \geq N+1$. Since $s_i>1$, then $N>0$. Then $s_i = \frac{p^N}{\sum_{e=1}^{N-1} p^{N-e}s_{i,e} + s_{i,N}+1}$ and the denominator is coprime with $p$, therefore $s_i$ cannot be an integer, which is a contradiction.
    
Since for every $i$ there exists $e \geq L+1$ such that $s_{i,e} < p-1$,
    \[
    \lfloor p^L/s_i \rfloor = \sum_{e=1}^L p^{L-e}s_{i,e}.
    \]
Denote by $a_L \coloneqq \sum_{e=1}^L \sum_{i=1}^n p^{L-e} s_{i,e} +1$.
    By \autoref{t-HerDiagHyp}, 
    \[
    \fpt(f_0) = \frac{1}{p^L} \left(\sum_{e=1}^L \sum_{i=1}^n p^{L-e} s_{i,e} +1\right)= a_L/p^L.
    \]
Let us compute $f_a^{a_L}$:
   \[
   f_a^{a_L} = \sum_{\ell_1+ \dots+\ell_n = a_L} c_{\ell} p^{s_1 \ell_1/ p^a} \cdot x_2^{s_2 \ell_2} \cdot \dots \cdot x_{n}^{s_{n} \ell_{n}};
   \]
   where $c_{\ell}$ are the multinomial coefficients $\binom{a_L}{\ell_1\, \ell_2 \, \dots\, \ell_n}$.
   We claim that in all the monomials in the above expressions there is at least one index $i$ such that $s_i \ell_i \geq p^L$.
Indeed, if this was not the case, then for all $i$ we would have $\ell_i \leq \lfloor p^L/s_i \rfloor$.
Therefore, by the computation above, $\sum_{i=1}^n  \lfloor p^L/s_i \rfloor < a_L$, which is a contradiction.
In particular,
$f_a^{a_L} \in (x_2^{p^L}, \dots, x_n^{p^L}, p^{p^L/p^a})$.
By \autoref{l-lemma2.3}(ii), if $a \geq L$, we conclude that
$f_a^{a_L/p^L} \in (x_2, \dots, x_n, p^{1/p^a})R_a^+$.

   We now prove Case (b). For all $i$, let $s_i = p^{e_i}$ for some integer $e_i \geq 1$. 
   First, let us order the indices $i_1, \dots, i_n$ such that $e_{i_1} \leq e_{i_2} \leq \dots \leq e_{i_n}$ (note that we cannot simply reorder the indices so that $e_1 \leq e_2 \leq \dots \leq e_n$ because the equation is not symmetric in the first variable).
   Then $L=e_{i_2}$ and, if we set 
   $a_L:= \sum_{e=1}^L \sum_{i=1}^n p^{L-e} s_{i,e} +1$, by \autoref{t-HerDiagHyp}, 
    \[
    \fpt(f_0) = \frac{1}{p^L} \left(\sum_{e=1}^L \sum_{i=1}^n p^{L-e} s_{i,e} +1\right)= a_L/p^L.
    \]
Now, let us compute $f_a^{a_L}$:
   \[
   f_a^{a_L} = \sum_{\ell_1+ \dots+\ell_n = a_L} c_{\ell} p^{s_1 \ell_1/ p^a} \cdot x_2^{s_2 \ell_2} \cdot \dots \cdot x_{n}^{s_{n} \ell_{n}};
   \]
   where $c_{\ell}$ are the multinomial coefficients $\binom{a_L}{\ell_1\, \ell_2 \, \dots\, \ell_n}$.
   We claim that in all the monomials in the above expression, there is at least one index $i$ such that $s_i \ell_i \geq p^L$.
    Indeed, if this was not the case then we would have $\ell_{i_j} < p^{e_{i_2}-e_{i_j}}$, which implies that $\ell_{i_2}=\dots=\ell_{i_n}=0$, since the $\ell_{i_j}$'s are integers and $p^{e_{i_2}-e_{i_1}}>\ell_{i_1}=a_L$.
    If $e_{i_1}=e_{i_2}$, this forces $\ell_{i_1}=0$, contradiction.
    
    If $e_{i_1}<e_{i_2}$, note that, for $j \geq 2$, $s_{i_j,e}=0$ for all $e \leq e_{i_2}$, therefore
    \[
    a_L= \sum_{e=e_{i_1}+1}^{e_{i_2}} (p-1)p^{e_{i_2}-e} +1 = p^{e_{i_2}-e_{i_1}},
    \]
    whence we derive the contradiction
    $$
    p^{e_{i_2}-e_{i_1}}>\ell_{i_1}=a_L=p^{e_{i_2}-e_{i_1}}.
    $$
    In particular, $f_a^{a_L} \in (x_2^{p^L}, \dots, x_n^{p^L}, p^{p^L/p^a})$.
By \autoref{l-lemma2.3}(ii), if $a \geq L$, we get
   $f_a^{a_L/p^L} \in (x_2, \dots, x_n, p^{1/p^a})R_a^+$.
   As for the  ``In particular'' part, we conclude by \autoref{l-blow-up}.
\end{proof}

\begin{remark}
With notation as in \autoref{l-homogeneoushyp_ramification} now suppose $L= \infty$.  If additionally either $n=2$ or $f_0$ is a homogeneous polynomial, then $\ppt(f_a)=\fpt(f_0)=\lct(f_a)$ for all $a \geq 0$ by \autoref{l-blow-up}.
\end{remark}

\begin{remark}
With notation as in \autoref{l-homogeneoushyp_ramification}, if $L< \infty$, we expect the equality $\ppt(f_a) = \fpt(f_0)$ for $a \geq L$ to hold also when $f_0$ is non-homogeneous. However, the arguments in \cite[Lemma 2.8]{CaiPandeQuinlanGallegoSchwedeTuckerpluspurethresholdscusplikesingularities} become more involved in higher dimension and so we do not work out the details.
\end{remark}

\begin{example}
    When $p$ does not appear in the equation, we can compute the plus-pure threshold by applying \autoref{cor.EqualityOnTheNoseAfterRamifying} to the case of diagonal hypersurfaces, even the non-homogeneous ones.
    Suppose $R = W(k)\llbracket x_1, \dots, x_n\rrbracket$, where $k$ is a perfect field of characteristic $p>0$. 
    Let $f= \sum_{i=1}^n x_i^{s_i}$ and write $\frac{1}{s_i} = \sum p^{-e}s_{i,e}$ so that $s_{i,e}$ are not eventually zero and $0 \leq s_{i,e} \leq p-1$.
Define $L\coloneqq \min \{ e\geq 0 : \sum_{i=1}^n s_{i,e+1} \geq p \}$.
Assume $L < \infty$.
Let $\overline{f}$ denote the image of $f$ in $R/(p) = k\llbracket x_1, \dots, x_n\rrbracket$.
Then, by \autoref{t-HerDiagHyp}, 
    \[
    \fpt(\overline{f}) = \frac{1}{p^L} \left(\sum_{e=1}^L \sum_{i=1}^n p^{L-e}s_{i,e} +1\right)=: a_L/p^L.
    \]
    Let $V \supseteq W(k)$ be a DVR containing some $p^L$-th root of $p$.  Then $\ppt(f \in V\llbracket x_2, \dots, x_n\rrbracket) = {a_L/p^L}$ by \autoref{cor.EqualityOnTheNoseAfterRamifying}.
\end{example}

\subsection{Computations at the finite level}\label{subsection:computations_at_the_finite_level.}

Since $\ppt(f) = \sup \{t \in \QQ_{>0} \;|\; f^t\notin \fm R^+\}$,
it is natural to check whether one can obtain information about the plus-pure threshold by studying the normalization of the ring $R[f^{1/p^e}]$. We obtain the following results: \autoref{thm:intermediate} and \autoref{thm:2}, which yield an upper bound of $1/p$ and $1-1/p$ respectively if there is tame ramification over $p$ in codimension one. Here are some explicit examples to keep in mind (see \autoref{cor:URLR_pthrootunity}, \autoref{rem:explicit} and \autoref{rem:URLR}):
 \begin{enumerate}
         \item Let $R=\ZZ_p[\zeta]\llbracket x_2,\dots,x_d\rrbracket$, where $\zeta$ is a primitive $p$-th root of unity. For any $f\in R$ admitting a $p$-th root modulo $(\zeta-1)^p$, we have $\ppt(f)\leq 1/p$.
          \item Let $R=\ZZ_p\llbracket x_2,\dots,x_d\rrbracket$. For any $f\in R$ admitting a $p$-th root modulo $p^2$, we have $\ppt(f)\leq 1-1/p$.
 \end{enumerate}

 Notice that the two examples above coincide for the special case $p=2$. Notice also that in the setting of (a), if $f$ admits a linear $p$-th root modulo $(\zeta-1)^p$ (for example $p=2$ and $f=(x_2+\dots+x_d)^2 + 4a$ for some $a\in R$), then $\ppt(f)=1/p$ $(=1/2$). Indeed, this follows by combining the above with the lower bound coming from the mod $p$ reduction (\autoref{lem.pptVsFPTEasyComparison}).

We now explain the connection of \autoref{thm:intermediate} and \autoref{thm:2} with \autoref{lem.EasyLemmaBoundWhenAddingRoots}. Suppose $R = V\llbracket x_2, \dots, x_n\rrbracket$ where $(V,\varpi)$ is a mixed characteristic $(0,p>0)$ complete DVR with uniformizer $\varpi$. Consider the subring $R^p$ of $R$ of elements that admit a $p$-th root modulo $\varpi$. Any $f\in R^p$ satisfies $\fpt(\overline{f})\leq 1/p$. Therefore, \autoref{lem.EasyLemmaBoundWhenAddingRoots} implies that $\ppt(f\in V[\varpi^{1/p}]\llbracket x_2, \dots, x_n\rrbracket)\leq 1/p$. \autoref{thm:intermediate} and \autoref{thm:2} can be viewed as providing analogous bounds for $f\in R^p$ without passing to $V[\varpi^{1/p^e}]\llbracket x_2, \dots, x_n\rrbracket$ under the stronger condition that $f$ admits a $p$-th root modulo certain higher powers of $\varpi$. For example, any $f\in \ZZ_p[\zeta]\llbracket x_2, \dots, x_n\rrbracket$ that admits a $p$-th root modulo $(\zeta-1)^p$ (which is a stronger condition than admitting a $p$-th root modulo $(\zeta-1)$), one has $\ppt(f\in \ZZ_p[\zeta]\llbracket x_2, \dots, x_n\rrbracket)\leq 1/p$.

 \begin{theorem}\label{thm:intermediate}
        Let $(S,\m)$ be a regular local ring of mixed characteristic $(0,p> 0)$ containing a primitive $p$-th root of unity and such that the irreducible components of $S/(p)$ are normal. Let $\{q_1,\dots,q_s\}$ be the prime divisors of $p\in S$. If $0\neq f\in \m$ is such that $S\to \overline{S[f^{1/p}]}$ ($\overline{*}$ is normalization) is tamely ramified (in particular, \'{e}tale) in codimension one over $q_i$ for some $1\leq i\leq s$, then $\ppt(f)\leq 1/p$.
    \end{theorem}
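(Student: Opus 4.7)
By \autoref{lem.AlternateDescriptionPPT}, it suffices to exhibit $f^{1/p}\in \m S^+$; setting $T\coloneqq \overline{S[f^{1/p}]}\subseteq S^+$ and using $\m T\subseteq \m S^+$, the task reduces to showing $f^{1/p}\in \m T$. We may assume the extension has degree $p$: if $f$ is a $p$-th power in $S$, then $f^{1/p}\in S\cap \m$ by primality of $\m$, and the claim is immediate. Fix a prime $Q$ of $T$ lying over $q_i$. Since the extension has degree $p$ and the ramification index at $Q$ must be both a divisor of $p$ and coprime to $p$ (by tameness), it equals one; hence $S_{q_i}\to T_Q$ is étale, and in particular $\kappa(Q)/\kappa(q_i)$ is separable.

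The image $\overline{f^{1/p}}\in \kappa(Q)$ satisfies $t^p=\bar f\in \kappa(q_i)$. In characteristic $p$ any such polynomial has a unique root in any extension of $\kappa(q_i)$, so the subfield $\kappa(q_i)(\overline{f^{1/p}})$ is either $\kappa(q_i)$ itself or purely inseparable of degree $p$. Since it lies inside the separable extension $\kappa(Q)/\kappa(q_i)$, only the former is possible, and consequently $\bar f$ admits a $p$-th root $\bar g\in \kappa(q_i)$ with $\overline{f^{1/p}}=\bar g$ in every residue field $\kappa(Q)$ for $Q\mid q_i$. The hypothesis that the irreducible components of $S/(p)$ are normal ensures that $S/q_i$ is a normal domain with fraction field $\kappa(q_i)$; hence $\bar g$, being integral over $S/q_i$ via the monic relation $t^p=\bar f$, lies in $S/q_i$. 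Fix a lift $g\in S$; then $g^p-f\in q_iS\subseteq \m$ together with $f\in \m$ gives $g^p\in \m$, so $g\in \m$ by primality of $\m$.

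It remains to show $f^{1/p}-g\in q_iT$. Write $q_i=(\pi)$. The ring $T$ is a normal domain, as $t^p-f$ is irreducible over $\mathrm{Frac}(S)$ (using normality of $S$ and the reduction that $f$ is not a $p$-th power) and $T$ is the normalization of the domain $S[t]/(t^p-f)$. The height-one primes of $T$ containing $\pi$ are exactly those lying over $q_i$ (by going-down for the finite map $S\to T$ with $S$ normal), and at each such $Q$ étaleness forces $v_Q(\pi)=1$. Because $T$ is normal and $\pi$ is a non-zero-divisor, $\pi T$ has no embedded primes, so $\pi T=\bigcap_{Q\mid q_i}Q$ is radical and $T/\pi T$ is reduced. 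The class $\overline{f^{1/p}-g}$ vanishes in every $\kappa(Q)$ by the previous paragraph, so reducedness gives $f^{1/p}-g\in \pi T=q_iT$. Therefore $f^{1/p}=g+(f^{1/p}-g)\in (g,\pi)T\subseteq \m T$, completing the proof.

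The crucial step is the characteristic-$p$ dichotomy at the start of the second paragraph: the combination of uniqueness of $p$-th roots in characteristic $p$ with separability of $\kappa(Q)/\kappa(q_i)$ forces $\overline{f^{1/p}}\in \kappa(q_i)$. This is the point at which the tame/étale hypothesis is truly used; mere unramifiedness, without separability of the residue extension, would not suffice (as the example $f=x\in \ZZ_2\llbracket x\rrbracket$ at $q=(2)$ shows, where the Kummer extension is unramified with inseparable residue field yet $\ppt(x)=1$). The remaining descent combines standard normality facts: normality of $T$, unmixedness of $\pi T$ in $T$, and the identification of height-one primes of $T$ containing $\pi$ with primes over $q_i$.
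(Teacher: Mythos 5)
Your proof is correct in substance but takes a genuinely different route from the paper's, and it has one unjustified step that deserves to be flagged.

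The paper's proof invokes the numerical criterion of Katz--Sridhar to deduce $\Gamma_{q_iS}(f)\geq p$ (i.e., $f$ has a $p$-th root modulo $q_i^p$), and then applies an Artin--Schreier-style transformation to produce an explicit integral element whose existence shows $f^{1/p}\in\m T$. You instead argue entirely at the level of residue fields: tameness gives a separable residue extension, uniqueness of $p$-th roots in characteristic $p$ then forces $\overline{f^{1/p}}\in\kappa(q_i)$, normality of $S/q_i$ (the "irreducible components of $S/(p)$ normal" hypothesis) lifts the $p$-th root to $g\in S$, and finally the fact that $\pi T$ is a radical ideal (because each $v_Q(\pi)=1$ and $T$ is normal) yields $f^{1/p}-g\in\pi T$. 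This is shorter and more conceptual, and it only needs a $p$-th root of $\bar f$ modulo $q_i$ rather than modulo $q_i^p$; the stronger congruence is replaced by reducedness of $T/\pi T$.

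The gap is in the sentence asserting that the ramification index at $Q$ "must be a divisor of $p$." For an arbitrary degree-$p$ extension of Dedekind domains this is simply false: one can have $e_Q$ equal to any integer $\leq p$, constrained only by $\sum_Q e_Qf_Q=p$. What makes it true here is that $K(S)(f^{1/p})/K(S)$ is a cyclic Galois extension of degree $p$, by Kummer theory -- and this is precisely where the hypothesis that $S$ contains a primitive $p$-th root of unity enters. Once the extension is Galois, the (tame) inertia group $I_Q$ has order $e_Q$ and is a subgroup of $\ZZ/p\ZZ$, so $e_Q\in\{1,p\}$, and tameness rules out $p$. You never invoke the root-of-unity hypothesis anywhere in your write-up, which is a warning sign, since it must be used somewhere; this is the spot. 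Note that this is not merely a cosmetic omission: if $e_Q$ could be an integer strictly between $1$ and $p$, your proof of Step~7 would break, since $\pi T$ would no longer be radical and $f^{1/p}-g\in\bigcap_Q Q$ would give only $f^{1/p}-g\in\sqrt{\pi T}\supsetneq \pi T$. With the Kummer/Galois observation inserted, your argument is complete.
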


\begin{proof}
We may assume that $f$ does not have a $p$-th root in $S$: if it does, then $f^{1/p}\in \m S^+$ and we have $\ppt(f)\leq 1/p$. Set $A\coloneqq S[Y]/(Y^p-f)\simeq S[f^{1/p}]$, and $R$ to be the normalization of $A$.

Note that for a fixed $j$, $A$ is regular in codimension one over $q_j$ if and only if $\Gamma_{q_jS_{(q_j)}}(f)\leq 1$, where $\Gamma_{q_jS_{(q_j)}}(f)$ is the largest power $t$ of $q_j$ such that $f$ admits a $p$-th root in $S_{(q_j)}/q_j^tS_{(q_j)}$. To see this, suppose that $\Gamma_{q_jS_{(q_j)}}(f)\geq 2$, writing $f=h^p+aq_j^2$ for some $a,h\in S_{(q_j)}$, we have $S_{(q_j)}[f^{1/p}]\simeq S_{(q_j)}[Y]_{(q_j,Y-h)}/(Y^p-h^p-aq_j^2)$. Since $Y^p-h^p\in (q_j,Y-h)^2$, we see that $A$ is not regular in codimension one over $q_j$. Conversely, if $\Gamma_{q_jS_{(q_j)}}(f)=0$, then $Y^p-f\in \kappa(q_j)[Y]$ is irreducible so that $A$ is regular in codimension one over $q_j$. If $\Gamma_{q_jS_{(q_j)}}(f)=1$, writing $f=h^p+aq_j$ for some $a,h\in S_{(q_j)}$, with $a\notin (q_j)$, the isomorphism $S_{(q_j)}[f^{1/p}]\simeq S_{(q_j)}[Y]_{(q_j,Y-h)}/(Y^p-h^p-aq_j)$ tells us that $A$ is regular in codimension one over $q_j$.

Next, note that in our setup we have $\Gamma_{q_jS}(f)\geq p$ if and only if $\Gamma_{q_jS_{(q_j)}}(f)\geq p$, that is, there is no need to localize. The forward implication is obvious. Now assume $\Gamma_{q_jS_{(q_j)}}(f)\geq p$. In particular, $f$ has a $p$-th root in $\kappa(q_j)=\mathrm{Q}(S/q_jS)$. Since $S/q_jS$ is normal, we have $\Gamma_{q_jS}(f)\geq 1$. Write $f=h^p+aq_j$ for some $a,h\in S$. For some $h_1,h_2\notin q_jS$ and $b\in S_{(q_j)}$, we have in $S_{(q_j)}$:

\[h^p+aq_j= (h_1/h_2)^p+bq_j^p.\]

Multiplying across by $h_2^p$, we see that $(hh_2)^p-h_1^p\in q_jS$ and hence that $(hh_2-h_1)^p\in q_jS$. Thus, $hh_2-h_1\in q_jS$. Using this back in the above equation and noting that $\mathrm{ord}_{q_j}(p)\geq p-1$ (since $S$ contains a primitive $p$-th root of unity) yields $aq_j\in q_j^pS_{(q_j)}$. Hence $a\in q_j^{p-1}S_{(q_j)}\cap S=q_j^{(p-1)}S=q_j^{p-1}S$. This shows $\Gamma_{q_jS}(f)\geq p$.

We can now finish the proof. Let $i$ be such that $S\to R$ is tamely ramified in codimension one over $q_i$. By \cite[Theorem 1.1]{katzsridhar}, $f\notin q_iS$ and \[\Gamma_{q_iS_{(q_i)}}(f)\geq \dfrac{p}{p-1}\mathrm{ord}_{q_i}(p)\geq \dfrac{p}{p-1}(p-1)=p.\] From what we showed above $\Gamma_{q_iS}(f)\geq p$. Write $f=h^p+q_i^pb$ for some $b,h\in S$, $h\notin q_iS$. First suppose that $p$ is odd. Consider the following in $Q(A)$:
\begin{align*}
    &(f^{1/p})^p-h^p-q_i^pb=0\\
    & (f^{1/p}-h)^p-pc(f^{1/p}-h)-q_i^pb=0 \\
    & (f^{1/p}-h)^p-ucq_i^{p-1}(f^{1/p}-h)-q_i^pb=0
\end{align*}
for some $u,c\in A$. Setting $U\coloneqq f^{1/p}-h$ and $V\coloneqq q_i$ and dividing across by $V^p$, we get a deformation of an Artin--Schreier polynomial in $U/V$:
\begin{align*}
    &(U/V)^p-uc(U/V)-b=0.\\  
\end{align*}
In particular, $U/V\in R$ and $f^{1/p}=q_iU/V+h\in \mathfrak{m}R\subseteq \mathfrak{m}S^+$. Thus, $\ppt(f)\leq 1/p$. If $p=2$, one directly verifies that for $\alpha\coloneqq q_i^{-1}(f^{1/2}+h)\in Q(A)$, $\alpha$ is a root of the polynomial $X^2-hq_i^{-1}2X-b\in S[X]$. Thus $f^{1/2}\in \m S^+$ and $\ppt(f)\leq 1/2$.
\end{proof}

We record a special case of \autoref{thm:intermediate} below.

    \begin{corollary}\label{cor:URLR_pthrootunity}
        Let $(R,\m)$ be an unramified regular local ring of mixed characteristic $(0,p\geq 3)$ and $S\coloneqq R[X]/(\Phi_p(X))$ where $X$ is an indeterminate over $R$ and $\Phi_p(X)$ is the $p$-th cyclotomic polynomial. If $0\neq f\in S$ is a non-unit such that $S\to \overline{S[f^{1/p}]}$ ($\overline{*}$ is normalization) is \'{e}tale in codimension one over $p$, then $\ppt(f \in S)\leq 1/p$.
    \end{corollary}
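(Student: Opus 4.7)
The plan is to recognize the corollary as a direct specialization of \autoref{thm:intermediate}, so the main work is just to verify its hypotheses for $S = R[X]/(\Phi_p(X))$: that $(i)$ $S$ is a regular local ring of mixed characteristic $(0, p)$ containing a primitive $p$-th root of unity, $(ii)$ the irreducible components of $\Spec(S/(p))$ are normal, and $(iii)$ the ramification hypothesis on $f$ holds for some minimal prime of $pS$.

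First I would unpack the structure of $S$. Using that $R$ is unramified, extend $p$ to a regular system of parameters $p, x_2, \dots, x_d$ of $\m$, and let $\zeta$ denote the image of $X$ in $S$, so $\zeta$ is a primitive $p$-th root of unity. The classical factorization
\[
p = \prod_{i=1}^{p-1}(1-\zeta^i) = u(\zeta-1)^{p-1}
\]
for a unit $u \in S$ shows that $\sqrt{pS} = (\zeta-1)S$. Since $\Phi_p(X) \equiv (X-1)^{p-1} \pmod p$, we have $S/\m S \cong (R/\m)[X]/((X-1)^{p-1})$, which is local; hence $S$ is local with maximal ideal $\fn = (\zeta-1, x_2, \dots, x_d)S$. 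As $p$ lies in $(\zeta-1)S$, these $d$ elements generate $\fn$ with $d = \dim S$, so $S$ is regular. This yields hypothesis $(i)$.

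For $(ii)$, the minimal primes of $pS$ agree with those of $(\zeta-1)S$, which is already prime; thus $pS$ has a unique prime divisor $q_1 = (\zeta-1)S$, and the only irreducible component of $\Spec(S/(p))$ is $\Spec(S/(\zeta-1)) \cong \Spec(R/pR)$. Since $R$ is unramified regular, $R/pR$ is regular and hence normal, so $(ii)$ holds; this also identifies the unique prime divisor of $p$ in $S$ as $q_1$.

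Finally, the étale-in-codimension-one assumption on $S \to \overline{S[f^{1/p}]}$ over $p$ is automatically the condition over $q_1$, since $q_1$ is the only height-one prime containing $p$. As étale is a special case of tamely ramified, hypothesis $(iii)$ of \autoref{thm:intermediate} holds with $i = 1$; noting $0 \neq f \in \fn$ because $f$ is a nonzero non-unit, the theorem yields $\ppt(f \in S) \leq 1/p$. The whole argument is essentially bookkeeping, and the only non-trivial input is the standard factorization $p = u(\zeta-1)^{p-1}$; the restriction $p \geq 3$ merely excludes the degenerate case $\Phi_2(X) = X+1$, where $S \cong R$ and the construction adjoins nothing new.
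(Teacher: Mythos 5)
Your proposal is correct and follows essentially the same route as the paper: both reduce the corollary to verifying the hypotheses of \autoref{thm:intermediate}, both use the factorization $p = u(\zeta-1)^{p-1}$ together with $\Phi_p(X)\equiv(X-1)^{p-1}\pmod p$ to show $S$ is regular local with the unique prime divisor $(\zeta-1)$ over $p$, and both identify $S/(\zeta-1)\cong R/pR$ as regular, hence normal. The only difference is that you spell out the remaining bookkeeping (uniqueness of the height-one prime, étale implies tamely ramified, the $p\geq 3$ degeneracy) that the paper leaves implicit.
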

\begin{proof}
   To apply \autoref{thm:intermediate} it suffices to note that $S$ is regular local. We confirm this. The identity $X^p-1\equiv_p (X-1)^p$ tells us that $S$ is local with maximal ideal $\mathrm{n}\coloneqq(\m,\zeta-1)$ where $\zeta$ is a primitive $p$-th root of unity. In $\mathbb{Z}[\zeta]$, $p=u(\zeta-1)^{p-1}$ where $u$ is a unit and hence $S$ is regular local. Moreover, $S/(\zeta-1)$ is regular and in particular normal.
\end{proof}

Note that an unramified regular local ring of mixed characteristic $p>0$ contains a primitive $p$-th root of unity if and only if $p=2$. Here is an unramified version of \autoref{thm:intermediate}:

\begin{theorem}\label{thm:2}
        Let $(S,\m)$ be an unramified regular local ring of mixed characteristic $(0,p>0)$. If $0\neq f\in \m$ is such that there exists $P\in \spec\overline{S[f^{1/p}]}$ ($\overline{*}$ is normalization) lying over $p$ with $S_{(p)}\to \overline{S[f^{1/p}]}_P$ \'{e}tale, then $\ppt(f)\leq 1-1/p$.
    \end{theorem}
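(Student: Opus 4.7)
The strategy parallels the proof of \autoref{thm:intermediate}, taking $p$ itself in place of $q_i$ and aiming at the weaker bound $1-1/p$ instead of $1/p$. First, because \'etale implies tame, \cite[Theorem 1.1]{katzsridhar} applied at $P$ yields
\[
\Gamma_{pS_{(p)}}(f) \;\geq\; \tfrac{p}{p-1}\cdot \mathrm{ord}_p(p) \;=\; \tfrac{p}{p-1}.
\]
As $\Gamma$ is an integer and $p/(p-1) > 1$, this forces $\Gamma_{pS_{(p)}}(f) \geq 2$. The ring $S/(p)$ is regular local and hence normal, so the same style of descent used in \autoref{thm:intermediate} upgrades this to $\Gamma_{pS}(f) \geq 2$: lift a $p$-th root $\bar h$ of $\bar f$ from $S/(p)$ to $\tilde h \in S$, then note that any lift $h \in S_{(p)}$ can be written as $\tilde h + p\delta$ with $\delta \in S_{(p)}$ and that $(\tilde h + p\delta)^p \equiv \tilde h^p \pmod{p^2}$; combining this with the $S_{(p)}$-level equation $f = h^p + p^2 b$ forces $f - \tilde h^p \in p^2 S_{(p)} \cap pS = p^2 S$. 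Thus $f = h^p + p^2 a$ with $h, a \in S$, and because $f \in \m$ and $p^2 a \in \m$, primality of $\m$ forces $h \in \m$.

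Next set $W \coloneqq f^{1/p} - h \in R$. Expanding $(W+h)^p = f$ and subtracting $h^p$ yields
\[
W^p + p \cdot B(W, h) \;=\; p^2 a, \qquad B(W, h) \coloneqq \sum_{k=1}^{p-1} \tfrac{\binom{p}{k}}{p}\, W^k h^{p-k} \in S[W].
\]
Rather than extracting an Artin--Schreier equation as in the proof of \autoref{thm:intermediate}, I will read off the valuation of $W$ from the Newton polygon of this monic degree-$p$ polynomial over the DVR $S_{(p)}$. If $v_p(h) = 0$, the vertices $(0, \geq 2), (1, 1), (p, 0)$ produce one root of valuation $1$ and $p-1$ roots of valuation $1/(p-1)$, and both satisfy $(p-1)v_p(W) \geq 1$. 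If $v_p(h) \geq 1$ (including the degenerate subcase $h=0$, which corresponds to $f \in p^2 S$), the polygon collapses to the single segment from $(0, \geq 2)$ to $(p, 0)$, yielding $v_p(W) \geq 2/p$ and hence $(p-1)v_p(W) \geq 2(p-1)/p \geq 1$. In every case $W^{p-1} \in pR^+$.

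Finally, expand
\[
f^{(p-1)/p} \;=\; (h + W)^{p-1} \;=\; \sum_{k=0}^{p-1} \binom{p-1}{k} h^{p-1-k} W^k.
\]
For $0 \leq k \leq p-2$ the factor $h^{p-1-k}$ lies in $\m$ because $h \in \m$ and $p-1-k \geq 1$; for $k = p-1$ we use $W^{p-1} \in pR^+ \subseteq \m R^+$. Every summand is thus in $\m R^+$, so $f^{(p-1)/p} \in \m S^+$ and $\ppt(f) \leq 1 - 1/p$.

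The main obstacle I expect is keeping the Newton polygon analysis uniform across the cases $v_p(h) = 0$ and $v_p(h) \geq 1$, since the latter handles the degenerate subcase where $\bar h = 0$ in $\kappa(p)$ (equivalently $f \in p^2 S$). The descent step from $\Gamma_{pS_{(p)}}(f) \geq 2$ to $\Gamma_{pS}(f) \geq 2$ also requires an adjustment relative to \autoref{thm:intermediate}, since $\mathrm{ord}_p(p) = 1$ rather than $p-1$, and we no longer have primitive $p$-th roots of unity to exploit; the identity $(\tilde h + p\delta)^p \equiv \tilde h^p \pmod{p^2}$ is the replacement for that ingredient.
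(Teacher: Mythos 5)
Your proof is correct but takes a genuinely different route from the paper's for the core step. After both proofs establish that $f = h^p + p^2 a$ with $h, a \in S$ and $h \in \m$, the paper exhibits an explicit element of the normalization: since $A \coloneqq S[f^{1/p}]$ is Gorenstein, $R = \overline{A}$ is reflexive over $A$ and equals $(A :_{Q(A)} J)$ for $J$ the conductor; one then checks directly that $p^{-1}(f^{(p-1)/p} + hf^{(p-2)/p} + \cdots + h^{p-1}) \in (A :_{Q(A)} P) \subseteq R$, from which $f^{(p-1)/p} \in \m R$ follows. You instead read off the valuation of $W = f^{1/p} - h$ from the Newton polygon of the monic degree-$p$ equation it satisfies over $S_{(p)}$, and then use normality of $R$ (checking at all height-one primes over $p$ at once) to place $W^{p-1}/p$ in $R$. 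Your method avoids the Gorenstein/reflexivity input and feels more elementary; the paper's is more structural and produces the relevant element by a single clean verification. Both arrive at $f^{(p-1)/p} \in \m R^+$ in essentially the same way at the end. One place your argument is imprecise but repairable: in the $v_p(h) \geq 1$ case the polygon need not be a single segment if $v_p(a)$ is large relative to $p$, but the least-steep slope is still bounded and your estimate $(p-1)v_p(W) \geq 1$ holds in all cases; and ``one root of valuation $1$'' should read ``$\geq 1$'' when $v_p(h)=0$.

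There is one step you should justify more carefully: you cite \cite[Theorem 1.1]{katzsridhar} to get $\Gamma_{pS_{(p)}}(f) \geq p/(p-1)$, but the paper quotes that criterion (\autoref{rem:explicit}) only ``in the setting of \autoref{thm:intermediate},'' where $S$ contains a primitive $p$-th root of unity, which fails for an unramified $S$ when $p > 2$. The paper's proof of \autoref{thm:2} deliberately avoids katzsridhar and instead argues directly: if $f$ has no $p$-th root mod $p$ the residue extension is purely inseparable so the map is not \'etale, and if $f = h^p + ap$ with $a \notin pS$ then by Eisenstein $(p, f^{1/p}-h)$ is the maximal ideal of $S_{(p)}[f^{1/p}]$ with $p \in (f^{1/p}-h)^2$, so again not \'etale. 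This establishes $f = h^p + p^2 a$ without invoking katzsridhar. The implication you actually need (\'etale $\Rightarrow$ $\Gamma \geq 2$) is true and can be proved exactly this way, so the gap is easily filled, but as written it leans on a theorem whose hypotheses you haven't verified hold in the unramified case.
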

    \begin{proof}
We first show that if $f$ is such that the condition in the statement is satisfied, then $f$ admits a $p$-th root modulo $p^2$ i.e. $f=h^p+p^2a$ for some $a,h\in S$ (the converse is also true, but it is not relevant to the proof). Suppose $f$ does not admit a $p$-th root modulo $pS$. Then since $S/pS$ is normal, it follows that $f$ does not admit a $p$-th root modulo $p$ in $S_{(p)}$ as well. Set $\mathcal{S}\coloneqq S\setminus(pS)$. It then follows that $S_{(p)}\to \mathcal{S}^{-1}S[f^{1/p}]$ is not \'{e}tale (there is a purely inseparable extension of residue fields). Since $S_{(p)}\to \mathcal{S}^{-1}\overline{S[f^{1/p}]}$ factors through the map $S_{(p)}\to \mathcal{S}^{-1}S[f^{1/p}]$, the former is not \'{e}tale. Now suppose that $f$ admits a $p$-th root modulo $p$, but not $p^2$, i.e., $f=h^p+ap$ and $a\notin pS$. Then $S_{(p)}[f^{1/p}]\simeq S_{(p)}[T]/(T^p-f)$ is local with uniformizer $f^{1/p}-h$. In particular $p\in (f^{1/p}-h)^2$ and hence $S_{(p)}\to \mathcal{S}^{-1}S[f^{1/p}]$ is not \'{e}tale. This again implies $S_{(p)}\to \mathcal{S}^{-1}\overline{S[f^{1/p}]}$ is not \'{e}tale.

\par Now assume $f$ has a $p$-th root modulo $p^2$ and write $f=h^p+p^2a$ for some $a,h\in S$. Suppose $\{Q_1,\dots,Q_n\}$ is the singular locus of $S[f^{1/p}]$ in codimension one. In other words, the $Q_i$ are the primes associated to the conductor $J$ of $S[f^{1/p}]$. From the isomorphism $S[f^{1/p}]\simeq S[T]/(T^p-f)$ and the form of $f$ it follows that there is a single codimension one prime $P\coloneqq(p,f^{1/p}-h)$ in $S[f^{1/p}]$ over $p$ and that $P$ is amongst the $Q_i$. Suppose $Q_1=P$. Set $R\coloneqq\overline{S[f^{1/p}]}$ and $A\coloneqq S[f^{1/p}]$. Since $A$ is Gorenstein, $R$ is reflexive in codimension one over $A$. Moreover, since $R$ satisfies $S_2$ over $A$, it is reflexive over $A$. Hence, it follows that $R$ can be identified with $\Hom_{A}(J,A)$. The latter can also be identified with the $A$-submodule of $Q(A)$ given by $(A:_{Q(A)}J)$. 

\par Now an injective map of ideals $I_1\to I_2$ in $A$ induces an injective map $\Hom_A(I_2,A)\to \Hom_A(I_1,A)$ (since the cokernel of  $I_1\to I_2$ is torsion). Thus, if $J_1$ is the $P$-primary component of $J$, there are injections $\Hom_{A}(P,A)\to \Hom_{A}(J_1,A) \to \Hom_{A}(J,A)= R$. This corresponds to the inclusion of $A$-submodules of $Q(A)$, $(A:_{Q(A)}P)\subseteq (A:_{Q(A)}J_1)\subseteq R$. Now the fact that $p^{-1}(f^{(p-1)/p}+hf^{(p-2)/p}+\dots+h^{p-1})\in (A:_{Q(A)}P)$ is easily verified. Thus, $p^{-1}(f^{(p-1)/p}+hf^{(p-2)/p}+\dots+h^{p-1})\in R$ and hence $f^{(p-1)/p}\in \m R\subseteq \m S^+$. This completes the proof.
\end{proof}

\begin{remark}\label{rem:explicit}
In the setting of \autoref{thm:intermediate}, an explicit characterization for the condition $S\to \overline{S[f^{1/p}]}$ being \'{e}tale in codimension one over $q_i$ is given by the numerical criterion
 \[\Gamma_{q_i}(f)\geq \dfrac{p}{p-1}\mathrm{ord}_{q_i}(p),\]
where $\Gamma_{q_i}(f)$ is the largest power $t$ of $q_i$ such that $f$ admits a $p$-th root in $S_{(q_i)}/q_i^tS_{(q_i)}$, see \cite[Theorem 1.1]{katzsridhar}.
\end{remark}

\begin{remark}\label{rem:URLR}
    With notation as in \autoref{thm:2}, note that the proof of \autoref{thm:2} shows that any $f\in S$ admitting a $p$-th root modulo $p^2$ satisfies the bound $\ppt(f)\leq 1-1/p$. Conversely, a computation shows that any $f$ of this form satisfies the hypothesis of the theorem, i.e., $S\to \overline{S[f^{1/p}]}$ is \'{e}tale in codimension one over $p$. Thus, if $S$ is $p$-complete, then by \autoref{prop.PPTWhenHypersurfaceIsPerfdPure}, any $p$-th power in $S$ has a ball of radius $1/p^2$ around it (under the $p$-adic metric) consisting of non perfectoid pure forms.
\end{remark}

\section{Extremal hypersurfaces and elliptic curves}
In the previous section, we noted that the plus-pure threshold is bounded below by the corresponding $F$-pure threshold. We showed that the plus-pure threshold decreases to eventually agree with the corresponding $F$-pure threshold after passing to a highly ramified DVR. In this section, we study several families of hypersurfaces for which, in the absence of any ramification, the plus-pure threshold no longer coincides with the corresponding $F$-pure threshold. 

\subsection{Extremal hypersurfaces}
Let $k$ be a perfect field of characteristic $p > 0$. For any homogeneous polynomial $\ov{f} \in k[x_1, \dots, x_n]$ that is reduced over the algebraic closure of $k$, \cite{KadyrsizovaKenkelPageSinghSmithVraciuWittLowerBoundsonFPureThresholdsandExtremalSingularities} determined a lower bound for $\fpt(\ov{f})$, denoting any $\ov{f}$ that attains this lower bound as an \emph{extremal singularity}. 
\begin{theorem}[\cite{KadyrsizovaKenkelPageSinghSmithVraciuWittLowerBoundsonFPureThresholdsandExtremalSingularities}, Theorem 1.1]
\label{Thm: ExtremalSingsPositiveChar}
    Let $\ov{f} \in k[x_1, \dots, x_n]$ be a homogeneous polynomial of degree $d \geq 2$ that is reduced over the algebraic closure of $k$. Then $$\fpt(\ov{f}) \geq \frac{1}{d-1},$$ with equality if and only if $d = p^e+1$ for some $e \geq 1$ and $\ov{f} =  \sum_{i=1}^n x_i^{p^e}L_i$, for $L_i$ linear forms. 
\end{theorem}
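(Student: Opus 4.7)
The theorem decomposes into three parts: the inequality $\fpt(\ov f)\geq 1/(d-1)$, the ``extremal form $\Rightarrow$ equality'' direction, and its converse. I describe a strategy for each.

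\textbf{Lower bound.} The plan is to use the formula $\fpt(\ov f)=\sup\{\nu/p^e\mid \ov f^\nu\notin\fm^{[p^e]}\}$ and, for each $e\gg 0$, exhibit $\nu_e\coloneqq \lfloor p^e/(d-1)\rfloor$ as a valid witness (so $\nu_e/p^e\to 1/(d-1)$ from below). Since $\nu_e(d-1)\leq p^e$, the total degree $\nu_e d=\nu_e+\nu_e(d-1)\leq p^e+\nu_e$ is only slightly larger than $p^e$, so pigeonhole provides monomials of that degree whose individual exponents are all at most $p^e-1$. The substantive input is the \emph{reducedness} of $\ov f$, which I would exploit as follows: by the Jacobian criterion, pick a smooth $\ov k$-point $P$ of the projective variety $V(\ov f)$ and a linear change of coordinates so that $\ov f=x_nh+g$ with $h(P)\neq 0$ and $g\in (x_1,\dots,x_{n-1})^2\ov k[x_1,\dots,x_n]$. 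Multinomial expansion of $\ov f^{\nu_e}$ then isolates a pure term of the shape $x_n^{\nu_e}h^{\nu_e}$; further expansion of $h^{\nu_e}$ produces an explicit monomial with nonzero coefficient in the desired exponent window.

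\textbf{Easy direction of equality.} If $d=p^e+1$ and $\ov f=\sum_{i=1}^n x_i^{p^e}L_i$ with linear $L_i$, then $\ov f\in(x_1^{p^e},\dots,x_n^{p^e})=\fm^{[p^e]}$. Iterating Frobenius gives $\ov f^{p^k}\in\fm^{[p^{e+k}]}$ for every $k\geq 0$, so
\[\fpt(\ov f)\;\leq\;\lim_{k\to\infty}\frac{p^k-1}{p^{e+k}}\;=\;\frac{1}{p^e}\;=\;\frac{1}{d-1},\]
and the lower bound forces equality.

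\textbf{Converse direction.} Assume $\fpt(\ov f)=1/(d-1)$; I would proceed in two steps. \emph{Step 1:} show $d-1$ is a power of $p$. If not, then $d-1$ has a nontrivial factor coprime to $p$, so for suitable $e$ the ceiling $\lceil p^e/(d-1)\rceil$ exceeds $p^e/(d-1)$ with gap bounded below uniformly in $e$. A refinement of the lower-bound argument, tuned to this ceiling rather than floor, would produce $\ov f^{\lceil p^e/(d-1)\rceil}\notin\fm^{[p^e]}$, contradicting $\fpt(\ov f)=1/(d-1)$. Hence $d-1=p^e$ for some $e\geq 1$. \emph{Step 2:} given $d=p^e+1$, upgrade $\fpt(\ov f)=1/p^e$ to $\ov f\in\fm^{[p^e]}$; the degree constraint $\deg\ov f=p^e+1$ then immediately forces $\ov f=\sum L_ix_i^{p^e}$ with linear $L_i$. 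To establish the containment, I would argue by contrapositive: if $\ov f\notin\fm^{[p^e]}$, then $\ov f$ has a monomial of degree $p^e+1$ with every exponent at most $p^e-1$, and I would feed this monomial into the reducedness-based construction to produce, for each $k$, a monomial appearing with nonzero coefficient in $\ov f^{p^k+1}$ lying outside $\fm^{[p^{e+k}]}$, pushing $\fpt(\ov f)$ strictly past $1/p^e$.

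\textbf{Main obstacle.} The substantive difficulty is clearly Step 2 of the converse: the naive bound $\nu^{\ov f}(p^{e+k})\leq p^k$ only yields $\fpt(\ov f)\leq 1/p^e$ and does \emph{not}, by itself, preclude $\ov f\notin\fm^{[p^e]}$ (both $\nu^{\ov f}(p^e)=0$ and $\nu^{\ov f}(p^e)=1$ are a priori compatible with $\fpt(\ov f)=1/p^e$). The sharpening required---that an extremal $\fpt$ is incompatible with $\ov f$ contributing a ``small-exponent'' monomial---is where reducedness must be combined with the fine numerics of homogeneous Frobenius powers in dimension $n$. Step 1 reduces to the combinatorics of $\lceil p^e/(d-1)\rceil$, and both the lower bound and the easy direction are routine by comparison.
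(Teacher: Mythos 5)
The paper imports this theorem from \cite{KadyrsizovaKenkelPageSinghSmithVraciuWittLowerBoundsonFPureThresholdsandExtremalSingularities} with a citation and gives no proof, so there is no internal argument to compare against; I evaluate your sketch on its own terms. Your decomposition is the right one, and the easy direction of the equality is correct. For the lower bound, though, your normalization ``$\ov f = x_n h + g$, $h(P)\neq 0$, $g\in (x_1,\dots,x_{n-1})^2$'' is inconsistent with $P\in V(\ov f)$ at the coordinate point (then $g(P)=0$ forces $\ov f(P)=h(P)\neq 0$). The intended version: move the smooth point to $[1:0:\dots:0]$; then $x_1^d$ has zero coefficient (as $\ov f(P)=0$), smoothness gives some $x_1^{d-1}x_j$ with nonzero coefficient $c$, and the monomial $x_1^{(d-1)\nu}x_j^\nu$ in $\ov f^\nu$ comes \emph{only} from $(cx_1^{d-1}x_j)^\nu$ (every term of $\ov f$ has $x_1$-degree $\leq d-1$, and only one term of $x_1$-degree exactly $d-1$ involves $x_j$), so it survives with coefficient $c^\nu\neq 0$. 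This non-cancellation is the content of the step; ``pigeonhole'' only shows a candidate monomial exists, not that it appears in $\ov f^\nu$. Also take $\nu_e = \lfloor (p^e-1)/(d-1)\rfloor$ rather than $\lfloor p^e/(d-1)\rfloor$, since the latter puts the witness monomial inside $\fm^{[p^e]}$ when $(d-1)\mid p^e$.

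The substantive gaps are in the converse, and you underestimate Step 1. With $\nu = \lceil p^e/(d-1)\rceil$ one has $(d-1)\nu \geq p^e$, so the witness monomial $x_1^{(d-1)\nu}x_j^\nu$ already lies in $\fm^{[p^e]}$ and the monomial-survival argument is vacuous; no ``refinement'' of that argument will do. One needs an entirely different monomial to survive, and whether one does is governed by the arithmetic of multinomial coefficients mod $p$. (Concretely, for $p=2$, $\ov f = x^3y+y^4$, $d-1=3$: it is the \emph{cross} term $x^6y^6$ of $\ov f^{3}=x^9y^3+x^6y^6+x^3y^9+y^{12}$, surviving because $\binom{3}{1}\equiv 1 \bmod 2$, that shows $\ov f^{3}\notin(x^8,y^8)$ and hence $\fpt>1/3$; the diagonal monomial $x^9y^3$ is useless.) The source paper handles both Step 1 and Step 2 with considerably heavier machinery --- analyzing the set of monomials appearing in $\ov f$ together with arithmetic constraints on $F$-thresholds of forms, such as the fact that $p\cdot\fpt$ and its fractional part are again $F$-jumping numbers --- not by perturbing the lower-bound computation. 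You correctly flag Step 2 as the main obstacle; Step 1 as sketched is also genuinely open.
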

This theorem provides an explicit description of extremal hypersurfaces in positive characteristic. One can then ask about the plus-pure threshold of the corresponding polynomial $f \in R =  V\llbracket x_2, \dots, x_n\rrbracket$, for $(V,\varpi)$ any mixed characteristic $(0,p>0)$ DVR with uniformizer $\varpi$. We say that such an $f$ has an \emph{extremal singularity mod $p$} if $\ov{f} \in R/(\varpi) = (V/(\varpi))[x_2, \dots, x_n]$ has an extremal singularity in the sense of \autoref{Thm: ExtremalSingsPositiveChar}. We are defining $\ov{f}$ to be the image of $f$ under the map $R \to R/(\varpi)$, as in the statement of \autoref{lem.EasyLemmaBoundWhenAddingRoots}. Using this definition, we obtain a mixed characteristic analogue to the above theorem when the degree of $f$ is bounded by the order of roots of $p$ in $V$:
\begin{lemma}\label{lem:extremalMixed}
    Fix $e \geq 1$. Let $f \in V\llbracket x_2, \dots, x_n\rrbracket$ be a homogeneous polynomial of degree $d \geq 2$ such that $\ov{f}$ is reduced over the algebraic closure of $V/\varpi$, where $(V,\varpi)$ is a mixed characteristic $(0,p>0)$ complete DVR. Then
    \[
        \ppt(f) \geq \frac{1}{d-1}.
    \]  
     Further for $e \geq 0$, if $d \leq p^{e+1}$ and $p^{1/p^e} \in V$, then this is an equality if and only if $d = p^{e_0} + 1$ for some $1 \leq e_0 \leq e$ and $f$ has an extremal singularity mod $p$. 
\end{lemma}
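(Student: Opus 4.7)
The plan is to transfer the positive-characteristic extremal bound to mixed characteristic via reduction mod $\varpi$, and then to upgrade the comparison to equality in the extremal case by a careful application of \autoref{l-lemma2.3}(ii), mirroring the strategy in \autoref{l-homogeneoushyp_ramification} and \autoref{lemma:TuckersBaneRamified}.

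For the lower bound, \autoref{lem.pptVsFPTEasyComparison} gives $\ppt(f)\geq\fpt(\overline{f})$, where $\overline{f}\in(V/(\varpi))[x_2,\ldots,x_n]$ is the homogeneous degree-$d$ reduction, and \autoref{Thm: ExtremalSingsPositiveChar} then yields $\fpt(\overline{f})\geq 1/(d-1)$ (implicitly using that an extremal-candidate $\overline{f}$ is reduced over the algebraic closure). If moreover $\ppt(f)=1/(d-1)$, then $\fpt(\overline{f})=1/(d-1)$ by the lower bound, and the equality clause of \autoref{Thm: ExtremalSingsPositiveChar} forces $d-1=p^{e_0}$ together with $\overline{f}=\sum_{i=2}^n x_i^{p^{e_0}}L_i$ for some $e_0\geq 1$; the hypothesis $d\leq p^{e+1}$ then produces $e_0\leq e$.

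For the converse, assume $d=p^{e_0}+1$ with $1\leq e_0\leq e$ and $\overline{f}$ extremal. Lifting the decomposition, write $f=\sum_{i=2}^n x_i^{p^{e_0}}\tilde L_i+\varpi h$, so that $f\in(\varpi,x_2^{p^{e_0}},\ldots,x_n^{p^{e_0}})R$. The hypothesis $p^{1/p^e}\in V$ with $e_0\leq e$ supplies $\pi=p^{1/p^{e-e_0}}\in V$ dividing $p$ whose $p^{e_0}$-th root $\pi^{1/p^{e_0}}=p^{1/p^e}$ is a positive power of the uniformizer of $V$. The plan is to first upgrade the containment to $f\in(\pi,x_2^{p^{e_0}},\ldots,x_n^{p^{e_0}})R$ and then apply \autoref{l-lemma2.3}(ii) with this choice of base element and $\epsilon=1$ to conclude $f^{1/p^{e_0}}\in(\pi^{1/p^{e_0}},x_2,\ldots,x_n)R^+\subseteq\fm R^+$, yielding $\ppt(f)\leq 1/p^{e_0}=1/(d-1)$.

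The technical crux is upgrading the containment from $(\varpi,\ldots)$ to $(\pi,\ldots)$: the $\varpi h$ summand has $\varpi$-valuation only $1$, whereas $\pi=\varpi^{p^{e_0}}$ has valuation $p^{e_0}$, so a naive lift does not immediately suffice. The homogeneity of $f$ in the $x_i$'s of degree $d=p^{e_0}+1$ constrains $h$ to be a homogeneous form of the same degree, which should be enough to absorb $\varpi h$ into the Frobenius-power ideal $(x_2^{p^{e_0}},\ldots,x_n^{p^{e_0}})$ after tracking monomials; failing this, a perfectoid/approximate-Frobenius argument in the completion $\widehat{R^+}$, in the spirit of \autoref{prop.PPTWhenHypersurfaceIsPerfdPure}, should produce $f^{1/p^{e_0}}\in\fm\widehat{R^+}$ directly by lifting the identity $\overline{f}^{1/p^{e_0}}=\sum_i x_i\,L_i^{1/p^{e_0}}\in(x_2,\ldots,x_n)(R/\varpi)^{+}$.
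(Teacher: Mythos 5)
Your lower bound and the ``only if'' direction match the paper's argument step for step. For the ``if'' direction your route genuinely diverges from the paper's, and the two fallback ideas you sketch have concrete problems.

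The paper's converse argument is a direct citation of \autoref{cor.EqualityOnTheNoseAfterRamifying}: since $\fpt(\overline{f})=1/p^{e_0}$ has terminating base-$p$ expansion and $V$ contains a $p^{e_0}$-th root of $p$ (because $e_0\leq e$), that corollary, together with \autoref{Thm: ExtremalSingsPositiveChar}, yields $\ppt(f)=\fpt(\overline{f})=1/p^{e_0}$ at once. Unwinding it, \autoref{lem.EasyLemmaBoundWhenAddingRoots} applies \autoref{l-lemma2.3}(ii) to the containment $f\in(x_2^{p^{e_0}},\ldots,x_n^{p^{e_0}},\varpi)$ with $\varpi$ appearing only to the first power; after extracting $p^{e_0}$-th roots one lands in an ideal containing $\varpi^{1/p^{e_0}}$, and the hypothesis $p^{1/p^e}\in V$ is exactly what makes that element a non-unit of the coefficient DVR one works over. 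You instead try to first upgrade to $f\in(\pi,x_2^{p^{e_0}},\ldots,x_n^{p^{e_0}})$ with $\pi=p^{1/p^{e-e_0}}$, whose $\varpi$-valuation is at least $p^{e_0}$. That is a strictly stronger statement, and it is false in general.

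Concretely, homogeneity of $h$ in degree $p^{e_0}+1$ does not put $h$ inside $(x_2^{p^{e_0}},\ldots,x_n^{p^{e_0}})$: take $h=x_2^ax_3^b$ with $a+b=p^{e_0}+1$ and both $a,b<p^{e_0}$, which is possible whenever $n-1\geq 2$. Then $\varpi h$ has $\varpi$-valuation $1$, but $\pi$ has $\varpi$-valuation $\geq p^{e_0}>1$, so $\varpi h\notin(\pi,x_2^{p^{e_0}},\ldots,x_n^{p^{e_0}})$ and your upgraded containment does not hold, no matter how carefully you track monomials. The alternative perfectoid lift you float also does not close the gap: showing that the image of $f^{1/p^{e_0}}$ in $(R/\varpi)^+$ lies in $(x_2,\ldots,x_n)(R/\varpi)^+$ does not give $f^{1/p^{e_0}}\in(\varpi,x_2,\ldots,x_n)R^+$, because the natural map $R^+/\varpi R^+\to(R/\varpi)^+$ is far from injective (many primes of $R^+$ lie over $\varpi$). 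You should rebuild the ``if'' direction by invoking \autoref{cor.EqualityOnTheNoseAfterRamifying} rather than trying to raise the power of the base element before applying \autoref{l-lemma2.3}.
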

\begin{proof}
    We have $\ppt(f) \geq \fpt(\ov{f})$ by \autoref{lem.pptVsFPTEasyComparison}, and $\fpt(\ov{f}) \geq \frac{1}{d-1}$ via \autoref{Thm: ExtremalSingsPositiveChar}. Now assume that $d \leq p^{e+1}$ and $p^{1/p^e} \in V$. If $f$ has an extremal singularity mod $p$ and is of degree $d = p^{e_0} + 1$, since $V$ contains a $p^{e_0}$-th root of $p$, by \autoref{cor.EqualityOnTheNoseAfterRamifying} and \autoref{Thm: ExtremalSingsPositiveChar}, 
    $$\ppt\left(f\right) = \fpt\left(\ov{f}\right) = \frac{1}{p^{e_0}}.$$
    So polynomials of this form achieve the desired bound. For the converse, since $\ppt(f) = \frac{1}{d - 1} \geq \fpt(\ov{f})$, it follows that $\fpt(\ov{f}) = \frac{1}{d-1}$. Thus, $\ov{f} = \sum_{i=2}^n x_i^{p^{e_0}}L_i$ is an extremal singularity via \autoref{Thm: ExtremalSingsPositiveChar}. In particular, $f$ has an extremal singularity mod $p$.  
\end{proof}

The following result establishes the curious fact that an extremal singularity mod $p$ may not be extremal in the sense of \autoref{lem:extremalMixed} when the coefficient DVR is \emph{not} ramified.  The proof is inspired by the arguments in \cite[Lemma 4.2, Corollary 4.3]{CaiPandeQuinlanGallegoSchwedeTuckerpluspurethresholdscusplikesingularities}.
    \begin{theorem}\label{thm:lowerbound}
 Let $R = W(k)\llbracket x, y, \mathbf{z}\rrbracket $, for $k$ a perfect field of characteristic $p > 0$, and $\mathbf{z} \coloneqq (z_1, \dots, z_n)$. Let $e \geq 1$ and $f' \in (p^{p^e}, x^{p^e}, y^{p^e}, \mathbf{z}^{p^e})R$ such that $f'=\sum_{i, j, k, \mathbf{h}} a_{i,j,k, \mathbf{h}}p^{i}x^{j}y^{k}\mathbf{z}^{\mathbf{h}}$, where $a_{i,j,k, \mathbf{h}} \in W(k)$ and for every monomial either $i \neq 0$ or $\mathbf{h} \neq \mathbf{0}$. Let $$(a,b) \in \{ (p^e+1, 0), (p^e, 1), (1, p^e), (0, p^e+1)\}.$$
 Then for $f =x^ay^b +x^by^a + f'$,  $f^{1/p^e} \notin (p,x, y, \mathbf{z})B$ for any BCM $R^+$-algebra $B$. In particular, $\ppt(f)>\frac{1}{p^e}$. 
\end{theorem}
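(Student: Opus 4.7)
The plan is to argue by contradiction: assume $u \coloneqq f^{1/p^e} \in (p, x, y, \mathbf{z})B$ for some BCM $R^+$-algebra $B$, write $u = p\alpha + x\beta_1 + y\beta_2 + \sum_{i} z_i c_i$ in $B$, raise to the $p^e$-th power, and extract an incompatible identity. The hypothesis that every monomial of $f'$ has $i \neq 0$ or $\mathbf{h} \neq \mathbf{0}$ ensures that upon setting $\mathbf{z} = 0$, every surviving monomial of $f'$ carries a factor of $p$, so $f \equiv x^ay^b + x^by^a + p \cdot h \pmod{\mathbf{z}}$ for some $h$ with each monomial in $(p^{p^e-1}, x^{p^e}, y^{p^e})$. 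Setting $\bar B \coloneqq B/(p,\mathbf{z})B$ and further reducing modulo $p$, the characteristic-$p$ multinomial expansion collapses to give the identity $x^{p^e}\beta_1^{p^e} + y^{p^e}\beta_2^{p^e} = x^ay^b + x^by^a$ in $\bar B$. Since $(x,y)$ is a regular sequence in $\bar B$ (coming from the BCM hypothesis on $B$ applied to the system of parameters $(p,x,y,\mathbf{z})$), the Koszul relation forces concrete identities modulo $(x^{p^e}, y^{p^e})\bar B$; for instance, in the case $(a,b) = (p^e + 1, 0)$ one gets $\beta_1^{p^e} \equiv x$ and $\beta_2^{p^e} \equiv y$ modulo $(x^{p^e}, y^{p^e})$, with the roles of $x,y$ swapped in the case $(a,b) = (p^e, 1)$.

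Next, I would refine the analysis modulo $(p^2, \mathbf{z})$. A computation using \autoref{lem: binomialValuation} shows that the only surviving terms in the multinomial expansion of $u^{p^e}$ modulo $(p^2, \mathbf{z})$ correspond to $m_0 = 0$ and $m_1 \in \{0, p^{e-1}, 2p^{e-1}, \ldots, p^e\}$, and the middle coefficients satisfy $\binom{p^e}{l p^{e-1}} \equiv \binom{p}{l} \pmod{p^2}$ for $1 \leq l \leq p-1$ (which can be verified by the freshman's-dream identity $(x+y)^{p^e} \equiv (x^{p^{e-1}} + y^{p^{e-1}})^p \pmod{p^2}$ proved by induction). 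Substituting the mod-$p$ expressions for $\beta_1^{p^e}, \beta_2^{p^e}$ into $x^{p^e}\beta_1^{p^e} + y^{p^e}\beta_2^{p^e}$, the leading contributions $x^ay^b + x^by^a$ cancel with those on the right-hand side, and after dividing by $p$ (using that $p$ is a non-zero-divisor in $B/(\mathbf{z})B$) one obtains a mod-$p$ congruence in $\bar B$ of the form
\[
\sum_{l=1}^{p-1} \tfrac{1}{p}\binom{p}{l} \, x^{l p^{e-1}} y^{(p-l) p^{e-1}} \gamma_1^l \gamma_2^{p-l} \in (x^{p^e}, y^{p^e}) \bar B,
\]
where $\gamma_i \coloneqq \beta_i^{p^{e-1}}$ satisfies $\gamma_1^p \equiv x$ and $\gamma_2^p \equiv y$ modulo $(x^{p^e}, y^{p^e})$.

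The main obstacle is converting this containment into a contradiction. The plan is to work in the quotient $\bar B/(x^{p^e}, y^{p^e})\bar B$ and study the $\bar R/(x^{p^e}, y^{p^e})\bar R$-subalgebra generated by $\gamma_1, \gamma_2$, which since $\gamma_1^p = x$ and $\gamma_2^p = y$ there, is presented as a Kummer extension that one expects to be free of rank $p^2$ with basis $\{\gamma_1^i \gamma_2^j : 0 \leq i, j \leq p-1\}$; freeness should follow from standard BCM/$R^+$-faithfulness arguments since this subalgebra already lives inside $\bar R^+$ via a compatible choice of $p$-th roots. Given freeness, the monomials $\gamma_1^l \gamma_2^{p-l}$ for $1 \leq l \leq p-1$ are linearly independent over $\bar R/(x^{p^e}, y^{p^e})$, and since each coefficient $\tfrac{1}{p}\binom{p}{l}$ is a unit in $\mathbb{F}_p$, the containment above forces $x^{l p^{e-1}} y^{(p-l) p^{e-1}} \equiv 0$ in $\bar R/(x^{p^e}, y^{p^e})$ for each $l$, which is absurd since both exponents are strictly less than $p^e$. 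This yields the desired contradiction and hence $f^{1/p^e} \notin (p,x,y,\mathbf{z})B$ for every BCM $R^+$-algebra $B$. The strict inequality $\ppt(f) > 1/p^e$ then follows from the characterization of the plus-pure threshold as the first jumping number of the BCM test ideal $\tau_+(f^{\lambda})$ (as recalled in the introduction) and the right-continuity of this invariant.
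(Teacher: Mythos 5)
Your strategy (contradiction, expand the $p^e$-th power, extract coefficient information using \autoref{lem: binomialValuation}) parallels the paper's, and the reductions modulo $(p,\mathbf{z})$ and $(p^2,\mathbf{z})$ are carried out correctly up to and including the derived containment $\sum_{l=1}^{p-1} \tfrac{1}{p}\binom{p}{l}\, x^{lp^{e-1}}y^{(p-l)p^{e-1}}\gamma_1^l\gamma_2^{p-l} \in (x^{p^e},y^{p^e})\bar B$. The problem is the final step. You need the monomials $\gamma_1^l\gamma_2^{p-l}$, $1 \le l \le p-1$, to be linearly independent over $k[x,y]/(x^{p^e},y^{p^e})$ inside $\bar B/(x^{p^e},y^{p^e})\bar B$, and this is not justified by asserting that the subalgebra ``already lives inside $\bar R^+$.'' It does not: $\gamma_1,\gamma_2$ are $(p^{e-1})$-th powers of the arbitrary coefficients $\beta_1,\beta_2\in B$ from the expansion of $f^{1/p^e}$, and there is no reason for them to come from $\bar R^+$. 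What is true is that $(\gamma_1 - x^{1/p})^p = 0$ in the characteristic-$p$ ring $\bar B/(x^{p^e},y^{p^e})\bar B$ (because both sides $p$-th power to $x$), so $\gamma_1$ differs from the chosen $x^{1/p}\in\bar R^+$ by an unknown nilpotent. The presence of these nilpotent errors is exactly what could ruin the rank-$p^2$ freeness you assume, and a BCM or faithfulness argument over $k[x,y]/(x^{p^e},y^{p^e})$ does not control them. This is the crux of the theorem, so the argument is genuinely incomplete, not just missing routine details. (Also note that the case $p^e=2$, i.e.\ $p=2,e=1$, is even more delicate; the paper handles it separately.)

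The paper's proof sidesteps this entirely. Rather than reducing modulo $p$ and trying to analyze the structure of $\bar B$, it chooses $p^e$-th roots in $R^+$ from the start to build an explicit element $g = x^{a/p^e}y^{b/p^e}+x^{b/p^e}y^{a/p^e}+g'$ with $g^{p^e}\equiv f \pmod p$, then uses \autoref{l-lemma2.3}(i) to force $f^{1/p^e}-g\in (p^{1/p^e})B$, and intersects with $(p,x,y,\mathbf z)B$ via the Koszul relations of the BCM algebra. This pins down $f^{1/p^e}$ modulo a monomial ideal $I = (p^{1+1/p^e},x^{p^e},y^{p^e},\mathbf z^{1/p^e})B$. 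After raising to the $p^e$-th power, the offending term (the $i=p^{e-1}$ binomial term, with coefficient exactly divisible by $p$ and fractional exponents strictly below $p^e$) can be tested directly against the \emph{monomial} ideal $I\cap S$ inside the honest power series ring $S = R\llbracket p^{1/p^e},x^{1/p^e},y^{1/p^e},\mathbf z^{1/p^e}\rrbracket$, where membership in a monomial ideal is a purely combinatorial question. No linear-independence claim in $\bar B$ is needed. If you want to repair your write-up, replacing the ``freeness of the $\gamma$-subalgebra'' step with the paper's explicit-roots-and-monomial-ideal comparison is the natural fix.
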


\begin{proof}
    Assume, for contradiction, that $f^{1/p^e} \in (p, x,y, \mathbf{z})B$. 
    After picking some $p^e$-th roots, we define $g' \coloneqq \sum_{i, j, k, \mathbf{h}} a_{i,j,k, \mathbf{h}}^{1/p^e} p^{i/p^e}x^{j/p^e}y^{k/p^e}\mathbf{z}^{\mathbf{h}/p^e}$.
    Let $g \coloneqq x^{a/p^e}y^{b/p^e}+ x^{b/p^e}y^{a/p^e} + g'$ and observe that, since $f \in$ $(p^{p^e}, x^{p^e}, y^{p^e}, \mathbf{z}^{p^e})R$, we can deduce that $g \in (p, x,y,\mathbf{z})B$.
    Moreover, $(f^{1/p^e}-g)^{p^e} \in (p)B$, whence $(f^{1/p^e}-g) \in (p^{1/p^e})B$ by \autoref{l-lemma2.3}(i).
    Since we are assuming that $f^{1/p^e} \in (p,x,y, \mathbf{z})B$, and $B$ is big Cohen-Macaulay, we get that 
    \[
    (f^{1/p^e}-g) \in (p^{1/p^e})B \cap (p,x,y, \mathbf{z})B = (p, p^{1/p^e}x, p^{1/p^e}y, p^{1/p^e}\mathbf{z}).
    \]
    Therefore, there exist $\alpha, \beta, \gamma, \mathbf{d}$ in $B$ such that
    \begin{equation} \label{e-two sides}
        f^{1/p^e} = x^{a/p^e}y^{b/p^e}+ x^{b/p^e}y^{a/p^e} + g' + \alpha p^{1/p^e}x + \beta p^{1/p^e}y + \gamma p + \mathbf{d} p^{1/p^e} \mathbf{z}.
    \end{equation}
    Setting $I \coloneqq (p^{1+1/p^e}, x^{p^e}, y^{p^e}, \mathbf{z}^{1/p^e})B$,
    we have that $f \in I$. Indeed $x^ay^b+x^by^a \in I$ by our choices of $(a,b)$, while $f' \in (p^{p^e}, x^{p^e}, y^{p^e}, \mathbf{z}^{p^e})R \subseteq (p^{1+1/p^e}, x^{p^e}, y^{p^e}, \mathbf{z}^{1/p^e})B$.  For the remainder of the proof we split into two cases. First, suppose that $p^e < 3$, i.e. $p=2$ and $e=1$. Here $I \coloneqq (2^{3/2}, x^2, y^2, \mathbf{z}^{1/2})B$, and after squaring both sides of \autoref{e-two sides}, we claim that $2x^{3/2}y^{3/2} \in I$ regardless of the choice of $(a,b)$. 
    Indeed, the square of the left hand side is $f \in I$, which implies that the square of the right hand side is in $I$ as well.
    Consider the cross terms $2m_1m_2$ between two monomials $m_1$ and $m_2$: if either of the monomials is divisible by $2^{1/2}$, then $2m_1m_2 \in I$, whence $g^2 \in I$. Since, for every monomial in $g'$, either $i\neq 0$ or $\mathbf{h} \neq 0$, $g'^2 \in I$ and every cross term between a monomial in $g'$ and $x^{a/2}y^{b/2}$ or $x^{b/2}y^{a/2}$ is in $I$ as well.
    All in all, we conclude that $(x^{a/2}y^{b/2} + x^{b/2}y^{a/2})^2 \in I$, whence $2x^{(a+b)/2}y^{(a+b)/2} \in I$, which is the claim.
    
    Since $B$ is big Cohen-Macaulay and $(2, x,y)$ is a regular sequence, we deduce that $1 \in (2^{1/2}, x^{1/2}, y^{1/2}, \mathbf{z}^{1/2})B$, a contradiction.
    
     We now handle the case $p^e \geq 3$. As $f \in I$, by raising the two sides of \autoref{e-two sides} to the power $p^e$, we obtain that
     $$(x^{a/p^e}y^{b/p^e}+ x^{b/p^e}y^{a/p^e} + g' + \alpha p^{1/p^e}x + \beta p^{1/p^e}y + \gamma p + \mathbf{d} p^{1/p^e} \mathbf{z})^{p^e} \in I.$$
     Every cross term acquires a coefficient divisible by $p$, therefore, if the cross term involves at least a monomial divisible by $p^{1/p^e}$, it is automatically in $I$.
     Moreover, since we assume that every monomial in $g'$ has either $i \neq 0$ or $\mathbf{h} \neq 0$, $g'^{p^e} \in I$ and every cross term involving a monomial in $g'$ is automatically in $I$ as well.
     All in all, we have that $(x^{a/p^e}y^{b/p^e}+ x^{b/p^e}y^{a/p^e})^{p^e} \in I$.
     However,
    \[
    f_I \coloneqq (x^{a/p^e}y^{b/p^e}+ x^{b/p^e}y^{a/p^e})^{p^e} = \sum_{i=0}^{p^e} \binom{p^e}{i} x^{(ai+b(p^e-i))/p^e}y^{(bi+a(p^e-i))/p^e}.
    \]
    Consider the term for $i = p^{e-1}$.
    By \autoref{lem: binomialValuation}, we know that $v_p\binom{p^e}{p^{e-1}} = e - v_p(p^{e-1}) = e - (e - 1) = 1$. Thus, we have that $\binom{p^e}{p^{e-1}} = p u$, where $u \in W(k)$ is a unit.
    Moreover, as $p^e \geq 3$, both $$(ai+b(p^e-i))/p^e < p^e \; \text{ and }\; (bi+a(p^e-i))/p^e < p^e.$$
    Let $S \coloneqq R\llbracket p^{1/p^e},x^{1/p^e}, y^{1/p^e}, \mathbf{z}^{1/p^e}\rrbracket $ and set $s \coloneqq x^{1/p^e}, t \coloneqq  y^{1/p^e}$.
    Then $I \cap S= (p^{1+1/p^e}, s^{p^{2e}}, t^{p^{2e}}, \mathbf{z}^{1/p^e})$ and $f_I=\sum_{i=0}^{p^e} \binom{p^e}{i} s^{(ai+b(p^e-i))}t^{(bi+a(p^e-i))} \in I \cap S$.
    However there is at least a monomial in $f_I$---the one corresponding to $i = p^{e-1}$---whose terms in $s$ and $t$ have degree $<p^{2e}$ and such that the coefficient is divisible by $p$ and not $p^2$, a contradiction.
\end{proof}
\begin{remark}
\label{cor.EasyExtremalBound}
As a consequence of \autoref{thm:lowerbound}, let $f = p^{p^{e} + 1} + x_2^{p^e + 1} + \dots + x_n^{p^e + 1}$ or $f = p^{p^{e} + 1} + x_2^{p^e}x_3 + x_3^{p^e}x_2 + x_4^{p^e + 1} + \dots + x_n^{p^e + 1}$ in $W(k)\llbracket x_2, \dots, x_n\rrbracket$ for $n \geq 3$. These are both extremal singularities mod $p$, but $\ppt(f) > \fpt(\ov{f}) = \frac{1}{p^e}$ by \autoref{thm:lowerbound}. 
\end{remark}

\subsection{Elliptic curves}

\cite[\S 5]{CaiPandeQuinlanGallegoSchwedeTuckerpluspurethresholdscusplikesingularities} raises the question of computing plus-pure threshold of diagonal elliptic curves. As mentioned above, the question is answered for $f=x^3+y^3+z^3$ recently by Yoshikawa (\cite[Example 6.10]{yoshikawa2025computationmethodperfectoidpurity}) in characteristic $p \equiv_3 2$ over $W(k)$ while one obtains $1-{1 \over p}$ after appropriately ramifying the DVR as we saw in \autoref{yoshikawa}.  We consider the related example $f=x^3+y^3+p^3$ when the associated elliptic curve is supersingular.

\begin{theorem}\label{thm:ppt_elliptic}
Let $k$ be a perfect field of characteristic $p$ with $p \equiv_3 2$. Let $R \coloneqq W(k)\llbracket x, y\rrbracket $ with maximal ideal $\mathfrak{m}$ and $f = h(x,y) + p^3 \in R$, where $h(x,y) = xy(ux+vy)$ for some $u,v \in R$.  
    Then $$\ppt(f) \leq 1- 1/p^2.$$  Similarly, $\ppt(x^3+y^3+p^3 \in W(k)\llbracket x,y\rrbracket) \leq 1-1/p^2$.
\end{theorem}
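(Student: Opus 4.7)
The plan is to reduce the inequality $\ppt(f) \leq 1 - 1/p^2$ to the single explicit containment
\[
    f^{p^2-1} \in (p^{p^2}, x^{p^2}, y^{p^2})\,R,
\]
and then appeal to \autoref{l-lemma2.3}(ii). Since $(p^{p^2}, x^{p^2}, y^{p^2}) \subseteq (p, x^{p^2}, y^{p^2})$, applying \autoref{l-lemma2.3}(ii) with $\varpi = p$, $\epsilon = 1$, $e = 2$ yields $f^{(p^2-1)/p^2} \in (p^{1/p^2}, x, y)R^+$, placing $f^{(p^2-1)/p^2}$ in the witnessing ideal and delivering the desired bound via the characterization of $\ppt$ in \autoref{lem.AlternateDescriptionPPT}.

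To establish the key containment, set $k \coloneqq (p^2-1)/3$, an integer since $p \equiv_3 2$, and expand binomially:
\[
    f^{p^2-1} = \sum_{j=0}^{p^2-1} \binom{p^2-1}{j}\, h^{p^2-1-j}\, p^{3j}.
\]
Writing $h^m = x^m y^m(ux+vy)^m$, each monomial of $h^m$ has the shape $\binom{m}{i}\, u^i v^{m-i} x^{m+i} y^{2m-i}$, and an elementary degree check shows $h^m \in (x^{p^2}, y^{p^2})$ precisely when $m \geq 2k+1$, i.e.\ when $j \leq k-1$. At the opposite extreme, for $j \geq k+1$ one has $3j \geq 3k+3 > p^2$, so $p^{3j} \in (p^{p^2})$ and the whole $j$-th summand already sits in the target ideal. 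This leaves only the borderline $j = k$, where $h^{2k}$ harbors a single monomial escaping $(x^{p^2}, y^{p^2})$, namely $\binom{2k}{k}\, u^k v^k x^{3k} y^{3k}$.

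The crucial combinatorial input for the $j=k$ case is the divisibility $p \mid \binom{2k}{k}$, recorded in \autoref{lem: LucasThmCor} for $p > 3$ and verified by hand when $p=2$ via $\binom{2}{1}=2$. Combined with the factor $p^{3k} = p^{p^2-1}$ already present in the expansion, this places the bad monomial's contribution in $(p^{3k+1}) = (p^{p^2})$, completing the containment. The delicate step will be precisely this $j = k$ borderline: neither the degree estimate nor the $p$-adic estimate on $p^{3j}$ alone is sufficient, and only the marriage of the two through the central-binomial divisibility carries the argument. The hypothesis $p \equiv_3 2$ is used twice here — to make $k$ an integer and to invoke \autoref{lem: LucasThmCor}.

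For the second claim concerning $x^3+y^3+p^3$, pass to the étale extension $W(k')\llbracket x,y\rrbracket$ (using \autoref{lem.EtaleExtensionPPT} to preserve $\ppt$), where $k' \supseteq k$ contains a primitive cube root of unity $\zeta$; such a $k'$ exists because $\bF_{p^2}$ contains $\zeta$ whenever $p \equiv_3 2$. Factor $x^3+y^3 = (x+y)(x+\zeta y)(x+\zeta^2 y)$ and apply the invertible linear change of variables $X = x+y$, $Y = x + \zeta y$, whose Jacobian $\zeta - 1$ is a unit in $W(k')$. In the new coordinates, $x^3+y^3 = XY(uX + vY)$ with $u = -\zeta$ and $v = -\zeta^2$, so the first part of the theorem applies and closes out the proof.
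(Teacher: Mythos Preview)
Your argument has a genuine gap at the final step. You correctly establish the containment $f^{p^2-1}\in(p^{p^2},x^{p^2},y^{p^2})R$ (the combinatorics leading to it, including the appeal to \autoref{lem: LucasThmCor}, are fine), and you correctly apply \autoref{l-lemma2.3}(ii) after weakening to $(p,x^{p^2},y^{p^2})$ to obtain $f^{(p^2-1)/p^2}\in(p^{1/p^2},x,y)R^+$. But this last ideal is \emph{not} contained in $\fm R^+=(p,x,y)R^+$: indeed $p^{1/p^2}\notin\fm R^+$, as one sees by choosing a prime of $R^+$ lying over $(x,y)\subset R$ and reducing to an integral extension of $W(k)$, where the image of $p^{1/p^2}$ has $p$-adic valuation $1/p^2<1$. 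So your membership only witnesses $\ppt\big(f\in W(k)[p^{1/p^2}]\llbracket x,y\rrbracket\big)\le 1-1/p^2$, which is the weaker statement (since $\fm R^+\subseteq (p^{1/p^2},x,y)R^+$, the supremum defining $\ppt$ over the ramified ring is at most that over $R$).

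The obstruction is built into \autoref{l-lemma2.3}: part (ii) only allows exponent $\epsilon\le1$ on $\varpi$, and part (i), which tolerates $\epsilon\le p/(p-1)$, handles only two generators. Passing from $(p^{p^2},x^{p^2},y^{p^2})$ to $(p,x,y)$ under $p^2$-th roots would require $\epsilon=p^2$, far outside either range, and no iteration of the lemma seems to bridge the gap with three generators. The paper circumvents this by factoring $f=\prod_i\big(h^{1/3}+\zeta_i p\big)$ inside $R^+$, bounding each factor via \autoref{l-lemma2.3}, and then using that $p,x,y$ is a regular sequence on $R^+$ to treat the two borderline generators of the resulting power ideal individually; the same divisibility $p\mid\binom{2k}{k}$ you isolated reappears there, but in service of a different reduction. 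Your treatment of $x^3+y^3+p^3$ via the \'etale base change and linear change of variables matches the paper's and is fine once the first part is repaired.
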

\begin{proof}
It is sufficient to show that $f^{1 - 1/p^2} \in (p,x,y)R^+$. We write
    $$f =  (h(x,y)^{1/3})^3 + p^3 = \prod_{i=1}^3\left(h(x,y)^{1/3} + \zeta_i \cdot p\right)$$
    for $\zeta_i$ a certain sixth root of unity. We define $g_i \coloneqq h(x,y)^{1/3} + \zeta_i \cdot p$. It's immediately clear that for each $i$,
    $$g_i \in \left(h(x,y)^{1/3},p\right)R^+.$$
    Thus, by \autoref{l-lemma2.3}
    $$g_i^{1/p^2} \in \left(h(x,y)^{1/3p^2},p^{1/p^2}\right)R^+.$$
    Considering the product, 
    $$f^{\frac{p^2 - 1}{p^2}} = \left(g_1g_2g_3\right)^{\frac{p^2 - 1}{p^2}} \in \left(h(x,y)^{1/3p^2},p^{1/p^2}\right)^{3(p^2 - 1)}R^+.$$
    It is thus sufficient to show that $\left(h(x,y)^{1/3p^2},p^{1/p^2}\right)^{3(p^2 - 1)}R^+ \subset (p,x,y)R^+$. Indeed if we expand the ideal product, we see that
    $$\left(h(x,y)^{1/3p^2},p^{1/p^2}\right)^{3(p^2 - 1)} = \left(h(x,y)^{\alpha/3p^2}p^{\beta/p^2} \ \bigg| \ \alpha + \beta = 3(p^2 - 1)\right).$$
    If $\beta \geq p^2$, $p^{\beta/p^2} \in (p,x,y)R^+$. By blowing up the ideal $(x,y)$, and using that $h(x,y) \in (x,y)^3$, we see that $h(x,y)$ has an lct of at most $2/3$. Thus, as $\ppt$ is bounded above by the lct, if $\alpha/3p^2 \geq 2/3$, $h(x,y)^{\alpha/3p^2} \in (p,x,y)R^+$. As $\alpha + \beta = 3(p^2 - 1)$, this bound is equivalent to enforcing that $\beta \leq p^2 - 3$. This leaves only two generators for which we need  to check the inclusion: when the pair $(\alpha,\beta)$ is of the form $(2p^2 - 2, p^2 - 1)$ [case (1)] or $(2p^2 - 1, p^2 - 2)$ [case (2)]. 
    \begin{enumerate}[label = (\arabic*)]
        \item It is sufficient to show that 
        \[ 
            (xy(ux + vy))^{\frac{2p^2 - 2}{3p^2}} \cdot p^{\frac{p^2 - 1}{p^2}} \in \left(p,x,y\right)R^+.
        \]
        Using the fact that $p,x,y$ forms a regular sequence on $R^+$, this is equivalent to checking that
        $$\left(ux + vy\right)^{\frac{2p^2 - 2}{3p^2}} \in \left(p^{1/p^2},x^{\frac{p^2 + 2}{3p^2}}, y^{\frac{p^2 + 2}{3p^2}}\right)R^+.$$
        By \autoref{l-lemma2.3}(ii) we can clear $p^2$ from the denominators in our exponents, and thus it is sufficient to show that 
        $$\left(ux + vy\right)^{\frac{2p^2 - 2}{3}} \in \left(p,x^{\frac{p^2 + 2}{3}}, y^{\frac{p^2 + 2}{3}}\right)R^+.$$
        We note that for any prime $p$, $p^2 \equiv_3 1$, and thus, all powers above are integer powers. Thus we can take the binomial expansion of the polynomial on the left hand side:
        $$\left(ux + vy\right)^{\frac{2p^2 - 2}{3}} = \sum_{a + b = \frac{2p^2 - 2}{3}} \binom{a+b}{b} u^a v^b x^ay^b.$$
        We note that if $a + b = \frac{2p^2 - 2}{3}$, the only choice of integers $a$ and $b$ for which $a,b < \frac{p^2 + 2}{3}$ is precisely when $a = b = \frac{p^2 - 1}{3}$. It follows then that all choices of $a,b$ outside of this lead to $x^ay^b \in \left(p,x^{\frac{p^2 + 2}{3}}, y^{\frac{p^2 + 2}{3}}\right)$ as desired. Thus it is sufficient to check that, for $a=b = \frac{p^2 - 1}{3}$, the binomial coefficient of the last remaining monomial, $\binom{2a}{a}$, is divisible by $p$. As it turns out, this is true precisely when $p \equiv_3 2$; see \autoref{lem: LucasThmCor}. We note that the combinatorial identity is to be expected, as when $p \equiv_3 1$, the given elliptic curve is ordinary and thus has $\ppt = 1$. 

        \item It is sufficient to show that 
         \[
         (xy(ux + vy))^{\frac{2p^2 - 1}{3p^2}} \cdot p^{\frac{p^2 - 2}{p^2}} \in \left(p,x,y\right)R^+.
         \]
         We proceed similarly to the previous case. Using the fact that $p,x,y$ forms a regular sequence on $R^+$, this is equivalent to checking that
        \[
        \left(ux + vy\right)^{\frac{2p^2 - 1}{3p^2}} \in \left(p^{2/p^2},x^{\frac{p^2 + 1}{3p^2}}, y^{\frac{p^2 + 1}{3p^2}}\right)R^+.
        \]
        By \autoref{l-lemma2.3}(ii), this is equivalent to checking that
         \[
        \left(ux + vy\right)^{\frac{2p^2 - 1}{3p}} \in \left(p^{2/p},x^{\frac{p^2 + 1}{3p}}, y^{\frac{p^2 + 1}{3p}}\right)R^+.
        \]
        Since $p \equiv_3 2$, by \autoref{l-magic}, the floor of $\frac{2p^2 - 1}{3p}$ is $\frac{2p-1}{3}$. Therefore, if we prove that
        \[
        \left(ux + vy\right)^{\frac{2p - 1}{3}} \in \left(p^{2/p},x^{\frac{p^2 + 1}{3p}}, y^{\frac{p^2 + 1}{3p}}\right)R^+,
        \]
        then we are done. Note that $\left(ux + vy\right)^{\frac{2p - 1}{3}} = \sum_{i+j = \frac{2p - 1}{3}} c_{i,j} u^iv^jx^iy^j$ for some $c_{i, j} \in \ZZ_{>0}$. Clearly, all the monomials in the sum have either $i$ or $j$ $\geq (2p-1)/6$. Since $p \equiv_3 2$  and $i$ and $j$ are integers, the ceiling of $(2p-1)/6$ is $(p+1)/3$, which is always $\geq \frac{p^2 + 1}{3p}$. Therefore all the monomials in the sum indeed lie in the ideal$\left(p^{2/p},x^{\frac{p^2 + 1}{3p}}, y^{\frac{p^2 + 1}{3p}}\right)$.         
    \end{enumerate}
    This finishes the proof for the equation $f=h(x,y)+p^3$.

   As for the equation $f=x^3+y^3+p^3$, since $p \equiv_3 2$, there exists an \'etale extension $R' = W(k')\llbracket x,y\rrbracket \supseteq W(k)\llbracket x,y\rrbracket$ containing third roots of unity.  By \autoref{lem.EtaleExtensionPPT} we see that $\ppt\big(f \in R'\big) = \ppt\big(f \in R\big)$.  Hence we may assume that $R := R'$ contains a third root of unity $\xi$.  
    
    Now, $p^3 + x^3 + y^3 = p^3 + (x+y)(x+\xi y)(x + \xi^2 y)$.  Consider the automorphism $\phi : R \to R$ sending $x \mapsto x+y$ and $y \mapsto x + \xi y$ (this is an isomorphism as $p \neq 3$).  We see that $\phi^{-1}(p^3+x^3+y^3) = p^3 + xy(-\xi x + (\xi + 1)y)$.  In particular, as the $\ppt$ of the right side is $\leq 1-1/p^2$, we see that 
    \[
        \ppt(p^3+x^3+y^3) \leq 1-1/p^2
    \] 
    as well.
\end{proof}

\begin{remark}
    The automorphism argument at the end applies to many other equations as well.  It perhaps is worth noting that all the elliptic curves defined by equations of the form $z^3 + xy(x+\lambda y) \in \overline{k}\llbracket x,y,z\rrbracket$ are all isomorphic for any nonzero $\lambda \in k$.  Indeed after replacing $y$ by $\lambda y$, one gets the equation $z^3 + {1 \over \lambda}xy(x+y)$ which defines the same variety as $\lambda z^3 + xy(x+y)$.  But then $\lambda$ can be absorbed into $z$.  It would be interesting to study the $\ppt$ of expressions of the form
    \[  
        py^2 + x(x+y)(x-\lambda y).
    \]
\end{remark}

We note that the $2$-adic version of the diagonal elliptic curve gives an explicit example of a polynomial for which the plus-pure threshold differs from the log canonical threshold as well as the corresponding $F$-pure threshold. This answers the generalization of \cite[Question 5.2]{CaiPandeQuinlanGallegoSchwedeTuckerpluspurethresholdscusplikesingularities} immediately following it in characteristic $2$. 

\begin{remark} \label{rmk: 2adic elliptic curve PPT bounds}
 By \autoref{thm:lowerbound} and \autoref{thm:ppt_elliptic}, we get that for $f=x^3+y^3+2^3 \in \ZZ_2\llbracket x,y \rrbracket$ and $f_0 = x^3+y^3+z^3 \in \FF_2\llbracket x,y,z\rrbracket$,
 $$\fpt(f_0)=1/2<\ppt(f)\leq 3/4 < 1=\lct(f).$$
\end{remark}
Though expected, it is unknown to the authors whether such bounds hold for any $p > 2$ such that $p \equiv_3 2$.
\begin{question} Let $k$ be a perfect field of characteristic $p \equiv_3 2$. Let $f = x^3 + y^3 + p^3 \in W(k) \llbracket x,y \rrbracket$ and $f_0 = x^3 + y^3 + z^3 \in k\llbracket x,y,z\rrbracket$. Is it true that
   $$\fpt(f_0) = 1 - \frac{1}{p} < \ppt(f) \leq 1 - \frac{1}{p^2}?$$
Note that it suffices to show the inequality $1-\frac{1}{p} < \ppt(f)$.
\end{question}

\begin{remark}
\label{rem.PTimesJumpingNumber}
Consider $f = p^3 + x^3+y^3 \in W(k)\llbracket x,y \rrbracket = R$ when $p = 2$.  By \autoref{thm:lowerbound} and \autoref{thm:ppt_elliptic}
we see that $\ppt(f) \in \left(\frac{1}{2},\frac{3}{4}\right]$, so $p \cdot (\ppt(f)) = 2 \cdot (\ppt(f)) \in \left(1,\frac{3}{2}\right]$. Suppose for a contradiction that $p \cdot (\ppt(f))$ were a jumping number of the associated $+$-test ideal.  
In that case, we would have for $1 \gg \epsilon > 0$ that 
\[
    \tau_+(f^{p \cdot \ppt(f) + \epsilon}) \neq \tau_+(f^{p \cdot \ppt(f) - \epsilon})
\]
where here $\tau_+$ denotes the test ideal of \cite{MaSchwedeSingularitiesMixedCharBCM} associated to the perfectoid BCM algebra $\widehat{R^+}$ (see also \cite{BMPSTWW2} for comparisons with other theories).
Now, for any rational number $t$, write $t = \lfloor t \rfloor + \{t \}$, with $\{t \}$ the fractional part.  Since $\tau_+(f^t) = f^{\lfloor t \rfloor} \tau_+(f^{\{t\}})$, we immediately see the fractional part of $p \cdot (\ppt(f))$ is a jumping number as well.  
But $\{ p \cdot \ppt (f)\} \in \left(0,\frac{1}{2}\right]$.  However, the first jumping number, $\ppt(f)$,  is strictly greater than $\frac{1}{2}$, a contradiction.

This shows that the analog of \cite[Lemma 3.1(1)]{BlickleMustataSmithDiscretenessAndRationalityOfFThresholds} \textit{fails} in mixed characteristic; in particular, $p$ times a jumping number need not be a jumping number.   This is particularly concerning since this property plays a key role in proving the rationality (and sometimes discreteness) of the $F$-jumping numbers and in particular, in proving the rationality of the $F$-pure threshold.
\end{remark}

\subsection{Non-reduced modulo 
$p$ reduction}
The following result---in the particular case of $(p,n,e) = (3,1,1)$ ---partially answers \cite[Question 5.1]{CaiPandeQuinlanGallegoSchwedeTuckerpluspurethresholdscusplikesingularities}. It also shows that \cite[Proposition 4.6]{CaiPandeQuinlanGallegoSchwedeTuckerpluspurethresholdscusplikesingularities} (with $a=p$ in its notation) does not extend for any exponent of an odd prime and in any dimension.

\begin{theorem}\label{TuckersBane}
    Let $k$ be a perfect field of characteristic $p>2$. Let $f = p^{p^e}+ x_2^{p^e} + \dots + x_n^{p^e} \in R\coloneqq W(k)\llbracket x_2, \dots, x_n\rrbracket $ and $f_0=x_1^{p^e} + \dots + x_n^{p^e} \in R_0\coloneqq k\llbracket x_1, \ldots, x_n\rrbracket $.
    Then $f^{1/p^e} \notin (p, x_2, \dots, x_n)B$ for any BCM $R^+$-algebra $B$.
    In particular, $\ppt(f) > \fpt(f_0) = \frac{1}{p^e}$.
\end{theorem}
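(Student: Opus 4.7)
The plan is to argue by contradiction. Suppose some BCM $R^+$-algebra $B$ satisfies $y := f^{1/p^e} \in (p, x_2, \dots, x_n)B$, and set $\pi := p^{1/p^e}$. The starting observation is that $f$ is almost a $p^e$-th power: with $g := p + x_2 + \cdots + x_n$, Kummer's theorem forces every non-pure multinomial coefficient in the expansion $(p + \sum x_i)^{p^e}$ to be divisible by $p$, so $g^{p^e} - f \in pR$. Applying Frobenius modulo $p$ gives $(y - g)^{p^e} \equiv 0 \pmod{pB}$, and \autoref{l-lemma2.3}(i) upgrades this to $y - g \in \pi B$. Combining with the purported containment and the regular-sequence property of $\pi, x_2, \dots, x_n$ on the BCM algebra $B$, I normalize to
\[
    y = p(1 + \alpha) + \sum_{i=2}^{n} x_i(1 + \pi c_i)
\]
for some $\alpha, c_i \in B$; the rest of the proof extracts two inconsistent constraints on $\alpha$.

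A first ``universal'' constraint works for every $n \geq 2$: reducing $y^{p^e} = f$ modulo $(x_2, \dots, x_n)B$ collapses all $x_i$-terms and leaves $p^{p^e}(1+\alpha)^{p^e} \equiv p^{p^e}$; since $p$ is a non-zero-divisor on $B/(x_2, \dots, x_n)B$, we obtain $(1+\alpha)^{p^e} \equiv 1$, and since $(1+\alpha)^{p^e} - 1 \equiv \alpha^{p^e} \pmod{p}$, \autoref{l-lemma2.3}(ii) places $\alpha \in (\pi, x_2^{1/p^e}, \dots, x_n^{1/p^e})B \subseteq \m_B$. For the contradictory second constraint, I work modulo a finer ideal. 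For $n \geq 3$, take $I := (p^{1+1/p^e}, x_2^{p^e}, \dots, x_n^{p^e})B$; every cross-term of the multinomial expansion of $y^{p^e}$ lies in $I$ except the ``dangerous'' ones with $\mu_0 = 0$ and $v_p(\binom{p^e}{\mu}) = 1$ (which do exist by Kummer, e.g. $\mu = (0, p^{e-1}, (p-1)p^{e-1}, 0, \dots, 0)$ via \autoref{lem: binomialValuation}). Collecting these terms, using $(1 + \pi c_i)^{\mu_i} \equiv 1 \pmod \pi$, and cancelling a factor of the non-zero-divisor $p$, I obtain
\[
    P \in (\pi, x_2^{p^e}, \dots, x_n^{p^e})B,
\]
where $P := \sum_{\mu} u_\mu \prod x_i^{\mu_i} \in R$ has unit coefficients $u_\mu = \binom{p^e}{\mu}/p \in W(k)^\times$ and monomials of degree strictly less than $p^e$ in each variable. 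For $n = 2$, no such dangerous indices exist, so I instead take $I := (p^{e+2}, x_2^{p^e})B$; since $p > 2$, only the cross-term with $\mu_0 = 1$ escapes $I$, giving $p^{e+1}(1+\alpha)x_2^{p^e-1}(1+\pi c_2)^{p^e-1} \in I$, and — via successive applications of the regular-sequence property together with the invertibility of $1+\pi c_2$ in $B$ — this forces $1+\alpha \in (p, x_2)B$, making $\alpha$ a unit and directly contradicting the universal constraint.

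For $n \geq 3$ the contradiction is completed by showing $P \notin (\pi, x_2^{p^e}, \dots, x_n^{p^e})B$. The sequence $\pi, x_2^{p^e}, \dots, x_n^{p^e}$ is regular both on the intermediate regular local ring $S := R[\pi] = R[T]/(T^{p^e} - p)$ (a free $R$-module with basis $\{1, \pi, \dots, \pi^{p^e-1}\}$) and on the BCM algebra $B$, so the standard cyclic-purity/colon-capture property transfers the containment to $P \in (\pi, x_2^{p^e}, \dots, x_n^{p^e})S$; expanding in the $R$-basis of $S$ and matching the $\pi^0$-coefficient (using $\pi \cdot \pi^{p^e - 1} = p$) yields $P \in (p, x_2^{p^e}, \dots, x_n^{p^e})R$. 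Reducing modulo $p$ forces $\bar P \in (x_2^{p^e}, \dots, x_n^{p^e})k[x_2, \dots, x_n]$, but $\bar P$ is a nonzero polynomial whose monomials all have $x_i$-degree strictly less than $p^e$ — contradiction. Finally, the strict inequality $\ppt(f) > 1/p^e$ follows from the proven non-containment for every BCM $B$: this shows the $+$-test ideal $\tau_+(f^{1/p^e}) = R$, so $\ppt(f)$, being the first jumping number of $\tau_+$, must exceed $1/p^e$. The main obstacle will be justifying the colon-capture step that transfers $P$ from $B$ back down to $S$; the $n = 2$ bifurcation is a cosmetic complication rather than a conceptual one.
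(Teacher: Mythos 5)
Your normalization step and the use of \autoref{l-lemma2.3} track the paper exactly, but from there you and the paper diverge. The paper's key move is to include $x_3, \dots, x_n$ to the \emph{first} power in its auxiliary ideal $I = (p^{e+1+1/p^e}, x_2^{p^e}, x_3, \dots, x_n)B$; this collapses the $p^e$-th power to a two-variable binomial expansion in $x_2$ and $p$, so there is no dichotomy in $n$. The paper then shows the only surviving binomial term is $p^{e+1}x_2^{p^e-1}(b+1)$, colon-captures to get $b+1 \in (p^{1/p^e}, x_2, \dots, x_n)B$, and plugs \emph{that} refinement back into the identity for $f^{1/p^e}$ before reducing modulo $(x_2, \dots, x_n)B$ — the net effect is that $f^{1/p^e}$ acquires a $p^{1+1/p^e}$ factor there, and the final $p^e$-th power gives $1 = p\alpha^{p^e}$, hence $1 \in (p, x_2, \dots, x_n)B$, an immediate contradiction with $B$ being BCM. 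You instead keep all the variables (when $n \geq 3$) and isolate a polynomial $P$ built from the dangerous multinomial cross-terms, then try to push $P \in (\pi, x_2^{p^e}, \dots, x_n^{p^e})B$ down through the ramified ring $S = R[\pi]$ to $R$. This is a genuinely different route and it works in outline, but the paper's ``reduce modulo part of the system of parameters, refine, plug back, reduce again'' pattern is more uniform: it never needs to pass through an intermediate ramified local ring, and it needs no case split in $n$.

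Two gaps, one of which you flag. (1) The $B \to S$ descent for $n \geq 3$ requires that $S \to B$ be cyclically pure (e.g., faithfully flat), which needs $B$ to be balanced BCM over the ramified ring $S$ — this does not formally follow from $B$ being a BCM $R^+$-algebra and would need to be established (for instance by taking $B$ perfectoid). You can actually sidestep $S$ entirely: from $P \in (\pi, x_2^{p^e}, \dots, x_n^{p^e})B$, reduce modulo $(x_2^{p^e}, \dots, x_n^{p^e})B$ to land in $(\pi)$, raise to the $p^e$-th power so that $\pi^{p^e} = p$ appears and the cross-terms of $(\sum b_i x_i^{p^e})^{p^e}$ acquire $p$-divisible coefficients; this yields $P^{p^e} \in (p, x_2^{p^{2e}}, \dots, x_n^{p^{2e}})B$, and ordinary faithful flatness of $R \to B$ (the standard fact for a balanced BCM algebra over a complete regular local ring) pushes this to $R$. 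Since $\overline{P}^{p^e} = \sum \overline{u}_\mu^{p^e}\prod x_i^{\mu_i p^e}$ has all exponents $< p^{2e}$, this forces $\overline{P} = 0$, a contradiction. (2) In the $n=2$ branch, $1 + \pi c_2$ need not be invertible in $B$ (which is not local, so $\fm_B$ is not defined either). The fix: multiply $(1+\pi c_2)^{p^e-1}(1+\alpha) \in (p, x_2)B$ by one more $(1+\pi c_2)$, note $(1+\pi c_2)^{p^e} \in 1 + pB$ because all intermediate binomial coefficients are $p$-divisible and $\pi^{p^e}=p$, and so $1 + \alpha \in (p, x_2)B$; combined with $(1+\alpha)^{p^e} - 1 \in (x_2)B$ from your first computation, this gives $1 \in (p, x_2)B$. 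With these repairs, your proposal is correct.
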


Compare with \autoref{cor.PthPowersRamified} and \autoref{l-homogeneoushyp_ramification} for the case of a ramified DVR.  
\begin{proof}
Since $f_0 = (x_1+\ldots +x_n)^{p^e}$ is a product of $p^e$ linear forms, it is clear that $\fpt(f_0)=\frac{1}{p^e}$. To establish the assertion on the plus-pure threshold, we proceed by contradiction. 

Suppose that the assertion is false. Then $f^{1/p^e} \in (p,x_2, \dots, x_n)B$, where $B$ is a BCM $R^+$-algebra. Let $g \coloneqq \sum_{i=2}^n x_i + p \in (p,x_2, \dots, x_n)B$. Clearly $(f^{1/p^e} - g)^{p^e} \in (p)B$. So 
    $$f^{1/p^e} - g \in (p^{1/p^e})B \cap (p,x_2, \dots, x_n)B = (p,p^{1/p^e}x_2, \dots, p^{1/p^e}x_n)B.$$
    Therefore, there exist $a_2, \dots, a_n,b\in B$ such that 
    $$f^{1/p^e} =  \sum_{i=2}^n a_ip^{1/p^e}x_i + bp + g =  \sum_{i=2}^n x_i(a_ip^{1/p^e} + 1) +p(b + 1).$$
    We now take the $p^e$-th power on both sides modulo the ideal $$I \coloneqq (p^{e + 1 + 1/p^e},x_2^{p^e}, x_3, \dots, x_n)B.$$ 
    Note that the left hand side is $(f^{1/p^e})^{p^e} = f \equiv_I 0$ since $p>2$. Let $$h \coloneqq (\sum_{i=2}^n x_i(a_ip^{1/p^e} + 1) +p(b + 1))^{p^e}.$$
    Since $x_3, \dots, x_n \in I$, we have that $h \equiv_I (x_2(a_2 p^{1/p^e} + 1) +p(b + 1))^{p^e}$. Expanding the binomial, we have that 
    $$ h \equiv_I \sum_{i = 0}^{p^e} \binom{p^e}{i}p^i x_2^{p^e-i}(p^{1/p^e}a_2+1)^{p^e-i}(b+1)^i.$$
Notice that $\binom{p^e}{i}p^i x_2^{p^e-i}(p^{1/p^e}a_2+1)^{p^e-i}(b+1)^i$ has a factor of $p^{e+2}$ for $i \geq e + 2$. Hence

    $$h \equiv_I \sum_{i = 0}^{e + 1} \binom{p^e}{i}p^i x_2^{p^e-i}(p^{1/p^e}a_2+1)^{p^e-i}(b+1)^i.$$

    The first term of $h$ vanishes modulo $I$ since it has a factor of $x_2^{p^e}$. The $(e + 1)$-th term also vanishes modulo $I$ since it has a factor of $\binom{p^e}{e+1}p^{e+1}$ and $p$ divides $\binom{p^e}{e+1}$. Thus, 

    $$h \equiv_I \sum_{i = 1}^{e} \binom{p^e}{i}p^i x_2^{p^e-i}(p^{1/p^e}a_2+1)^{p^e-i}(b+1)^i.$$

    Now we show that the terms $\binom{p^e}{i}p^i x_2^{p^e-i}(p^{1/p^e}a_2+1)^{p^e-i}(b+1)^i$ for $2 \leq i \leq e$ vanish modulo $I$. To show this, we prove that $p^{e + 2 - i}$ divides $\binom{p^e}{i}$. Thus, it is enough to show that $e + 2 - i \leq v_p\binom{p^e}{i}$. Since $v_p\binom{p^e}{i} = e - v_p(i)$ by \autoref{lem: binomialValuation}, we proceed to prove that $v_p(i) \leq  i - 2$ for all $i \geq 2$.

    Observe that for $n \geq 1$, we have that $v_p(p^n) = n \leq p^n - 2$, since $p > 2$. Now let $i \geq 2$. We have that $i \in [2, p)$ or $i \in [p^c, p^{c+1})$ for some $c \geq 1$. If $i \in [2, p)$, then $0 = v_p(i) \leq i -2$ so that this term lies in $I$. Similarly, if $i \in [p^c, p^{c+1})$ for some $c \geq 1$, then $v_p(i) \leq c$. Furthermore, since $v_p(p^c) = c$, we have that $c \leq p^c - 2$. Finally, $p^c - 2 \leq i - 2$ since $p^c \leq i$. From these three inequalities we conclude that $v_p(i) \leq i - 2$. Thus $v_p(i) \leq  i - 2$ for all $i \geq 2$ and therefore all terms of $h$ other than the first term lie in $I$. So, we get that

    $$h \equiv_I p^{e+1}x_2^{p^e-1}(p^{1/p^e}a_2+1)^{p^e-1}(b+1) \equiv_I p^{e+1}x_2^{p^e-1}(b+1).$$ Therefore
    \[p^{e+1}x_2^{p^e-1}(b+1) \in I = (p^{e + 1 + 1/p^e},x_2^{p^e}, x_3, \dots, x_n)B.\]
    Since B is a BCM $R^+$-algebra, we get
    $$b+1 \in (p^{1/p^e}, x_2, \dots, x_n)B.$$
    So, we can write
    $$b+1 =p^{1/p^e}\alpha +\sum_{i=2}^n x_i\beta_i \quad \text{for some }\alpha,\beta_2, \dots, \beta_n \in B.$$
    Plugging this in the expression for $f^{1/p^e}$, we get 
     $$f^{1/p^e} = \sum_{i=2}^n x_i(a_ip^{1/p^e} + 1) +p(p^{1/p^e}\alpha +\sum_{i=2}^n x_i\beta_i).$$
    Now we take the $p^e$-th power on both sides modulo the ideal $(x_2, \dots, x_n)B$ to get that
    \begin{align*}
    p^{p^e} &= p^{p^e+1}\alpha^{p^e} &  \in B/(x_2, \dots, x_n)B, \\
    \implies 1 &= p\alpha^{p^e} &  \in B/(x_2, \dots, x_n)B,\\
    \implies 1 &= 0 &  \in B/(p,x_2, \dots, x_n)B.
    \end{align*}
   But then $B=(p,x_2, \dots, x_n)B$, contradicting the assumption that $B$ is a BCM $R^+$-algebra. 
\end{proof}

\begin{remark}
    \autoref{yoshikawa} shows that the plus-pure threshold can vary significantly depending on the coefficient DVR. A counterpart of this phenomenon involving ``$p$-terms" is as follows: we have $$\ppt(f=x^3+3^3\in \ZZ_3\llbracket x\rrbracket)>1/3$$ by \autoref{TuckersBane}. Note that the DVR $\ZZ_3[\zeta]$, for $\zeta$ a primitive $3$-rd root of unity, has uniformizer $\varpi=\zeta-1$. We claim that the ppt of the analogous form $$\ppt(f'=x^3+\varpi^3\in \ZZ_3[\zeta]\llbracket x\rrbracket)=1/3.$$ This is because $\mathrm{ord}_{\varpi}(p)=2$, so \autoref{rem:explicit} and \autoref{thm:intermediate} yield $$1/3=\fpt(\overline{f'})\leq \ppt(f')\leq 1/3.$$
\end{remark}

\bibliographystyle{skalpha}
\bibliography{MainBib}
\end{document}